\newtheorem{theorem}{Theorem}[section]
\newtheorem{proposition}{Proposition}[section]
\newtheorem{lemma}{Lemma}[section]
\theoremstyle{Definition}
\newtheorem{definition}{Definition}[section]
\def\R{\mathbb{R}}
\def\E{\mathcal E}
\def \p{\partial}
\def \e {\varepsilon}
\def \I {\mathcal I}
\def \O {\Omega}
\DeclareMathOperator{\cof}{cof}
\DeclareMathOperator{\dist}{dist}
\DeclareMathOperator{\dive}{div}
\DeclareMathOperator{\Det}{Det}
\DeclareMathOperator{\supp}{supp}
\DeclareMathOperator{\tr}{tr}
\DeclareMathOperator{\Dive}{Div}
\title[On the existence of minimizers for the Neo-hookean energy]{On the Existence of minimizers for the Neo-hookean energy in the axisymmetric setting}
\author{Duvan Henao and R\'emy Rodiac}
\date{}
\address[D.Henao]{Facultad de Matematic\'as, Pontificia Universidad Cat\'olica de Chile, Vicu\~na Mackenna 4860, Macul, Santiago Chile}
\email{dhenao@mat.puc.cl}
\address[R.Rodiac]{Facultad de Matematic\'as, Pontificia Universidad Cat\'olica de Chile, Vicu\~na Mackenna 4860, Macul, Santiago Chile}
\email{remy.rodiac@mat.uc.cl}
\begin{document}

\begin{abstract}
Let $\O$ be a smooth bounded axisymmetric set in $\R^3$. In this paper we investigate the existence of minimizers of the so-called neo-Hookean energy among a class of axisymmetric maps. Due to the appearance of a critical exponent in the energy we must face a problem of lack of compactness. Indeed as shown by an example of Conti-De Lellis in \cite[Section 6]{ContiDeLellis}, a phenomenon of concentration of energy can occur preventing the strong convergence in $W^{1,2}(\O,\R^3)$ of a minimizing sequence along with the equi-integrability of the cofactors of that sequence. We prove that this phenomenon can only take place on the axis of symmetry of the domain. Thus if we consider domains that do not contain the axis of symmetry then minimizers do exist. We also provide a partial description of the lack of compactness in terms of Cartesian currents. Then we study the case where $\O$ is not necessarily axisymmetric but the boundary data is affine. In that case if we do not allow cavitation (nor in the interior neither at the boundary) then the affine extension is the unique minimizer, that is, quadratic polyconvex energies are $W^{1,2}$-quasiconvex in our admissible space. At last, in the case of an axisymmetric domain not containing its symmetry axis, we obtain for the first time the existence of weak solutions of the energy-momentum equations for $3$D neo-Hookean materials.\\

\smallskip
\noindent \textbf{Keywords.} Elastic deformations, neo-Hookean model, cavitation, surface energy, distributionnal determinant, lack of compactness, Cartesian currents, energy-momentum tensor.\\

\smallskip
\noindent \textbf{2010 Mathematics Subject Classification.} 49J45, 49Q20, 74B20, 74G65.
\end{abstract}

\maketitle

\section{Introduction}

The most commonly used model to describe the nonlinear behavior of elastic solids undergoing large deformations is that of neo-Hookean materials, whose stored energy is assumed to be of the form (see e.g.\ \cite{treloar1975physics})

\begin{equation}\label{E}
 E(u)= \int_\O | D u|^2 + H( \det D u),
\end{equation}
where $\O\subset \R^3$ is the reference configuration, $u:\O \rightarrow \R^3$ is the deformation map and $H : \R \rightarrow \R^+$ is a (smooth) convex function satisfying
\begin{equation}\label{propH}
 \lim_{t\rightarrow +\infty} \frac{H(t)}{t}=\lim_{s\rightarrow 0} H(s)= +\infty.
\end{equation}

In spite of this model being broadly used by mathematicians, physicists, materials scientists and engineers alike, and in spite of it being justifiable from statistical mechanics (cf.\ \cite{treloar1975physics}), the existence of stable configurations (i.e.\ minimizers of $E$ in an appropriate function space) remains a mathematical challenge. If the quadratic exponent in $E(u)$ is replaced by any exponent $p>2$, the existence of minimizers (in the Sobolev setting) has already been established (cf.\ \cite{Ball77}, \cite{MullerTangYang94}, \cite{MullerSpector1995}, \cite{SivaloganathanSpector}). However in the borderline case $p=2$ there are delicate issues of lack of compactness, as reported by Conti-De Lellis \cite{ContiDeLellis}, which we explain below.

We recall (cf.\ \cite{Mullercours} or \cite{Struwe}) that there are two important steps in the method of calculus of variations in order to minimize a functional $I$ in a space $X$:

\begin{itemize}
\item[i)] Compactness of minimizing sequences: let $(u_n)_n$ be a minimizing sequence for $I$ in $X$, then there exists $u \in X$ such that $u_n \overset{\tau}{\rightarrow} u$ in $X$, where $\overset{\tau}{\rightarrow}$ refers to the convergence for a suitable topology in X.

\item[ii)] Lower semi-continuity: for a minimizing sequence $(u_n)_n$ such that $u_n \overset{\tau}{\rightarrow} u$ in $X$ we have that
    \[ I(u) \leq \lim_{n \rightarrow +\infty} I(u_n). \]
\end{itemize}

We look for minimizers of $E$ in a class of deformations which satisfy some invertibility conditions (in order to avoid interpenetration of matter), which are orientation preserving and have prescribed boundary data (pure displacement problem).
We let $M>0$, $\O,\O' \subset \R^3$ be smooth bounded domains, $g: \overline{\O}\rightarrow \overline{\O'}$ be a $C^1$-diffeomorphism and

 \begin{eqnarray}
 \mathcal{A} &:= & \{ u\in W^{1,2}\cap L^\infty(\O,\R^3);\ u_{|\partial \O}=g_{|\partial \O}, \ \|u\|_\infty \leq M,   \nonumber \\
 & &  \phantom{aaaaaaaaaaaaaa} \det D u>0 \text{ a.e.\ and u is one-to-one a.e.}\}. \nonumber
\end{eqnarray}

A first difficulty is that we can not reasonably hope for lower semicontinuity of $E$ in the space $\mathcal{A}$. Indeed the space $\mathcal{A}$ allows the possibility of \emph{cavitation}, i.e.\ formation of holes in the deformed configuration $\O'$ (see \cite{BallDiscontinuous} for a presentation of the phenomenon of cavitation in the radial setting). In \cite{BallMurat}, Ball-Murat proved that if cavitation is energetically favorable then the functional \eqref{E} is not weakly lower semicontinuous. More precisely if there is $\lambda >0$ and a map $u\in W^{1,2}$ such that, for the unit ball $B$
 \[ \int_B | D u|^2 + H( \det D u) < \int_B |D(\lambda \text{Id})|^2+ H( \det D (\lambda \text{Id})), \ \ \text{with } u=\lambda\text{Id} \ \text{on } \partial B,\] then by suitably rescaling $u$ and covering $B$ with small balls one can construct a sequence such that
\[ u_k \rightharpoonup \lambda \text{Id} \ \ \text{ in } W^{1,2}, \]
\[ \lim_{k\rightarrow +\infty} \int_B | D u_k|^2 + H( \det D u_k)= \int_B | D u|^2 + H( \det D u)< E(\lambda\text{Id}). \]
Such a bad sequence corresponds to the creation of many very small cavities of (almost) constant total volume.

 As mentioned before, working with energies of the form
 \begin{equation}\label{E_p}
  E_p(u)= \int_\O | D u|^p + H( \det D u), \ \ \ \text{with } p>2
 \end{equation}
 Müller-Spector proposed to consider a stored energy which is the sum of \eqref{E_p} and of $\text{Per } u(\Omega)$, here $\text{Per}$ denotes the perimeter of a set and $u(\O)$ has to be defined in a certain precise sense (we need to consider the geometrical image of $u$ cf.\ Definition 2.7 in \cite{ContiDeLellis}). The last term penalizes the creation of surface and hence cavitation. They also introduced a notion of invertibility called condition (INV) which is stable under weak convergence in $W^{1,p}$ for $p>2$ and which allows them to recover lower semicontinuity of energies of the type $E_p$ and compactness of minimizing sequences in the appropriate space. The formulation of that condition relies on the topological degree (cf.\ \cite{degreeandBMO} for more on the degree). In \cite{ContiDeLellis}, using the definition of the degree in $W^{1,2}\cap L^\infty(\Omega,\R^3)$, Conti-De Lellis extended condition (INV) to deformations belonging to that space. We refer to Definition 3.6 in \cite{ContiDeLellis} and Definition 3.2 in \cite{MullerSpector1995} for the precise definition of condition (INV). However they observed that this condition is no longer closed under weak $W^{1,2}$ convergence by giving an explicit example (Section 6 in \cite{ContiDeLellis}). Thus minimizing the energy $E$ in a space of deformations which satisfy condition (INV) is a problem with lack of compactness.

Another approach developed by Henao--Mora-Corral consists in refining the notion of the surface energy that measures the surface created by $u$ in the deformed configuration. For $f\in C^\infty_c(\O\times \R^3,\R^3)$, if $u\in W^{1,2}$ is such that $\det D u \in L^1(\O)$, we define
\[ \mathcal{E}_u(f)=\int_\O \langle\cof D u(x), D_xf(x,u(x))\rangle+\det D u(x)\dive_yf(x,u(x)) dx\]
and
\[ \mathcal{E}(u)= \sup \{\mathcal{E}_u(f), \ f \in C^\infty_c(\O\times\R^3,\R^3), \ \|f\|_\infty \leq 1\}.\]
Then they consider the minimization of a stored energy of the form
\[G(u)= \int_\O W(x,u(x),Du(x)) +\lambda \mathcal{E}(u),  \]
in the space
\[\mathcal{A}_p:=\{ u \in W^{1,p}(\O,\R^3); \det Du >0 \text{ a.e }, u \text{ is one-to-one a.e.}, u_{|\partial \Omega}=g_{|\partial \Omega} \}\]
with $\lambda>0$ and  \[W(x,y,F) \geq a(x)+c|F|^p+h_1(|\text{cof} F|)+h_2(\det F) \]
for a.e.\ $x\in \O$, all $y\in \R^3$ and all $F \in R^{3\times 3}$, for some $p\geq 2$, $a\in L^1(\O)$, some constant $c>0$, an increasing function $h_1:(0,\infty) \rightarrow [0,\infty)$ and a convex function $h_2:(0,\infty) \rightarrow \R$ such that
\[ \lim_{t\rightarrow +\infty} \frac{h_1(t)}{t}=\lim_{t\rightarrow +\infty} \frac{h_2(t)}{t}=\lim_{t\rightarrow 0^+}h_2(t)=+\infty.\]
The introduction of the surface energy $\mathcal{E}$ allows them to recover lower semicontinuity and compactness of the problem without using a supplementary condition like (INV). Indeed they proved that under certain conditions including a uniform bound on the surface energy $\mathcal{E}$, a sequence of one-to-one almost everywhere deformations converging pointwise is such that the limit is also one-to-one almost everywhere (see Theorem 2 in \cite{HenaoMoraCorral2010}). However their result does not include the minimization of $E$. The same example of Conti-De Lellis shows that the minimization of $E$ in the space
\[ \mathcal{A}_{nc} = \{ u\in \mathcal{A}, \ \mathcal{E}(u)=0\}, \]
for example, is also a problem with lack of compactness. Indeed, for a deformation $u\in W^{1,2}\cap L^\infty(\O,\R^3)$, $\mathcal{E}(u)=0$ implies that $\Det Du=\det Du$ (see Proposition 7.1 in \cite{HenaoMoraCorral2012}). Here $\Det Du$ is the distributional determinant ($\Det Du:= \frac 13 \dive[ (\cof Du)^T.u ]$), whereas $\det Du$ is the determinant defined pointwise. Now Conti-De Lellis constructed a sequence $(u_n)$ of bi-Lipschitz homeomorphisms (hence satisfying $\mathcal{E}(u)=0$ cf.\ Lemma 5.3 in \cite{HenaoMoraCorral2012}) such that $u_n \rightharpoonup u$ in $W^{1,2}$ and $\Det Du=\det Du +\frac{\pi}{6}\delta_p-\frac{\pi}{6}\delta_O$ for two points $P,O$ in the domain they considered, hence $\mathcal{E}(u)\neq 0$. This example corresponds to what is called a \emph{dipole} phenomenon, in harmonic maps theory for example. The notion of dipole was introduced in \cite{BrezisCoronLieb}. It is a concentration phenomenon of the Dirichlet energy on a segment or a curve. \\

Thus we can address the problem of finding stable configurations for neo-Hookean materials through (at least) two different approaches. Both approaches leading to two problems, one without cavitation and one with cavitation. If we use the approach of Henao--Mora-Corral these two problems are: \\

\textbf{Problem 1}: Minimize the energy $E(u)$ for $u$ in the space $\mathcal{A}_{nc}$. \\

\textbf{Problem 2}: Minimize the energy $E(u)+\lambda \mathcal E(u)$ in the space $\mathcal{A}$, for some $\lambda>0$.\\

In order to translate these problems in the approach of M\"uller-Spector and Conti-De Lellis we introduce

\begin{eqnarray}
\I &:=& \{u\in W^{1,2}\cap L^\infty(\O,\R^3); u_{|\p \O}=g_{|\p \O}, \|u\|_\infty \leq M, \nonumber \\
  & &  \phantom{aaaaaaaaaaaaaa} \det Du> 0 \ \text{ a.e.\ and } u \text{ satisfies  (INV)} \} \nonumber
\end{eqnarray}
and
\begin{equation}\nonumber
\I_{nc}:= \{ u \in \I; \  \det Du=\Det Du \}.
\end{equation}
Then we are led to: \\

\textbf{Problem 1'}: Minimize the energy $E(u)$ for $u$ in the space $\mathcal{I}_{nc}$. \\

\textbf{Problem 2'}: Minimize the energy $E(u)+\lambda \text{Per}(im_G(u))$ in the space $\mathcal{I}$, for some $\lambda>0$.\\

 The difficulties in both approaches are similar. Indeed, in light of Theorem 1 of \cite{HenaoMoraCorral2010} and Theorem 8.5 of \cite{HenaoMoraCorral2012}, a crucial ingredient for the lower semicontinuity of $\mathcal{E}$ or for the condition (INV) to pass to the weak limit in $W^{1,2}$ is the equi-integrability of the cofactors. We can check that the cofactors are not equi-integrable in the example of Conti-De Lellis, they concentrate on a segment. For conciseness, we shall focus only on the first approach. We note that the example of Conti-De Lellis is axisymmetric. Thus the axisymmetric setting seems to contain already the difficulties of the problem although it is simpler. That is why we choose to study the existence of minimizers of the energy $E$ in the space of deformations that belong to $\mathcal{A}_{nc}$ and are also axisymmetric.

\begin{definition}
Let $\O \subset \R^3$ be a bounded axisymmetric domain (with respect to the $z$ axis). Let $\O_0:=\O \cap xOz$ ($xOz$ denotes the plane passing through the origin and containing the vectors $e_x$ and $e_z$). We say that $u:\O \rightarrow \R^3$ is axisymmetric if there exists $v=(v_1,v_2): \O_0 \rightarrow \R^2$ such that
\begin{equation}\nonumber
u(r\cos\theta, r\sin \theta,z)=v_1(r,z)(\cos \theta e_x+\sin \theta e_y)+v_2(r,z)e_z, \ \ \text{with } v_1\geq 0 \text{ a.e}.
\end{equation}
\end{definition}

\begin{definition}
Let $\O \subset \R^3$ and $\O' \subset \R^3$ be smooth bounded axisymmetric domains. Let $g: \overline{\O}\rightarrow \overline{\O'}$ be a $C^1$ axisymmetric diffeomorphism. We set
\begin{equation}\nonumber
\mathcal{A}^{axi}:=\{ w \in \mathcal{A}; w \text{ is axisymmetric} \}
\end{equation}
and
\begin{equation}\nonumber
\mathcal{A}_{nc}^{axi}:=\{ w \in \mathcal{A}_{nc}; w \text{ is axisymmetric} \}.
\end{equation}
\end{definition}

The fundamental question in this article is the following:
\[ \text{ Is } \inf\{E(w); w \in \mathcal{A}_{nc}^{axi}\} \text{ attained ? }\]

We now state our main result. We denote by $O_z$ the $z$ axis.

\begin{theorem}\label{main}
Let $(u_n)_n$ be a minimizing sequence for $E$ in $\mathcal{A}_{nc}^{axi}$. Then
\begin{equation}\nonumber
\cof Du_n \rightharpoonup \cof Du \ \text{ in } L^1(K) \text{ for every } K\subset (\O\setminus\{O_z\}) \text{ compact }.
\end{equation}
Furthermore if $\O$ is an axisymmetric domain such that $\inf_{(x,y,z)\in \O} \sqrt{x^2+y^2}>0$ then there exists a minimizer of $E$ in $\mathcal{A}_{nc}^{axi}$.
\end{theorem}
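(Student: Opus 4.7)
The plan is to exploit the axisymmetric structure in order to reduce the problem to a two-dimensional one, where the borderline $W^{1,2}$-critical issue with the Jacobian can be handled via M\"uller's theorem on nonnegative Jacobians. In the orthonormal cylindrical frames $(e_r,e_\theta,e_z)$, which are aligned at source and target because $u$ is axisymmetric, a direct computation gives
\[
|Du|^2 = |\nabla v|^2 + \left(\tfrac{v_1}{r}\right)^{2}, \qquad \det Du = \tfrac{v_1}{r}\det \nabla v, \qquad |\cof Du|^2 = \left(\tfrac{v_1}{r}\right)^{2} |\nabla v|^2 + (\det \nabla v)^2,
\]
where $\nabla v$ denotes the $2\times 2$ gradient of $v=(v_1,v_2):\O_0\to \R^2$ with respect to $(r,z)$. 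The coercivity of $E$, together with $\|u_n\|_\infty \leq M$, yields a subsequence with $u_n \rightharpoonup u$ in $W^{1,2}$ and weakly-$*$ in $L^\infty$; via the axisymmetric reduction this means $v_n\rightharpoonup v$ in $W^{1,2}_{\mathrm{loc}}(\O_0\setminus\{r=0\})$, and $v_1\geq 0$, axisymmetry and the boundary trace of $u$ are preserved in the limit by standard arguments.

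For the first assertion, fix a compact $K\subset\O\setminus O_z$, on which $r\geq\delta>0$. The ``off-axis'' entries of $\cof Du_n$ have the form $(v_{1,n}/r)\,\partial_\alpha v_{\beta,n}$; since $v_{1,n}/r$ is uniformly bounded on $K$ and, by Rellich--Kondrachov, converges strongly in every $L^q$, while $\partial_\alpha v_{\beta,n}\rightharpoonup \partial_\alpha v_\beta$ weakly in $L^2$, these entries are bounded in $L^2(K)$, hence equi-integrable, and converge weakly in $L^1(K)$ to the corresponding entries of $\cof Du$. For the central entry $\det\nabla v_n$, the constraints $\det Du_n>0$ and $v_{1,n}\geq 0$ force $v_{1,n}>0$ and hence $\det\nabla v_n\geq 0$ a.e.\ on $K$; M\"uller's theorem on equi-integrability of nonnegative Jacobians of $W^{1,2}$-bounded planar maps, applied to $v_n$ on a slight enlargement of $K$ inside $\O_0\setminus\{r=0\}$, then shows that $\det\nabla v_n$ is equi-integrable and, via the standard distributional identification using $\det\nabla v_n=\partial_r(v_{1,n}\partial_z v_{2,n})-\partial_z(v_{1,n}\partial_r v_{2,n})$ and the strong $L^q$ convergence of $v_n$, converges weakly in $L^1(K)$ to $\det\nabla v$. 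Dunford--Pettis then gives $\cof Du_n\rightharpoonup \cof Du$ in $L^1(K)$.

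When moreover $\inf_\O r>0$, the previous step applies with $K=\overline\O$, so $\cof Du_n\rightharpoonup \cof Du$ in $L^1(\O)$; using the strong $L^q$-convergence of $v_{1,n}/r$ and the equi-integrability of $\det\nabla v_n$, the product formula $\det Du_n=(v_{1,n}/r)\det\nabla v_n$ likewise gives $\det Du_n \rightharpoonup \det Du$ in $L^1(\O)$. Weak lower semicontinuity of the convex terms $\int|Du|^2$ and $\int H(\det Du)$ then yields $E(u)\leq \liminf E(u_n)$. It remains to check $u\in\mathcal{A}_{nc}^{axi}$: axisymmetry, the $L^\infty$ bound and the boundary trace are stable under weak convergence; $\det Du>0$ a.e.\ follows from the $L^1$-bound on $H(\det Du)$ together with $H(s)\to\infty$ as $s\to 0^+$; the one-to-one a.e.\ property is inherited through the Henao--Mora-Corral stability theorem (Theorem~2 of \cite{HenaoMoraCorral2010}), whose key hypothesis is precisely the equi-integrability of cofactors just established; and $\mathcal{E}(u)=0$ is obtained by passing to the limit in the identity $\mathcal{E}_{u_n}(f)=0$, valid for $u_n\in\mathcal{A}_{nc}$, using the $L^1$-weak convergence of $\cof Du_n$ and $\det Du_n$ together with the strong $L^q$-convergence of $u_n$.

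The crucial step, and the main obstacle, is the equi-integrability of $\det\nabla v_n$: weak continuity of minors fails at the critical exponent $p=2$ in three dimensions, and concentration of cofactors along a segment on the axis is exactly what happens in the Conti-De Lellis example. The axisymmetric ansatz circumvents this by factoring $\det Du = (v_1/r)\det\nabla v$ and reducing the pointwise determinant to a two-dimensional Jacobian whose nonnegativity is automatic from $\det Du>0$ and $v_1\geq 0$, thereby enabling M\"uller's theorem; the weight $(v_1/r)$ is benign away from $O_z$ but degenerates on the axis, which is exactly why concentration is confined to $O_z$ and why the hypothesis $\inf_\O r>0$ is sufficient for existence.
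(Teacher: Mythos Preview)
Your proof is correct and follows essentially the same route as the paper: cylindrical reduction to the planar map $v$, M\"uller's higher-integrability theorem applied to the nonnegative Jacobian $\det\nabla v_n$, strong--weak product arguments for the remaining cofactor entries, and then lower semicontinuity plus the Henao--Mora-Corral stability results to close $\mathcal{A}_{nc}^{axi}$.

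Two minor remarks. For $\det Du_n\rightharpoonup\det Du$ the paper (Proposition~\ref{detconv}) argues via the distributional determinant and the De~Lellis--Ghiraldin theorem, which yields the conclusion in $L^1(\O)$ even when $\O$ contains the axis; your direct product argument $\det Du_n=(v_{1,n}/r)\det\nabla v_n$ is simpler but requires $r$ to be bounded away from zero, which suffices for the existence part of the theorem as stated. Also, contrary to what you write, Theorem~2 of \cite{HenaoMoraCorral2010} (giving one-to-one a.e.\ of the limit) does \emph{not} require equi-integrability of the cofactors---the paper explicitly notes this in the remark following Proposition~\ref{detconv}---so that step is in fact more robust than you indicate.
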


The spirit of the proof of this theorem is the following. Working in an axisymmetric setting in $\R^3$ reduces the problem to an ``almost" two-dimensional problem or a two dimensional problem ``away from the axis". Indeed expressing the cofactor matrix in cylindrical coordinates, we see that the problem reduces to the convergence of a $2\times 2$ determinant. We can then use a theorem of Müller \cite{Higherintegrability} about the weak $L^1$ convergence of positive determinant of transformations $v\in W^{1,n}(U,\R^n)$ with $U\subset \R^n$ to obtain the weak $L^1$ convergence of the cofactors away from the axis.
The weak $L^1$ convergence of the $3\times3$ determinant follows then from Theorem 3 in \cite{HenaoMoraCorral2010}. We provide an alternative proof of this fact using a result of De Lellis-Ghiraldin, which states that under some conditions, if $\Det D u$ is a Radon measure then its absolutely continuous part is equal to $\det D u$ almost everywhere.


Once we obtain the existence of minimizers in the class $\mathcal{A}_{nc}^{axi}$ a natural question is to know if these minimizers satisfy some equations. In elasticity it is a long standing open problem to know if minimizers do satisfy the Euler-Lagrange equations associated to the stored energy. However we expect these minimizers to satisfy other equations like inner-variational equations (see, e.g.\ \cite{Ballmin}). Our second result states that the minimizers we obtained do satisfy the inner-variational equations associated with the neo-Hookean energy $E$. Thus we provide the first existence results of nontrivial solutions to these equations.

\begin{theorem}
Let $H$ be as before a $C^1$ convex function satisfying \eqref{propH}. Assume furthermore that there exist $s>0, c_1,c_2>0, d_0>0$ such that
\begin{equation}\nonumber
c_1t^{-s-k}\leq (-1)^k\frac{d^k}{dt^k}H(t)\leq c_2t^{-s-k} \ \ \ \ \text{ for } k=0,1 \ \ \text{ and for } t<d_0,
\end{equation}
and there exist $\tau>0$, $c_3,c_4>0$ and $d_1>0$ such that
\begin{equation}\nonumber
c_3t^{\tau+1} \leq H'(t) \leq c_4t^{\tau+1} \ \ \ \ \text{ for } t\geq d_1.
\end{equation}
Let $u$ be a minimizer of $E$ in $\mathcal{A}_{nc}^{axi}$. Then $u$ satisfies
\begin{equation}
\Dive\left(2Du^TDu+\left[H'(\det Du)\det Du-|Du|^2-H(\det Du)\right]I\right)=0. \nonumber
\end{equation}
That last equation can also be written
\begin{eqnarray}
\sum_{i=1}^3 \p_i\left[ 2\p_iu\cdot \p_ju -\left( H'(\det Du)\det Du-|Du|^2-H(\det Du) \right)\delta_{ij}\right]=0, \nonumber \\
\phantom{aaaaaaaaaaaaaaaaaaaaaa} \text{ for } j=1,...,3. \nonumber
\end{eqnarray}
\end{theorem}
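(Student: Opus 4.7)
The plan is to derive the stated equation as an inner-variational (Noether) identity produced by composing $u$ with a family of diffeomorphisms of the reference configuration. Given an axisymmetric test field $\phi\in C_c^\infty(\O,\R^3)$ (equivariant under rotations around the $z$-axis), set $\psi_t:=\mathrm{id}+t\phi$ and $u_t:=u\circ\psi_t^{-1}$; for $|t|$ small, $\psi_t$ is an axisymmetric $C^1$-diffeomorphism of $\overline{\O}$ onto itself equal to the identity on $\p\O$. The first step is to verify that $u_t\in\mathcal{A}_{nc}^{axi}$: the boundary data, the $L^\infty$ bound, the positivity of $\det Du_t$, and the a.e.\ injectivity are preserved under composition with a smooth orientation-preserving diffeomorphism; axisymmetry of $u_t$ follows from the rotational equivariance of $\psi_t^{-1}$ and of $u$; the constraint $\det Du_t=\Det Du_t$ is preserved because the distributional determinant transforms covariantly under composition with a $C^1$-diffeomorphism, as can be checked directly from the definition $\Det Du=\tfrac13\dive[(\cof Du)^T u]$ via a change of variables.

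By the minimality of $u$, $E(u_t)\geq E(u)$ for all $|t|$ small, whence $\tfrac{d}{dt}\big|_{t=0}E(u_t)=0$. After the change of variables $x=\psi_t(y)$,
\[
E(u_t)=\int_\O\left[\bigl|Du(y)(D\psi_t(y))^{-1}\bigr|^2+H\!\left(\tfrac{\det Du(y)}{\det D\psi_t(y)}\right)\right]\det D\psi_t(y)\,dy,
\]
and differentiating at $t=0$ using $\tfrac{d}{dt}\big|_{t=0}(D\psi_t)^{-1}=-D\phi$ and $\tfrac{d}{dt}\big|_{t=0}\det D\psi_t=\dive\phi$ yields
\[
\int_\O\sum_{i,j}\bigl[-2\,\p_iu\cdot\p_ju+(|Du|^2+H(\det Du)-H'(\det Du)\det Du)\delta_{ij}\bigr]\p_j\phi_i\,dx=0,
\]
which is exactly the weak form of the stated identity. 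Since $u$ is axisymmetric, the energy-momentum tensor $T:=2Du^TDu+(H'(\det Du)\det Du-|Du|^2-H(\det Du))I$ is rotationally equivariant, so $\Dive T$ is itself an axisymmetric vector field, and its vanishing against all axisymmetric $\phi$ is equivalent to $\Dive T=0$ in the full distributional sense on $\O$.

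The main obstacle is the interchange of derivative and integral at $t=0$, which requires an $L^1(\O)$-dominant for the difference quotients; this is precisely what the growth hypotheses on $H$ are designed to furnish. Near $t=0$ the bounds $c_1t^{-s-k}\leq(-1)^k H^{(k)}(t)\leq c_2t^{-s-k}$ for $k=0,1$ imply $|H'(t)t|\leq C\,H(t)$, while the estimates $c_3t^{\tau+1}\leq H'(t)\leq c_4t^{\tau+1}$ near $+\infty$ yield the same inequality there; hence $|H'(\det Du)\det Du|\leq C\,H(\det Du)\in L^1(\O)$. Combined with the uniform control of $\det D\psi_t$ (bounded away from $0$ and $+\infty$) and of $D\psi_t,(D\psi_t)^{-1}$ in $L^\infty$ for $|t|$ small, and with $|Du|^2\in L^1(\O)$, these bounds supply the required integrable dominant for both the Dirichlet and the $H(\det Du_t)$ contributions, and dominated convergence then closes the argument.
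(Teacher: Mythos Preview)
Your approach is essentially the paper's: both derive the energy-momentum equation from inner variations $u_t=u\circ\theta_t$, use the growth hypotheses on $H$ to furnish an $L^1$-dominant for the difference quotients, and identify the outcome as $\Dive T_u=0$. The organizational difference is that you stay in $3$D and test against axisymmetric $\phi$, whereas the paper reduces to the $2$D functional $G(v)$ on the meridian half-plane, carries out the domination argument there (via an abstract lemma adapted from \cite{BaumanOwenPhillips}, verified for the $H$-term using $r_0<r<R_0$), and only afterwards translates the resulting $2$D identities back into $\Dive T_u=0$ through the cylindrical expression of $T_u$. Your route is slightly more direct and, incidentally, does not appear to require the hypothesis $\inf_{\O}\sqrt{x^2+y^2}>0$ that the paper invokes in its $2$D estimates.

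One point needs correcting. Membership in $\mathcal{A}_{nc}^{axi}$ requires $\mathcal{E}(u_t)=0$, not merely $\Det Du_t=\det Du_t$; the paper only asserts the implication $\mathcal{E}(u)=0\Rightarrow\Det Du=\det Du$, not the converse. The paper closes this (its Lemma~\ref{appartenance}) by showing directly, via the change of variables $z=\theta_t(x)$ in the defining integral, that $\mathcal{E}_{u\circ\theta_t}(f)=\mathcal{E}_u(f_t)$ for every test $f$, whence $\mathcal{E}(u\circ\theta_t)=\mathcal{E}(u)=0$; you should substitute this computation for your covariance-of-$\Det$ argument. Also, for $u_t$ to remain axisymmetric in the paper's sense (no $e_\theta$-component), $\phi$ must be of the form $\phi_r(r,z)e_r+\phi_z(r,z)e_z$; this still yields the full equation because the block structure of $T_u$ in cylindrical coordinates forces $(\Dive T_u)_\theta=0$ automatically.
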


The paper is organized as follows: in Section 2 we recall the notations and definitions needed in the sequel. In Section 3 we study the weak convergence of the cofactors and determinants associated to a minimizing sequence for $E$ in $\mathcal{A}_{nc}^{axi}$. This rests upon  computations of the energy $E$, the cofactors and the determinants in cylindrical coordinates. For the comfort of the reader we provide these computations in the Appendix. This weak convergence is used to obtain minimizers of the problem. In Section 4 we discuss the lack of compactness of the problem from the point of view of geometric measure theory and currents. In Section 5, we study the case where the boundary data is affine, but for a general domain, and prove that, despite the lack of compactness of the problem, minimizers do exist in this situation if we work in a class where no cavitation can occur (in the interior and at the boundary of the domain). At last we prove that the minimizers obtained satisfy the inner-variational equations of the neo-Hookean energy $E$.

\section{Notations and Definitions}
Let $n \geq2$, the space of $n\times n$ matrices with real coefficients is denoted by $M_n(\R)$. The identity matrix is denoted by $I$. \\
For two vectors $a,b$ in $\R^n$,  $a\cdot b$ denotes their inner product. The inner product of two matrices $A,B$ in $M_n(\R)$ is defined by $\langle A,B\rangle =\tr (A^TB)$, where $\tr$ denotes the trace of a matrix and $A^T$ is the transpose of $A$. \\
The cofactor matrix of $A$ is denoted by $\cof A$ and satisfies $A^T\cof A=(\det A)I$. For $E,F$ two Banach spaces and a $C^1$ function $f:E\rightarrow F$, we denote by $Df(x)$ the differential of $f$ at $x$ and by $Df(x).h$ the differential of $f$ at $x$ applied to the vector $h\in E$. \\
 We use the notations $x,y,z$ for cartesian coordinates in $\R^3$, and $(r,\theta,z)$ for cylindrical coordinates ($x=r\cos \theta,y=r\sin\theta$). We let $X=(r,z)$ in $ \R^+\times \R$. We use $(e_x,e_y,e_z)$ as a notation for the canonical basis of $\R^3$ and we let $e_r:=\cos\theta e_x+\sin \theta e_y$ and $e_\theta:=-\sin \theta e_x+ \cos \theta e_y$. \\
 We denote by $a\wedge b=a_1b_2-a_2b_1$ the determinant of two vectors in $\R^2$. \\
 For general vectors $a,b$ in $\R^n$ the tensor product $a \otimes b$ is a tensor (which can be viewed as a matrix) defined by its action $a\otimes b.h= (b\cdot h)a$ for $h$ in $\R^n$. \\
 The divergence operator is denoted by $\Dive$ in the reference configuration, and by $\dive$ in the deformed configuration. More precisely, the expression $\Dive \phi$ is used for functions $\phi$ defined on $\O$ (i.e.\ on the $x$ variables), while $\dive g$ is used for functions $g$ defined on the target space (so the differentiation is with respect to $y$).
\section{Proof of Theorem \ref{main}}

Let $\O\subset \R^3$ be a smooth axisymmetric bounded domain and $u:\O \rightarrow \R^3$ an axisymmetric deformation. We can write
\[u(r\cos \theta, r\sin \theta,z)=v_1(r,z)(\cos\theta e_x+\sin \theta e_y)+ v_2(r,z)e_z,\]
for some $v=(v_1,v_2):\O_0 \rightarrow \R^2$ with $v_1 \geq 0$ almost everywhere.  We can express the Jacobian matrix $Du$ in the basis $(e_r,e_\theta,e_z)$ as (cf.\ Appendix):
\begin{equation}\nonumber
Du=\begin{pmatrix}
\p_rv^1 & 0 & \p_z v^1 \\
0 & \frac{v^1}{r} & 0 \\
\p_rv^2 & 0 & \p_z v^2
\end{pmatrix}.
\end{equation}
We thus obtain that the energy of $E$ can be written as
\begin{equation}\nonumber
E(u)=2\pi G(v)
\end{equation}
with
\begin{equation}\label{G}
G(v) =\int_{\O_0} (|\p_rv|^2+|\p_zv|^2)rdrdz +\int_{\O_0} \frac{v_1^2}{r}drdz+ \int_{\O_0}H(\frac{v_1}{r}\det D v)rdrdz.
\end{equation}

We also have that
\begin{equation}\nonumber
\det Du= \frac{v_1}{r}\det D v,
\end{equation}
with $D v= \begin{pmatrix}
\p_r v_1 & \p_z v_1 \\
\p_r v_2 & \p_z v_2
\end{pmatrix}$
\begin{equation}\nonumber
\cof Du=\begin{pmatrix}
\frac{1}{r}v_1\p_zv_2 & 0 & -\frac{1}{r}v_1\p_zv_1 \\
0 & \det D v & 0 \\
-\frac{1}{r}v_1\p_rv_2 & 0 & \frac{1}{r}v_1\p_rv_1
\end{pmatrix}
\end{equation}
where we also expressed the cofactor matrix in the basis $(e_r,e_\theta,e_z)$. Another quantity which will play a role is the vector field $\mathcal{D}(u):= (\cof Du)^T.u$ which can also be written in the basis $(e_r,e_\theta,e_z)$ as
\begin{equation}\nonumber
\mathcal{D}(u)=\frac{v_1}{r}(v\wedge \p_zv,0,-v\wedge \p_rv).
\end{equation}

We now proceed to the proof of Theorem \ref{main} which we divide in several propositions describing the behavior of minimizing sequences. Note that since we assume that the trace of the deformations in $\mathcal{A}_{nc}^{axi}$ is the restriction to $\p \Omega$ of a $C^1$ diffeomorphism we have that $$0< \inf\{E(w); w \in \mathcal{A}_{nc}^{axi} \}<+\infty.$$ We begin with

\begin{proposition}\label{cofconv}
Let $(u_n)_n$ be a minimizing sequence for $E$ in $\mathcal{A}_{nc}^{axi}$ then
\begin{equation}\nonumber
\cof Du_n \rightharpoonup \cof Du \ \ \ \text{ in } L^1(K) \ \text{ for every K compact such that } K\subset \O \setminus \{O_z\}.
\end{equation}
\end{proposition}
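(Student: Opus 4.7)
The plan is to exploit the reduction, already set up in the excerpt, of the axisymmetric 3D problem to a 2D problem for $v=(v_1,v_2):\O_0\to\R^2$, and to apply Müller's higher-integrability theorem for non-negative Jacobians to the sequence $v_n$ on compact sets away from the symmetry axis.

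Fix a compact $K\subset\O\setminus O_z$ and let $K_0\subset\O_0$ be its trace in the half-plane; by hypothesis $r\geq r_0>0$ on $K_0$. From the energy bound $E(u_n)=2\pi G(v_n)\leq C$ combined with $r\geq r_0$, the first term in $G$ yields a uniform bound on $\int_{K_0}(|\p_rv_n|^2+|\p_zv_n|^2)\,drdz$; together with $\|u_n\|_\infty\leq M$ (which gives $0\leq v_{1,n}\leq M$ and $|v_{2,n}|\leq M$), the sequence $v_n$ is bounded in $W^{1,2}\cap L^\infty(K_0,\R^2)$. Extracting a subsequence, $v_n\rightharpoonup v$ in $W^{1,2}(K_0)$ and, by Rellich, $v_n\to v$ in $L^p(K_0,\R^2)$ for every $p<\infty$.

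Inspecting the explicit expression for $\cof Du_n$ in cylindrical coordinates, the four corner entries are of the form $\frac{1}{r}v_{1,n}\p_*v_{j,n}$ with $1/r\leq 1/r_0$ on $K_0$. Each such term is the product of $v_{1,n}/r$, which converges strongly in $L^p(K_0)$ for every $p<\infty$ and is uniformly bounded in $L^\infty$, and a partial derivative of $v_n$ that converges weakly in $L^2(K_0)$. These entries therefore converge weakly in $L^2(K_0)$, and a fortiori in $L^1$. The only non-routine entry is the middle one, $\det Dv_n$.

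For this entry I would argue as follows. Since $\det Du_n>0$ a.e.\ and $v_{1,n}/r\geq 0$, one has $\det Dv_n\geq 0$ a.e.\ on $K_0$. Müller's theorem \cite{Higherintegrability}, applied to $v_n\in W^{1,2}(K_0,\R^2)$ with non-negative Jacobian, gives a uniform bound on $\det Dv_n$ in $L\log L(K_0')$ for every $K_0'\Subset K_0$; in particular $\det Dv_n$ is equi-integrable on $K_0'$. On the other hand, the distributional identity
\[\det Dv_n=\p_r(v_{1,n}\p_zv_{2,n})-\p_z(v_{1,n}\p_rv_{2,n}),\]
combined with the strong-weak pairing $v_{1,n}\p_*v_{2,n}\rightharpoonup v_1\p_*v_2$ in $L^1(K_0)$, shows that $\det Dv_n\to\det Dv$ in the sense of distributions on $K_0$. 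Equi-integrability together with distributional convergence then upgrades, via the Dunford--Pettis theorem and uniqueness of distributional limits, to weak $L^1$ convergence $\det Dv_n\rightharpoonup\det Dv$ on $K_0'$. Translating back to 3D through the cylindrical-coordinate formulas in the excerpt yields the weak $L^1$ convergence of $\cof Du_n$ to $\cof Du$ on $K$.

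The hard part is the equi-integrability of $\det Dv_n$: we sit at the borderline $p=n=2$ where the Jacobian of a generic $W^{1,2}$ map from $\R^2$ to $\R^2$ need not even be integrable, so it is precisely the sign condition $\det Dv_n\geq 0$, combined with Müller's theorem, that allows us to promote the $W^{1,2}$ bound on $v_n$ into an $L\log L$ bound on its Jacobian. The necessity of this sign condition — which in turn relies on $v_{1,n}/r$ being well-defined and strictly positive — is exactly what forces the restriction to compact sets disjoint from $O_z$ in the statement.
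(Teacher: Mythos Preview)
Your proof is correct and follows essentially the same route as the paper: reduce to the 2D map $v_n$ on compacts away from the axis, handle the four corner entries of $\cof Du_n$ by the strong $L^2$ convergence of $v_{1,n}$ paired with the weak $L^2$ convergence of $Dv_n$, and treat the middle entry $\det Dv_n$ via M\"uller's higher-integrability result for non-negative Jacobians. The only cosmetic difference is that the paper invokes M\"uller's theorem directly as a weak-$L^1$ convergence statement, whereas you unpack it into the $L\log L$ bound plus distributional convergence and Dunford--Pettis; both lead to the same conclusion.
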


\begin{proof}
Let $u_n=v_1^ne_r+v_2^ne_\theta$ be a minimizing sequence in $\mathcal{A}_{nc}^{axi}$. We have that $G(v_n)$ is bounded (where $G$ is defined by \eqref{G}), and hence we deduce that
\begin{itemize}
\item[*] $|D v_n|\sqrt{r}$ is bounded in $L^2(\O_0)$
\item[*] $ \frac{v_1^n}{\sqrt{r}}$ is bounded in $L^2(\O_0)$.
\end{itemize}
In particular we have that $v_n \rightharpoonup v$ for some $v$ in $H^1_{\text{loc}}(\O_0 \setminus \{O_z\})$. Indeed let  $K$ be an arbitrary compact set contained in $\O_0 \setminus \{O_z\}$, there exist $0<r_0<R_0$ such that $r_0\leq r\leq R_0$ in $K$. Thus $|D v_n|$ is bounded in $L^2(K)$ and $v_n$ is bounded in $L^2(K)$ (recall that $v_n$ is bounded in $L^\infty(\O_0))$. Now because we have \[\det Du_n=\frac{1}{r}v_1^n\det D v_n>0 \text{ a.e. } \]  and we assumed that $v_1^n\geq 0$ a.e.\ (in the definition of axisymmetry) we deduce that
\[\det D v_n >0 \text{ a.e}. \]
We can thus apply a result of Müller \cite{Higherintegrability} (see also \cite{Hardyspaces}) to obtain that
\begin{equation}\nonumber
\det D v_n \rightharpoonup \det D v \text{ in } L^1(K).
\end{equation}
To check that $\cof Du_n \rightharpoonup \cof Du$ in $L^1(K)$  it remains to prove the same for the other entries of the matrix $\cof Du_n$. We only treat the term $\frac{1}{r}v_1^n\p_zv_2^n$ the proof being identical for the other terms. \\
Since $v_n$ is bounded in $L^\infty(\O_0)$ and $|D v_n|$ is bounded in $L^2(K)$  we have that $v_1^n\p_zv_2^n$ is bounded in $L^2(K)$ and hence converges weakly in that space. But thanks to the Sobolev injections we have that $v_n \rightarrow v$ strongly in $L^2(K)$. Since $D v_n \rightharpoonup D v$ in $L^2(K)$ we find that
\[ \frac{1}{r}v_1^n\p_zv_2^n \rightharpoonup \frac{1}{r}v_1\p_zv_2 \ \text{ in } L^2(K).\]
This concludes the proof.

\end{proof}

\begin{proposition}\label{detconv}
Let $(u_n)_n$ be a minimizing sequence for $E$ in $\mathcal{A}_{nc}^{axi}$ we have that
\begin{equation}\nonumber
\det Du_n \rightharpoonup \det Du \text{ in } L^1(\O)
\end{equation}
\begin{equation}\nonumber
u \text{ is one-to-one almost everywhere in } \O.
\end{equation}
\end{proposition}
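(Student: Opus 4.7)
The plan is to first pass to a subsequence along which $\det Du_n$ converges weakly in $L^1(\O)$, identify the limit with $\det Du$ by combining Proposition~\ref{cofconv} with a product-of-weak-and-strong argument off the axis, and then deduce the almost-everywhere injectivity of $u$ from the Henao--Mora-Corral stability result applied on compact subsets of $\O\setminus O_z$.

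First I would use the energy bound $\sup_n E(u_n)<+\infty$. Since $H(t)/t\to+\infty$ as $t\to+\infty$ by \eqref{propH}, the de la Vall\'ee Poussin criterion yields equi-integrability of $(\det Du_n)$ in $L^1(\O)$. Up to a subsequence, $\det Du_n\rightharpoonup \xi$ weakly in $L^1(\O)$ for some $\xi\in L^1(\O)$, and the task is to identify $\xi$ with $\det Du$. Fix any compact $K\subset \O\setminus O_z$. In cylindrical coordinates, $\det Du_n=\tfrac{v_1^n}{r}\det Dv_n$, and the middle entry of $\cof Du_n$ in the basis $(e_r,e_\theta,e_z)$ is exactly $\det Dv_n$; Proposition~\ref{cofconv} therefore supplies the weak $L^1$ convergence of $\det Dv_n$ on $K$. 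Combining this with the uniform bound $\|v_1^n\|_\infty\leq M$ and the strong convergence $v_1^n\to v_1$ in $L^2$ on compact subsets of $\O_0\setminus O_z$ (from Rellich--Kondrachov, which applies since $(v_n)$ is bounded in $H^1_{\mathrm{loc}}(\O_0\setminus O_z)$, as already noted in the proof of Proposition~\ref{cofconv}), the standard product-of-weak-and-strong argument delivers $\det Du_n\rightharpoonup \det Du$ in $L^1(K)$. By uniqueness of weak $L^1$ limits, $\xi=\det Du$ a.e.\ on every such $K$; since the axis $O_z\cap \O$ has vanishing $3$-dimensional Lebesgue measure, $\xi=\det Du$ a.e.\ in $\O$. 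A routine subsequence argument then upgrades this to convergence of the whole sequence.

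For the a.e.\ injectivity of $u$, I would invoke Theorem~2 of \cite{HenaoMoraCorral2010}: the pointwise limit of a sequence of a.e.\ one-to-one deformations whose surface energy is uniformly bounded, cofactors are equi-integrable and Jacobians converge weakly in $L^1$ is itself one-to-one almost everywhere. In our setting, $\mathcal{E}(u_n)=0$ (since $u_n\in\mathcal{A}_{nc}^{axi}$) trivially provides the surface-energy bound; Rellich--Kondrachov applied to $(u_n)$ in $W^{1,2}(\O)$ gives pointwise a.e.\ convergence $u_n\to u$ up to subsequence; and the first half of the present proposition gives the weak $L^1$ convergence of $\det Du_n$. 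The only missing ingredient globally is equi-integrability of the cofactors on the whole of $\O$, which is exactly what is lost near the axis; but Proposition~\ref{cofconv} supplies it on every $\O_\e:=\{x\in\O:\dist(x,O_z)>\e\}$. Applying the theorem on each such $\O_\e$ yields that $u$ is one-to-one almost everywhere in $\O_\e$, and letting $\e\downarrow 0$ through a countable sequence gives the property on $\O\setminus O_z$, hence on $\O$ by the measure-zero of the axis.

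The main obstacle is precisely the axis: the Conti--De Lellis example shows that cofactor concentration can genuinely occur on $O_z$, so every tool we rely on (cofactor $L^1$ convergence, equi-integrability, stability of injectivity) must be expected to fail there in the worst case. What rescues both conclusions is the fortunate fact that $O_z\cap\O$ has three-dimensional Lebesgue measure zero, so a.e.\ statements obtained on the complement of $\e$-tubes around the axis extend to the full domain by letting $\e\to 0$. An alternative route, mentioned in the introduction, would be to use the De Lellis--Ghiraldin identification of the absolutely continuous part of $\Det Du$ with $\det Du$ under conditions checkable from our weak limits (using $\Det Du_n=\det Du_n$ on $\mathcal{A}_{nc}^{axi}$ and the distributional convergence of $\Det Du_n$ obtained from Proposition~\ref{cofconv} and the strong $L^p$ convergence of $u_n$), bypassing the axis decomposition entirely.
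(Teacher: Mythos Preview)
Your argument is correct, and for the identification of the weak $L^1$ limit of $\det Du_n$ it is in fact more elementary than the paper's. The paper passes via the distributional Jacobian: it shows $(\cof Du_n)^T u_n \rightharpoonup (\cof Du)^T u$ in $L^2(K)$ for compact $K\subset \O\setminus O_z$, hence $\Det Du_n \rightharpoonup \Det Du$ in $\mathcal{D}'(\O\setminus O_z)$; since $\Det Du_n=\det Du_n\rightharpoonup\theta$ in $L^1$, it concludes that $\Det Du=\theta$ is a Radon measure off the axis and then invokes the De Lellis--Ghiraldin theorem to identify its absolutely continuous part with $\det Du$. Your route bypasses all of this by writing $\det Du_n=\tfrac{v_1^n}{r}\det Dv_n$ and combining the weak $L^1$ convergence of $\det Dv_n$ (already contained in Proposition~\ref{cofconv}) with the $L^\infty$-bounded pointwise convergence of $v_1^n/r$; this is exactly the product lemma the paper states as Lemma~\ref{Egoroff}, so your argument is self-contained within the paper. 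What your shortcut loses is the intermediate information that $\Det Du$ is a Radon measure on $\O\setminus O_z$, and the positivity $\det Du>0$ a.e.\ (which the paper obtains from $\theta>0$ via Fatou); neither is needed for the proposition as stated, but both are used later.

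On the injectivity, your localization to $\O_\e$ is correct but unnecessary: as the paper explicitly remarks, Theorem~2 of \cite{HenaoMoraCorral2010} does \emph{not} require equi-integrability of the cofactors, so it can be applied directly on all of $\O$. Your restatement of its hypotheses is thus slightly off, though harmlessly so since the subdomain argument goes through.
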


\textbf{Remarks:} 1) Note that the weak convergence in $L^1$ of the determinant of minimizing sequences proves the lower semicontinuity of $E$ for minimizing sequences in $\mathcal{A}_{nc}^{axi}$ (cf.\, e.g.\ Theorem 1, p.12 in \cite{Cartesiancurrents2}). \\

2) Proposition \ref{detconv} is a direct consequence of Theorem 3 in \cite{HenaoMoraCorral2010}. However we use Theorem 2 in \cite{HenaoMoraCorral2010} to obtain that the limit $u$ is one-to-one a.e.\ and we provide an alternative proof of the fact that $\det Du_n \rightharpoonup \det Du$, by using a result of De-Lellis-Ghiraldin. This could be relevant in the future for the full problem of minimizing $E$ in $\mathcal{A}_{nc}$ (without assuming axisymmetry) because Theorem 2 of \cite{HenaoMoraCorral2010} can be used even without yet knowing if $(\cof Du_n)_n$ is equi-integrable. We first recall that result:

\begin{theorem}[De Lellis-Ghiraldin \cite{DeLellisGhiraldin}]
Let $m \geq 1$ and $\O\subset \R^m$ be an open set. Let $u\in L^q \cap W^{1,p}(\O,\R^m)$, with $p\geq m-1,$ $\frac 1q +\frac{m-1}{p}\leq 1$, be such that $\Det Du$ is a Radon measure. Let $\nu$ be the density of the absolutely continuous part $\Det Du$ with respect to the Lebesgue measure. Then $\nu(x)=\det Du(x)$ for $\mathcal{L}^m$-a.e.\ $x \in \O$.
\end{theorem}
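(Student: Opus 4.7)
The plan is a blow-up argument that combines the Lebesgue--Besicovitch differentiation theorem for Radon measures with a stability property of the distributional Jacobian under rescaling. Write the Radon--Nikodym decomposition $\Det Du = \nu\,\mathcal L^m + \mu_s$ and pick a point $x_0$ at which simultaneously: (i) $x_0$ is a Lebesgue point of both $\nu$ and $\det Du$; (ii) the singular part satisfies $\mu_s(B_r(x_0)) = o(r^m)$; (iii) $x_0$ is an $L^p$-Lebesgue point of $Du$; and (iv) $u$ is $L^{p^*}$-approximately differentiable at $x_0$ with approximate gradient $Du(x_0)$ (available $\mathcal L^m$-a.e.\ by the Calder\'on--Zygmund differentiability theorem for $W^{1,p}$ maps). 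Each condition holds $\mathcal L^m$-a.e., so their conjunction does as well.

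At such a point, introduce the blow-up $u_r(y) := r^{-1}(u(x_0 + ry) - u(x_0))$ and the affine map $L(y) := Du(x_0)\,y$, both on $B_1(0)$. A change of variables in the distributional pairing, combined with the Piola identity $\Dive\cof Du = 0$ (valid distributionally because $p \ge m-1$ ensures $\cof Du \in L^1_{\mathrm{loc}}$), gives the scaling identity
\[
\langle \Det Du_r, \phi\rangle \;=\; \frac{1}{r^m}\,\langle \Det Du,\, \phi((\cdot - x_0)/r)\rangle, \qquad \phi \in C_c^\infty(B_1).
\]
Splitting the right-hand side according to $\nu\,\mathcal L^m + \mu_s$ and invoking (i) and (ii), one obtains the first blow-up limit $\Det Du_r \rightharpoonup \nu(x_0)\,\mathcal L^m$ in $\mathcal D'(B_1)$ as $r \to 0$.

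For the matching identification, (iii) gives $Du_r \to Du(x_0)$ strongly in $L^p_{\mathrm{loc}}$, whence $\cof Du_r \to \cof Du(x_0)$ strongly in $L^{p/(m-1)}_{\mathrm{loc}}$, while (iv) gives $u_r \to L$ strongly in $L^{p^*}_{\mathrm{loc}}$. The exponent condition $\frac{1}{q} + \frac{m-1}{p} \le 1$ is precisely what is needed to pass to the limit in the product $(\cof Du_r)^T u_r$ entering the definition of $\Det$, yielding $\Det Du_r \rightharpoonup \Det DL = \det Du(x_0)\,\mathcal L^m$ in $\mathcal D'(B_1)$. Equating the two limits gives $\nu(x_0) = \det Du(x_0)$ at the chosen $x_0$; since this $x_0$ is $\mathcal L^m$-generic, the theorem follows.

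The hardest step is the stability of $\Det Du_r$ under the blow-up: pointwise approximate differentiability alone only provides convergence of $u_r$ in measure, so one must invoke the quantitative Calder\'on--Zygmund $L^{p^*}$-differentiability of Sobolev maps. In the borderline regime $p^* < q$ a naive H\"older estimate does not place $(\cof Du_r)^T u_r$ in $L^1_{\mathrm{loc}}$, and one must use the Piola identity a second time to subtract the constant $u(x_0)$ under the divergence and work with $u_r - L$ in place of $u_r$, thereby avoiding any need for uniform $L^q$ control on the blow-ups themselves.
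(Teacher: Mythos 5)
First, a remark on scope: the paper does not prove this statement at all; it is quoted as an external black box from De Lellis--Ghiraldin and used in the proof of the convergence of the determinants, so your argument can only be compared with the original proof, not with anything internal to the paper.

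Your blow-up scheme is sound in its first half, but it has a genuine gap exactly in the regime where the theorem has new content. The scaling identity (with the Piola identity $\Dive\cof Du=0$, valid for $p\ge m-1$, used to discard the constant $u(x_0)$) and the limit $\Det Du_r \rightharpoonup \nu(x_0)\,\mathcal L^m$ obtained from the Besicovitch differentiation of the measure are correct. The problem is the second, ``identification'' limit. Subtracting the affine map $L$ under the divergence (again by Piola) produces the good term $\int \langle \cof Du_r, Du(x_0)\rangle \phi$, which converges to $m\det Du(x_0)\int\phi$, plus the remainder $\int_{B_1}(\cof Du_r)^T(u_r-L)\cdot\nabla\phi\,dy$, which you must show tends to $0$. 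The only convergences at your disposal are $\cof Du_r\to\cof Du(x_0)$ in $L^{p/(m-1)}$ and $u_r-L\to 0$ in $L^{p^*}$ (Calder\'on--Zygmund); H\"older then requires $\frac{m-1}{p}+\frac{1}{p^*}\le 1$, i.e.\ $p\ge \frac{m^2}{m+1}$, which is strictly stronger than $p\ge m-1$ and is \emph{not} implied by $\frac1q+\frac{m-1}{p}\le 1$ when $q>p^*$. Your proposed fix --- ``use Piola a second time and work with $u_r-L$'' --- does not change this exponent arithmetic: the same product term with the same exponents is what remains after the subtraction, and the $L^q$ hypothesis cannot be transferred to the blow-ups, since $\|u_r\|_{L^q(B_1)}=r^{-1-m/q}\|u-u(x_0)\|_{L^q(B_r(x_0))}$, which at a generic point is only $o(r^{-1})$ and in general unbounded. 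Hence your argument proves the theorem only for $p\ge m^2/(m+1)$, where it is essentially the classical result (M\"uller, Fonseca--Gangbo); the case $m-1\le p<m^2/(m+1)$ with $q>p^*$ --- precisely the case for which the $L^q$ assumption is needed even to define $\Det Du$, and the reason the theorem is worth citing --- is left open. The original proof does not run through this naive blow-up; it exploits the $L^q$ integrability through a different, non scale-invariant mechanism, and that is the ingredient your proposal is missing. A minor additional point: your condition (i) asks for Lebesgue points of $\det Du$, but for $p<m$ the pointwise determinant is not known a priori to be in $L^1_{\mathrm{loc}}$ (its local integrability is part of the conclusion), so that requirement should be dropped or replaced by approximate continuity of $Du$ at $x_0$.
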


\begin{proof}[Proof of Proposition \ref{detconv}]
First we note that we can apply Theorem 2 in \cite{HenaoMoraCorral2010} to obtain that the limit $u$ is one-to-one a.e. Now let $(u_n)_n$ be a minimizing sequence  for $E$ in $\mathcal{A}_{nc}^{axi}$. Since $\sup_n \int_\O H(\det Du_n) < +\infty$, using the De La Vallée Poussin criterion, we have that there exists $\theta \in L^1(\O)$ such that
\[ \det Du_n \rightharpoonup \theta \text{ in } L^1(\O). \]
Since $\det Du_n >0$ a.e.\ we obtain that $\theta \geq 0$ a.e. If $\theta$ were zero in a set $A$ of positive measure, then we would have (for a subsequence) $\det Du_n \rightarrow 0$ in $L^1(A)$ and almost everywhere in $A$. Because of the assumptions on $H$ we would have $H(\det Du_n) \rightarrow +\infty$ a.e.\ in $A$ and hence using Fatou's lemma we would obtain that $E(u_n)\rightarrow +\infty$, which is impossible. Hence $\theta>0$ a.e.\ in $\O$.
Now since $\mathcal{E}(u_n)=0$ for every $n$ we also have that $\Det Du_n=\det Du_n$ for every $n$, and hence
\[ \Det Du_n \rightharpoonup \theta \text{ in } \mathcal{D}'(\O).\]
But recall that
\[(\cof Du_n)^T.u_n=\frac{1}{r}(v_n\wedge \p_zv_n,0, -v_n\wedge \p_rv_n).\]
By using the same argument as in the end of the proof of Proposition \ref{cofconv}, we have that
\[(\cof Du_n)^T.u_n \rightharpoonup (\cof Du)^T.u \text{ in } L^2(K)\]
for every compact set $K\subset \O\setminus \{O_z\}$. Since $\Det Dw =\frac{1}{3}\dive[(\cof Dw)^T.w]$ for $w$ in $ W^{1,2}\cap L^\infty(\O,\R^3)$ we find that
\[\Det Du_n \rightharpoonup \Det Du \text{ in } \mathcal{D}'(\O \setminus \{O_z\}). \]
Thus $\Det Du=\theta$ on $ \O \setminus\{O_z\}$. But this means that $\Det Du$ is a Radon measure in $\O \setminus \{O_z\}$. We can then apply a result of De Lellis-Ghiraldin \cite{DeLellisGhiraldin} to state that the absolutely continuous part of the distributional Jacobian is equal to $\det Du$ a.e.\ in $\O \setminus \{O_z\}$:
\[\det Du =\theta >0 \text{ a.e.\ on }  \O \setminus \{O_z\}.\]
But since $\O\cap \{O_z\}$ has zero Lebesgue measure we find that $\theta=\det Du$ a.e.\ in $\O$ and
\[\det Du_n \rightharpoonup \det Du \text{ in } L^1(\O).\]
\end{proof}

\begin{proposition}\label{littlemain}
Let $\O\subset \R^3$ be a smooth bounded axisymmetric domain such that $$\inf_{(x,y,z)\in \O}\sqrt{x^2+y^2}>0.$$ Then
$\inf\{ E(w); w \in \mathcal{A}_{nc}^{axi}\}$ is attained.
\end{proposition}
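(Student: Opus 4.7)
The plan is to take a minimizing sequence $(u_n) \subset \mathcal{A}_{nc}^{axi}$ for $E$, extract a weak $W^{1,2}$ limit $u$, verify that $u \in \mathcal{A}_{nc}^{axi}$, and conclude by lower semicontinuity. The hypothesis $\inf_\O \sqrt{x^2+y^2} > 0$ plays a decisive role: if $r \geq r_0 > 0$ uniformly on $\O_0$, then the bound on $|Dv_n|\sqrt{r}$ in $L^2(\O_0)$ obtained inside the proof of Proposition \ref{cofconv} becomes a uniform bound on $|Dv_n|$ in $L^2(\O_0)$ with no compactness restriction, so the local conclusions of Propositions \ref{cofconv}--\ref{detconv} upgrade to global statements on the whole of $\O$.

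Concretely, I would extract a subsequence (not relabeled) with $u_n \rightharpoonup u$ in $W^{1,2}(\O,\R^3)$ and $u_n \to u$ strongly in every $L^q(\O)$, $q<\infty$ (Rellich, together with the uniform bound $\|u_n\|_\infty \leq M$). The observation above then yields $\cof Du_n \rightharpoonup \cof Du$ in $L^1(\O)$ and $\det Du_n \rightharpoonup \det Du$ in $L^1(\O)$, while Proposition \ref{detconv} guarantees that $u$ is one-to-one almost everywhere. The boundary datum, the $L^\infty$ constraint, axisymmetry, and almost-everywhere injectivity all pass to the limit directly, and the pointwise positivity $\det Du > 0$ a.e.\ follows from the reasoning inside the proof of Proposition \ref{detconv} (the weak $L^1$ limit $\theta$ of $\det Du_n$ was shown there to be strictly positive a.e.\ and to coincide with $\det Du$).

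The remaining and most delicate point is to show that the non-cavitation condition $\mathcal{E}(u) = 0$, i.e.\ $\det Du = \Det Du$, survives in the limit. For this I would pass to the limit in the identity $\Det Du_n = \tfrac{1}{3}\dive[(\cof Du_n)^T u_n]$: the weak $L^1$ convergence of $\cof Du_n$ combined with the strong $L^p$ convergence of $u_n$ gives weak $L^1$ convergence of $(\cof Du_n)^T u_n$ to $(\cof Du)^T u$, hence $\Det Du_n \to \Det Du$ distributionally on all of $\O$. Together with the equality $\Det Du_n = \det Du_n$ and the weak $L^1$ convergence $\det Du_n \rightharpoonup \det Du$, this forces $\Det Du = \det Du$, so $u \in \mathcal{A}_{nc}^{axi}$. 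Lower semicontinuity then closes the argument: $\int_\O |Du|^2$ is weakly $W^{1,2}$-lower semicontinuous, while convexity of $H$ combined with the weak $L^1$ convergence of the determinants (Ioffe's theorem) yields $\int_\O H(\det Du) \leq \liminf_n \int_\O H(\det Du_n)$, so $u$ minimizes $E$ in $\mathcal{A}_{nc}^{axi}$. The main obstacle is precisely this non-cavitation step, and it is at this point that the \emph{global} weak $L^1$ convergence of the cofactors (rather than the merely local statement of Proposition \ref{cofconv}) is essential.
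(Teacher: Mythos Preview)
Your overall strategy coincides with the paper's: extract a weak $W^{1,2}$ limit, use the hypothesis $r\geq r_0>0$ to upgrade Proposition~\ref{cofconv} to global weak $L^1$ convergence of the cofactors, invoke Proposition~\ref{detconv} for the determinants and injectivity, and conclude by lower semicontinuity. There is, however, one gap in the verification that $u\in\mathcal{A}_{nc}^{axi}$.

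You write ``the non-cavitation condition $\mathcal{E}(u)=0$, i.e.\ $\det Du=\Det Du$'', and your argument then establishes only the identity $\Det Du=\det Du$. But the paper only records the implication $\mathcal{E}(u)=0\Rightarrow \Det Du=\det Du$ (Proposition~7.1 of \cite{HenaoMoraCorral2012}); the converse is not claimed and is in general strictly weaker, since $\mathcal{E}$ is defined as a supremum over test functions $f(x,y)$ depending on \emph{both} variables, whereas $\Det Du=\det Du$ tests only against functions of $x$. So as written you have not shown $u\in\mathcal{A}_{nc}$.

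The repair is immediate with the tools you already have, and is exactly what the paper does: for each $f\in C^\infty_c(\O\times\R^3,\R^3)$ with $\|f\|_\infty\leq 1$, pass to the limit directly in
\[
\mathcal{E}_{u_n}(f)=\int_\O \langle\cof Du_n,\,D_xf(x,u_n(x))\rangle+\det Du_n\,\dive_y f(x,u_n(x))\,dx.
\]
The first factor in each product converges weakly in $L^1(\O)$ (your global cofactor/determinant convergence), while $D_xf(\cdot,u_n(\cdot))$ and $\dive_yf(\cdot,u_n(\cdot))$ are uniformly bounded and converge a.e.\ since $u_n\to u$ a.e.; Lemma~\ref{Egoroff} then gives $\mathcal{E}_{u_n}(f)\to\mathcal{E}_u(f)$. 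Since $\mathcal{E}_{u_n}(f)=0$ for every $n$, one obtains $\mathcal{E}_u(f)=0$ for every admissible $f$, hence $\mathcal{E}(u)=0$. In short, apply the product lemma to the \emph{full} family of test functions defining $\mathcal{E}$, not only to those implicit in the distributional Jacobian.
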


\begin{proof}
Let $(u_n)_n$ be a minimizing sequence for $E$ in $\mathcal{A}_{nc}^{axi}$, we have seen that (up to a subsequence) $u_n \rightharpoonup u$ in $W^{1,2}$ for some $u \in W^{1,2}(\O,\R^3)$. By applying the previous propositions we have:
\[ \det Du_n \rightharpoonup \det Du \text{ in } L^1(\O)\]
\[\det Du>0 \text{ almost everywhere in } \O \]
\[u \text{ is one-to-one almost everywhere in } \O.\]
The weak $L^1$ convergence of the determinant implies
\begin{equation}
E(u)\leq \liminf_{n\rightarrow +\infty} E(u_n).
\end{equation}
The only thing which remains to check is that $u\in \mathcal{A}_{nc}^{axi}$. Since (up to a subsequence) we have pointwise convergence, it is true that $u$ is axisymmetric. We now prove that $\mathcal{E}(u)=0$. We proceed as in the proof of Theorem 3 of \cite{HenaoMoraCorral2010}. Let $f\in C^\infty_c(\O\times \R^3,\R^3)$ satisfy $\|f\|_\infty \leq 1$. Since we have that $\cof Du_n \rightharpoonup \cof Du$ in $L^1(\supp f)$ and $\det Du_n \rightharpoonup \det Du$ in $L^1(\supp f)$ we can apply Lemma \ref{Egoroff} below to prove that
\begin{eqnarray}
 \lim_{n\rightarrow +\infty} \mathcal{E}_{u_n}(f)&=&\int_\O \langle\cof D u(x), D_xf(x,u(x))\rangle+\det D u(x)\dive_yf(x,u(x)) dx \nonumber \\
 &=&\mathcal{E}_u(f).\nonumber
\end{eqnarray}
But since $\mathcal{E}_{u_n}(f)=0$ for all $n$ we obtain $\mathcal{E}_u(f)=0$. This is valid for all $f\in  C^\infty_c(\O\times \R^3,\R^3)$ satisfying $\|f\|_\infty \leq 1$ and hence $\mathcal{E}(u)=0$. In other words we do have that $u$ is in $\mathcal{A}_{nc}^{axi}$ and hence $u$ minimizes the energy $E$ in that space.
\end{proof}

In the previous proof we have used the following:

\begin{lemma}\label{Egoroff}[cf.\ Lemma 6.7 in \cite{SivaloganathanSpector} ]
Let $\psi_n\in L^\infty(\O)$ and $\theta_n \in L^1(\O)$ which satisfy
\[\psi_n \rightarrow \psi \text{ pointwise a.e. }\]
\[\theta_n \rightharpoonup \theta \text{ in } L^1(\O)\]
with $\|\psi_n\|_{L^\infty}\leq C$ for some $C>0, \theta \in L^1(\O)$ and $\psi \in L^\infty(\O)$. Then
\[\theta_n\psi_n \rightharpoonup \theta \psi \text{ in } L^1(\O).\]
\end{lemma}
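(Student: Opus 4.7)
The plan is to combine Egorov's theorem with the equi-integrability that is furnished by weak $L^1$ convergence. Testing against an arbitrary $\phi \in L^\infty(\O)$, it suffices to show that $\int_\O \theta_n \psi_n \phi\,dx \to \int_\O \theta \psi \phi\,dx$. I would split the difference as
\begin{equation}\nonumber
\theta_n \psi_n \phi - \theta \psi \phi = \theta_n (\psi_n - \psi)\phi + (\theta_n - \theta)\psi \phi,
\end{equation}
and handle the two pieces separately. The second term is the easy one: $\psi \phi \in L^\infty(\O)$ is a single fixed test function, so $\int_\O (\theta_n - \theta) \psi \phi\,dx \to 0$ follows immediately from the assumption $\theta_n \rightharpoonup \theta$ in $L^1(\O)$.

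For the first term the argument is as follows. By the Dunford--Pettis theorem, weak $L^1$ convergence of $(\theta_n)$ is equivalent to $(\theta_n)$ being equi-integrable. Fix $\varepsilon > 0$: equi-integrability yields $\delta > 0$ such that $\sup_n \int_F |\theta_n|\,dx < \varepsilon$ for every measurable $F \subset \O$ with $|F| < \delta$. Since $\O$ is bounded and $\psi_n \to \psi$ pointwise a.e., Egorov's theorem produces a measurable set $A \subset \O$ with $|\O \setminus A| < \delta$ on which the convergence is uniform.

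I would then split $\int_\O \theta_n(\psi_n - \psi)\phi\,dx$ over $A$ and $\O \setminus A$. On $A$, the integrand is bounded in absolute value by $\|\phi\|_\infty \|\psi_n - \psi\|_{L^\infty(A)} |\theta_n|$, and since $\|\theta_n\|_{L^1(\O)}$ is bounded (weak convergence implies norm-boundedness) and $\|\psi_n - \psi\|_{L^\infty(A)} \to 0$, this contribution tends to $0$. On $\O \setminus A$, the uniform bound $|\psi_n - \psi| \leq 2C$ combined with the equi-integrability estimate gives
\begin{equation}\nonumber
\left| \int_{\O \setminus A} \theta_n(\psi_n - \psi)\phi\,dx \right| \leq 2C \|\phi\|_\infty \int_{\O\setminus A}|\theta_n|\,dx < 2C\|\phi\|_\infty \varepsilon.
\end{equation}
Taking $\limsup_{n\to\infty}$ and then sending $\varepsilon \to 0$ yields the conclusion.

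There is no real obstacle here; the only point requiring a little care is the application of Dunford--Pettis to obtain equi-integrability of the whole sequence from its weak $L^1$ convergence, and the use of Egorov (valid because $|\O| < \infty$). Both are classical, so the proof reduces to the three-line splitting argument above.
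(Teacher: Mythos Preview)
Your argument is correct and is the standard proof: split as $\theta_n\psi_n-\theta\psi=\theta_n(\psi_n-\psi)+(\theta_n-\theta)\psi$, handle the second term by weak $L^1$ convergence against the fixed $L^\infty$ function $\psi\phi$, and control the first term by combining the equi-integrability of $(\theta_n)$ (Dunford--Pettis) with Egorov's theorem on the finite-measure set $\O$. One small remark: you invoke $|\psi_n-\psi|\leq 2C$ on $\O\setminus A$, which uses $\|\psi\|_{L^\infty}\leq C$; this is not stated among the hypotheses but follows immediately from $\psi_n\to\psi$ a.e.\ and $\|\psi_n\|_{L^\infty}\leq C$.

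As for comparison with the paper: the paper does not actually supply a proof of this lemma, it merely states it and refers to Lemma~6.7 in Sivaloganathan--Spector. The label \texttt{Egoroff} already signals that the intended argument is exactly the Egorov-based one you wrote down, so your proof is in line with what the authors have in mind.
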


The proof of Theorem \ref{main} follows from Proposition \ref{cofconv}, \ref{detconv} and \ref{littlemain}. \\

With the same method we used for Problem 1 we can treat Problems 2,1' and 2', Once we have the weak $L^1$ convergence of $\cof Du_n$ (given by Proposition \ref{cofconv}, which remains valid for the other problems) we can apply \cite[Theorem~3]{HenaoMoraCorral2010} to obtain the lower semicontinuity of the corresponding energy (see also \cite[Theorem~4.2]{MullerSpector1995}). However, note that the proof of Proposition \ref{detconv} does not require the equi-integrability of the cofactors.


\section{Minimizing sequences in $\mathcal{A}_{nc}^{axi}$ when the domain $\O$ contains the axis of rotation}

In the case where $\O$ is an axisymmetric domain such that $\inf_{(x,y,z)\in \O} \sqrt{x^2+y^2}>0$ we have seen that the main ingredient is to establish that if $(u_n)_n$ is a minimizing sequence for $E$ in $\mathcal{A}_{nc}^{axi}$ then
\[ \cof Du_n \rightharpoonup \cof Du \ \ \text{in } L^1(K) \ \ \text{ for all compact sets } K\subset \O.\]
As shown by the Conti-De Lellis' example, a priori this is not true when we consider an axisymmetric domain such that $\inf_{(x,y,z)\in \O}\sqrt{x^2+y^2}=0$. The energy could be concentrating on a lower dimensional set, which implies that minimizing sequences do not converge strongly in $W^{1,2}(\O,\R^3)$ and that the cofactors of this sequence are not equi-integrable. The aim of this section is to (partially) describe this concentration phenomenon. A first approach is to describe the \emph{defect measure}, that is the measure $\mu$ such that
\[ |Du_n|^2 \rightharpoonup |Du|^2+\mu \ \ \text{weakly in } \mathcal{M}(\O).\]
We observe that such a measure exists (since $|Du_n|^2$ is bounded in $L^1(\O)$) and that $u_n\rightarrow u$ strongly in $W^{1,2}$ if and only if $\mu=0$ in $\O$. A second approach consists in describing the lack of compactness of the problem through the theory of \emph{Cartesian currents} (see, e.g.\ \cite{CartesianCurrentsI}, \cite{Cartesiancurrents2} for previous applications of this theory to problems in elasticity). We now recall some definitions and properties of currents. In this section $N$ is an integer $N\geq 1$.

\begin{definition}
Let $U\subset \R^N$ be an open set, let $k\in \mathbb{N}$. We say that $T$ is a $k$-dimensional current in $U$ if $T$ is a linear form on the set of $C^\infty$ $k$-differential forms with compact support in $U$, denoted by $\mathcal{D}^k(U)$. We denote such a current by $T\in \mathcal{D}_k(U)$. \\
The boundary of a $k$-dimensional current is the $(k-1)$-dimensional current defined by
\[ \langle \p T,\omega \rangle = \langle T, d \omega \rangle \text{ for every } \omega \in \mathcal{D}^{k-1}(U) \]
and the boundary of a $0$-dimensional current is set to be equal to $0$. \\
The mass of a current $T$ is
\[ \mathbb{M}(T):=\sup\{ \langle T, \omega \rangle; \omega \in \mathcal{D}^k(U), |\omega| \leq 1 \}.\]
\end{definition}

\begin{definition}
A current is said to be normal if $T$ and $\p T$ have finite mass. A current is rectifiable if it can be written as
\[ \langle T,\omega \rangle =\int_{\mathcal{M}} \langle \omega(x),\tau(x) \rangle \theta(x)d\mathcal{H}^k(x) \]
where
\begin{itemize}
\item[*] $\mathcal{M}$ is a $k$-rectifiable set
\item[*] $\tau$ is a unit $k$-dimensional vector that spans $\text{Tan}(E,x)$ for $\mathcal{H}^{k}$-a.e.\ $x\in E$, (such a $\tau$ is called an orientation).
\item[*] $\theta$ is a real function, called the multiplicity, which satisfies $\int_E |\theta |d\mathcal{H}^{k} <+\infty$.
If $T$ is a rectifiable current we write $T=[E,\theta,\tau]$.
\end{itemize}
\end{definition}

\begin{definition}
A current is an integer multiplicity rectifiable current if it is a rectifiable current such that the multiplicity $\theta$ takes integer values.
\end{definition}

An example of an integer multiplicity current is the one given by the integration on the graph of a function. Let $\O \subset \R^n$ be a smooth bounded domain. We denote by $\mathcal{A}^1(\O,\R^n)$ the class of vector-valued maps $u:\O\rightarrow \R^n$ that are a.e.\ approximately differentiable and such that all the minors of the Jacobian matrix $Du$ are integrable. For $u \in \mathcal{A}^1$ we let
\[ M(Du)(x)=(e_1,Du(x).e_1)\wedge ...\wedge (e_n,Du(x).e_n) \]
where $\{e_i\}_{i=1,...,n}$ is the standard basis of $\R^n$, and here $\wedge$ denotes the exterior product (for the definition and properties we refer, e.g.\ to Section 2.1 of \cite{CartesianCurrentsI}). We also let
\[ \mathcal{G}_u=\{(x,u(x));x\in \mathcal{A}_u \} \]
where
\[\mathcal{A}_u=\{x \in \O; u \text{ is approximately differentiable in } x \}. \]
For $u \in \mathcal{A}^1$ we can define a current $G_u \in \mathcal{D}_n(\R^n\times\R^n)$ by
\[ \langle G_u, \omega \rangle = \int_{\R^n \times \R^n} \langle \xi, \omega \rangle d\mathcal{H}^n \lfloor{\mathcal{G}_u}\]
with $\xi=\frac{M(Du)(x)}{|MDu(x)|}$. We can show that $\mathcal{G}_u$ is a countably rectifiable set and then $G_u$ is an integer multiplicity rectifiable current. Besides the mass of this current is equal to
\[\mathbb{M}(G_u)=\mathcal{H}^n(\mathcal{G}_u)=\int_\O|MDu|dx.\]

\textbf{Remark}: If $u\in W^{1,n-1}(\O,\R^n)$ with $\det Du\in L^1(\O)$ then  $u\in \mathcal{A}^1$. \\

We now introduce the concept of stratification of differential forms and of currents.
\begin{definition}
Let $\omega$ be an $n$-differential form on $\R^n\times \R^n$, we can write \[\omega= \sum\limits_{\substack{\alpha, \beta  \\ |\alpha|+|\beta|=n}} f_{\alpha,\beta}dx_\alpha\wedge dy_\beta\] with $\alpha$ and $\beta$ some multi-indices. For every integer $h$ we then define
\[ \omega^{(h)}=\sum\limits_{\substack{|\alpha|+|\beta|=n \\|\beta|=h}} f_{\alpha,\beta}dx_\alpha \wedge dy_\beta. \]
Let $T$ be a current on $\R^n_x \times \R^n_y$ we define the $h$-stratum of $T$ by
\[ \langle (T)_h, \omega \rangle=\langle T, \omega^{(h)} \rangle. \]
\end{definition}

We can now make a link between the surface energy $\mathcal{E}(u)$ defined for $u$ in $W^{1,n-1}\cap L^\infty(\O,\R^n)$ such that $\det Du \in L^1(\O)$ and the theory of currents. Let $f\in C^\infty_c(\O\times\R^n,\R^n)$ we define an $(n-1)$-differential form by
\[ \omega_f(x,y)=\sum_{j=1}^n(-1)^{j-1}f^j(x,y)\hat{dy^j} \]
where $\hat{dy^j}=dy^1\wedge...\wedge dy^{j-1}\wedge dy^{j+1}\wedge... \wedge dy^n.$ Note that $|\omega_f(x,y)|=|f(x,y)|$ for all $x,y$ in $\O\times \R^3$. We can check that \[\mathcal{E}_u(f)=\langle \p G_u, \omega_f \rangle\] and since $\omega_f$ is a $(n-1)$-vertical form (i.e.\ a form which can be written as $\omega^{(n-1)}$) we have that
\[\mathcal{E}(u)=\mathbb{M}((\p G_u)_{n-1}).\]
We recall that if $u\in W^{1,p}$ then $(\p G_u)_h=0$ for all $h\leq p-1$ (this can be shown by approximation by smooth functions).
Thus we have that
\[ \mathcal{E}(u)=\mathbb{M}(\p G_u), \]
in particular if $u\in \mathcal{A}_{nc}$ then $\p G_u=0$.

\begin{definition}
Let $\O\subset \R^n$ be a smooth open bounded set in $\R^n$. We say that $T$ is a Cartesian current in $\O\times \R^n$ if
\begin{itemize}
\item[i)] $T$ is an integer multiplicity rectifiable current $T=[\mathcal{M},\theta, T]$;
\item[ii)] $\mathbb{M}(T) <+\infty$ and
\[ \| T\|_1:=\sup\{ \langle T,|y|\varphi(x,y)dx \rangle; \varphi \in C^1_c(\O \times\R^n), |\varphi|\leq 1\} <+\infty ;\]

\item[iii)] $T_\lfloor{dx_1\wedge...\wedge dx_n}$ is a positive Radon measure in $\O\times \R^n$ and $\pi_\sharp T=\llbracket \O \rrbracket $, with $\pi:\R^n_x\times\R^n_y \rightarrow \R^n_x$, $(x,y) \mapsto x$;
\item[iv)] $\p T_{\lfloor{\O\times \R^n}}=0$.
\end{itemize}
\end{definition}
\textbf{Remarks}: \begin{itemize}
\item[1)] If $T=G_u$ then $\|T\|_1=\|u\|_{L^1}$. Hence if $u\in \mathcal{A}_{nc}$, then $G_u$ is a Cartesian current.
\item[2)] For the definition of the push-forward of a current by a smooth function we refer to p.\ 132 of \cite{CartesianCurrentsI}. We have denoted by $\llbracket \O \rrbracket$ the current defined by integration on $\O$.
\end{itemize}
 The theory of Cartesian currents is well-suited for the weak convergence thanks to the following results.

\begin{theorem}\label{closureandstructure}[see \cite{CartesianCurrentsI}, \cite{Cartesiancurrents2}]
\begin{itemize}
\item[1)] (Th.1, p.386, of \cite{CartesianCurrentsI}) \\
The set of Cartesian currents $\text{cart}(\O\times \R^n)$ is closed under weak convergence in the sense of currents of sequences $(T_k)_k$ which satisfy that there exists some $C>0$ such that
    \[ \mathbb{M}(T_k)+\|T_k\|_1 \leq C \ \ \ \forall k .\]
\item[2)] (Th.1, p.392, of \cite{CartesianCurrentsI}) \\
For every $T$ in $\text{cart}(\O\times\R^n)$ there exists an a.e.\ approximately differentiable map $u_T:\O \rightarrow \R^n$ such that
    \[T=G_{u_T}+S_T \]
    where $G_{u_T}$ is the current given by the integration on the graph of $u_T$ and $S_T$ is a vertical current (here vertical means that $\pi_\sharp S =0$).

\item[3)] (Th.5, p.399 of \cite{CartesianCurrentsI})\\ If $T_k=G_{u_k}$ satisfies $T_k \rightharpoonup T:=G_{u_T}+S_T$ in the sense of currents and $\mathbb{M}(T_k)+\|T_k\|_1 \leq C$ for all $k$ and for some $C>0$, then
\[u_{T_k} \rightharpoonup u_T \text{ in } BV(\O) .\]
\end{itemize}
\end{theorem}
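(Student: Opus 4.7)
The plan is to establish the three items in the stated order, each relying on classical tools from the theory of integer-multiplicity rectifiable currents plus the previously established items.

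For item 1, I would invoke the Federer--Fleming compactness and closure theorem. Given a sequence $(T_k)$ of Cartesian currents with $\mathbb{M}(T_k)+\|T_k\|_1\leq C$, condition (iv) forces $\partial T_k$ to be supported on $\partial\Omega\times\R^n$, so the restrictions $T_k\lfloor(\Omega\times\R^n)$ are normal with uniformly bounded mass and boundary mass. Extracting a weak subsequential limit $T$, the closure theorem delivers integer-multiplicity rectifiability of $T$ together with $\partial T=0$ in $\Omega\times\R^n$, while lower semicontinuity of $\mathbb{M}$ and of $\|\cdot\|_1$ under weak convergence gives the required finiteness bounds. Positivity of the horizontal stratum $T\lfloor(dx_1\wedge\cdots\wedge dx_n)$ and the identity $\pi_\sharp T=\llbracket\Omega\rrbracket$ both pass to the limit by testing against smooth nonnegative horizontal forms of the shape $\varphi(x)\,dx$ with $\varphi \in C^\infty_c(\Omega)$.

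For item 2, the structure theorem, the main tool is slicing of $T$ along the projection $\pi:\Omega\times\R^n\to\Omega$. Because slicing commutes with the boundary operator and $\partial T=0$ in the cylinder, for $\mathcal{L}^n$-a.e.\ $x\in\Omega$ the slice $\langle T,\pi,x\rangle$ is a $0$-dimensional integer-multiplicity current with trivial boundary, hence a finite signed combination $\sum_i\theta_i(x)\delta_{y_i(x)}$ with $\theta_i(x)\in\mathbb{Z}$. The identity $\pi_\sharp T=\llbracket\Omega\rrbracket$ forces $\sum_i\theta_i(x)=1$ for a.e.\ $x$, while positivity of the horizontal stratum rules out the cancellations that would allow all $\theta_i(x)$ to be negative. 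This means at least one $y_i(x)$ carries a strictly positive multiplicity, and by a measurable selection (using the rectifiability of the underlying carrying set $\mathcal M$) one picks $u_T(x)$ to be such a distinguished point of unit multiplicity. The current $G_{u_T}$ then reproduces the horizontal part of $T$, and $S_T:=T-G_{u_T}$ has vanishing push-forward under $\pi$, which is the precise sense in which it is vertical.

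For item 3, the inequality $\int_\Omega|Du_k|\leq\int_\Omega|M(Du_k)|=\mathbb{M}(G_{u_k})\leq C$, combined with $\|u_k\|_{L^1(\Omega)}=\|G_{u_k}\|_1\leq C$, yields a uniform bound in $BV(\Omega,\R^n)$. Up to a subsequence $u_k\rightharpoonup\tilde u$ in $BV$ and pointwise a.e. To identify $\tilde u=u_T$, I would use continuity of slices under weak convergence of normal currents (after passing to a further subsequence for $\mathcal{L}^n$-a.e.\ $x$): the slice $\langle G_{u_k},\pi,x\rangle=\delta_{(x,u_k(x))}$ converges to $\langle T,\pi,x\rangle$, and the unit-multiplicity point extracted in item 2 must be $\tilde u(x)$. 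The main obstacle is item 2: the measurable selection of $u_T$ from the slice decomposition, together with the verification that the resulting $u_T$ is approximately differentiable so that $G_{u_T}$ is a graph current in the sense of the preliminaries, rests on delicate geometric-measure-theoretic arguments, which in \cite{CartesianCurrentsI} occupy substantial portions of the treatise. Items 1 and 3 by comparison reduce to the standard compactness, slicing, and lower-semicontinuity principles once item 2 provides the canonical decomposition.
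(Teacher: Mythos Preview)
The paper does not prove this theorem; it is stated with explicit page references to \cite{CartesianCurrentsI} and \cite{Cartesiancurrents2} and is used as a black box. There is therefore no ``paper's own proof'' to compare against: the authors simply quote the closure, structure, and convergence theorems from the Giaquinta--Modica--Sou\v{c}ek treatise and apply them in Proposition~\ref{defectcurrent}.

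Your sketch is a reasonable high-level outline of how those results are established in the cited monograph, and your assessment that item~2 is the substantial one is accurate. But since the paper treats this as a quoted result, the appropriate ``proof'' here is simply the citation; no argument is expected or supplied.
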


We can now prove that
\begin{proposition}\label{defectcurrent}
Let $\O\subset \R^3$ be a smooth bounded domain. Let $(u_n)_n$ be a minimizing sequence for $E$ in $\mathcal{A}_{nc}$. We suppose that $u_n\rightharpoonup u$ in $W^{1,2}(\O,\R^3)$ then
\[G_{u_n} \rightharpoonup G_u+S \]
with $\p G_u=-\p S$, and $S$ is a completely vertical integer multiplicity rectifiable current  with boundary $\p S$. Besides we have
\[ \langle S, \omega \rangle=0 \ \forall \ \omega \text{ such that } \omega=\omega^{(2)} \text{ and } \omega^{(2)}=d_y\eta\]
for $\eta$ a $2$-form with compact support (compare Proposition 1, p.149 of \cite{Cartesiancurrents2}).
\end{proposition}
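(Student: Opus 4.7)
The plan is to apply the closure and structure theorems for Cartesian currents (Theorem \ref{closureandstructure}) to the sequence of graphs $G_{u_n}$, and then to transfer the condition $\p G_{u_n}=0$ to the limit. This last identity holds for each $n$ because $u_n\in\mathcal{A}_{nc}$ forces $\mathcal{E}(u_n)=0$, which in turn gives $\p G_{u_n}=0$ via the identity $\mathcal{E}(u)=\mathbb{M}(\p G_u)$ recalled just before the proposition.

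First I would verify the a priori bounds $\mathbb{M}(G_{u_n})+\|G_{u_n}\|_1\le C$ needed to invoke Theorem \ref{closureandstructure}(1). The uniform $L^\infty$ bound on $u_n$ gives $\|G_{u_n}\|_1\le CM|\O|$, and the mass $\mathbb{M}(G_{u_n})=\int_\O|M(Du_n)|$ involves the $1$-, $2$-, and $3$-minors of $Du_n$: the energy bound yields $\|Du_n\|_{L^2}\le C$, hence $\|\cof Du_n\|_{L^1}\le C\|Du_n\|_{L^2}^2\le C$, while the coercivity $H(t)/t\to+\infty$ combined with $\int_\O H(\det Du_n)\le C$ gives, by De La Vallée Poussin, the equi-integrability (and in particular the $L^1$-boundedness) of $\det Du_n$. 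Therefore, up to a subsequence, $G_{u_n}\rightharpoonup T\in\operatorname{cart}(\O\times\R^3)$.

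Theorem \ref{closureandstructure}(2) then decomposes $T=G_{u_T}+S$ with $u_T$ approximately differentiable and $S$ vertical, while Theorem \ref{closureandstructure}(3) gives $u_n\rightharpoonup u_T$ in $BV(\O)$. The assumption $u_n\rightharpoonup u$ in $W^{1,2}$ forces $u_n\to u$ in $L^1$, hence weakly in $BV$, so uniqueness identifies $u_T=u$ and $T=G_u+S$; integer multiplicity of $S$ comes from the structure theorem together with the fact that $G_u$ is already integer-multiplicity rectifiable. The boundary relation $\p G_u=-\p S$ is obtained by passing $\p G_{u_n}=0$ to the weak limit, using continuity of $\p$ under weak convergence of currents.

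For the last property let $\eta$ be a compactly supported $2$-form with $\omega:=d_y\eta=\omega^{(2)}$, so that only its $(1,1)$-stratum is effectively relevant. Since $\p T=0$ and $\eta$ is compactly supported, one has $\langle T,d_y\eta\rangle=-\langle T,d_x\eta\rangle$, and the same identity holds with $G_{u_n}$ in place of $T$. The strategy then is to follow the fibre-wise scheme of Proposition~1, p.~149 of \cite{Cartesiancurrents2}: $S$ being vertical, it decomposes into fibres over its $x$-projection; the relation $\p S=-\p G_u$, combined with the fact that $\p G_u$ is carried by the cavitation/dipole set, shows that each such fibre is a closed $2$-cycle in $\R^3_y$; and Stokes' theorem in $\R^3_y$ applied to the compactly supported $\eta$ then yields $\langle S,d_y\eta\rangle=0$. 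The main obstacle is to make this fibre decomposition rigorous for a merely rectifiable (non-smooth) current $S$ and to verify that its fibres are indeed $2$-cycles; once this is achieved, complete verticality of $S$ is an immediate byproduct. The remaining steps are essentially routine applications of the quoted Cartesian-current machinery.
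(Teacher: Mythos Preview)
Your argument through the identification $u_T=u$ and the relation $\partial G_u=-\partial S$ is correct and matches the paper. The divergence is in how you treat the complete verticality of $S$ and the final property $\langle S,d_y\eta\rangle=0$.

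You propose a fibre-wise decomposition of $S$ followed by Stokes' theorem in each vertical fibre, and you acknowledge that making this rigorous for a merely rectifiable $S$ is the ``main obstacle''; you then hope to recover complete verticality as a byproduct. The paper avoids this machinery entirely, but it reverses the logical order: complete verticality is an \emph{input} to a direct two-line computation, not a consequence of it. First the paper establishes that $\partial S=-\partial G_u$ is completely vertical: for any $2$-form $\omega=\omega^{(0)}+\omega^{(1)}$ one has $\langle\partial G_u,\omega\rangle=0$, because $d\omega$ involves only zeroth- and first-order minors of $Du$, so a smooth $W^{1,2}$-approximation $u_\e\to u$ gives $\langle G_{u_\e},d\omega\rangle\to\langle G_u,d\omega\rangle$ while $\langle\partial G_{u_\e},\omega\rangle=0$. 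Separately, $S_{(1)}=0$ follows directly from $Du_n\rightharpoonup Du$ in $L^2$ together with $u_n\to u$ a.e.\ (via Lemma~\ref{Egoroff}), since the $(1)$-stratum of $G_{u_n}$ tests only the entries of $Du_n$. With these two facts in hand, for a $(1,1)$-form $\eta=\eta_{ij}\,dx^i\wedge dy^j$ one simply writes
\[
\langle S,d_y\eta\rangle
=\langle S,d\eta\rangle-\langle S,d_x\eta\rangle
=\langle\partial S,\eta\rangle-\langle S,\partial_{x_m}\eta_{ij}\,dx^m\wedge dx^i\wedge dy^j\rangle
=0-0,
\]
the first term vanishing because $\eta$ is of type $(1)$ and $(\partial S)_{(1)}=0$, the second because the form is of type $(1)$ and $S_{(1)}=0$. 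No slicing or fibre decomposition is needed; your obstacle disappears once the order of the argument is inverted.
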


\textbf{Remark}: $S$ completely vertical means that $S_{(0)}=S_{(1)}=S_{(3)}=0$ whereas $\p S$ completely vertical means $\langle \p S, \omega \rangle=0$ for every $\omega= \omega^{(1)}+\omega^{(2)} \in \mathcal{D}(\O\times\R^n)$.

\begin{proof}

Since $(u_n)_n$ is bounded in $W^{1,2}(\O,\R^3)$  and $\E(u_n)=0$ we have that $\mathbb{M}(G_{u_n})$ and $\mathbb{M}(\p G_{u_n})$ are uniformly bounded. Then from the compactness theorem for integral currents we have that $G_{u_n} \rightharpoonup T$ for some integral current $T$. But using that $\|G_{u_n}\|_1=\|u_n\|_{L^1}$ is also uniformly bounded we can apply 1) of Theorem \ref{closureandstructure} to obtain that $T$ is in $\text{cart} (\O\times \R^n)$. Now we apply the theorem of structure (2) and the convergence theorem (3) of Theorem \ref{closureandstructure} to decompose $T$ as $T=G_{u_T}+S$ for some $u_T$ in $BV(\O)$. But since we assumed that $u_n \rightharpoonup u$ in $W^{1,2}$ we have, from point 3) of Theorem \ref{closureandstructure}, that $u_T=u$. \\

Now let $\omega=\omega^{(0)}+\omega^{(1)}=\omega_{ij}dx_i\wedge dx_j+\beta_{ij}dx_i \wedge dy_j$, since $\langle G_{u_\e}, d\omega \rangle \rightharpoonup \langle G_u, d\omega \rangle$ for $(u_\e)_\e$ a sequence of smooth functions which approximate $u$ in $W^{1,2}(\O,\R^3)$, we obtain that $\langle \p G_u, \omega \rangle=0$. This proves that $\langle \p S, \omega \rangle =0$. \\
We then have
\begin{eqnarray}
\langle S, d_y (\omega_{ij}(x,y)dx^i\wedge dy^j)\rangle &= &\langle S, d[w(x,y)dx^i\wedge dy^j]-\langle S,d_x(\omega(x,y))dx^i\wedge dy^j \rangle \nonumber \\
&=& \langle \p S,\omega(x,y) dx^i\wedge dy^j \rangle-\langle S,\omega_{x_m} dx^m \wedge dx^i\wedge dy^j \rangle \nonumber \\
&=& 0 \nonumber
\end{eqnarray}
because $\p S$ and $S$ are completely vertical.
\end{proof}

We want to emphasize that if $u_n \rightharpoonup u$ in $W^{1,2}(\O,\R^3)$ and $\cof Du_n \rightharpoonup \cof Du$ weakly in $L^1(\O)$ then we have $G_{u_n}\rightharpoonup G_{u}$ in the sense of currents and then there is no defect current $S$ in Proposition \ref{defectcurrent}. This can be seen using Proposition 2 p.232 of \cite{CartesianCurrentsI}, and the proof of this fact relies on Lemma \ref{Egoroff}. This shows once more the important role of the equi-integrability of the cofactors of a minimizing sequence in these variational problems.
\begin{proposition}\label{currentconv}
Let $(u_n)_n$ be a minimizing sequence  for $E$ in $\mathcal{A}$. We suppose that $u_n \rightharpoonup u$ in $W^{1,2}(\O,\R^3)$.
Assume furthermore that $\cof Du_n \rightharpoonup \cof Du$ in $L^1(\O)$ then $G_{u_n}\rightharpoonup G_u$ in the sense of currents.
\end{proposition}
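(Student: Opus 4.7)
The plan is to verify $\langle G_{u_n}, \omega\rangle \to \langle G_u, \omega\rangle$ for every test form $\omega \in \mathcal{D}^3(\O \times \R^3)$ by decomposing $\omega = \sum_{h=0}^{3} \omega^{(h)}$ into strata and treating each stratum separately. A term of the form $f(x,y)\, dx^\alpha \wedge dy^\beta$ with $|\beta|=h$ pairs with $G_{u_n}$ as $\int_\O f(x, u_n(x))\, M_{\alpha\beta}(Du_n)(x)\, dx$, where $M_{\alpha\beta}(Du_n)$ is a signed minor of order $|\beta|$: the constant $1$ for $h=0$, an entry of $Du_n$ for $h=1$, an entry of $\cof Du_n$ for $h=2$, and $\det Du_n$ for $h=3$. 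Up to a subsequence, Rellich-Kondrachov gives $u_n \to u$ pointwise a.e., so $\psi_n(x) := f(x, u_n(x))$ converges pointwise a.e.\ to $\psi(x) := f(x, u(x))$ with uniform bound $\|\psi_n\|_\infty \le \|f\|_\infty$. This supplies the first ingredient of Lemma \ref{Egoroff} in every stratum.

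The strata $h=0,1,2$ are then immediate: $h=0$ is dominated convergence; for $h=1,2$ the relevant minor converges weakly in $L^1$ (by $Du_n \rightharpoonup Du$ in $L^2 \subset L^1$ for $h=1$ and by the standing hypothesis for $h=2$), and Lemma \ref{Egoroff} applied with $\psi_n$ and the minor yields weak $L^1$ convergence of the product, hence of the integral.

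The decisive stratum is $h=3$, which reduces to establishing $\det Du_n \rightharpoonup \det Du$ in $L^1(\O)$. The uniform energy bound $\sup_n \int_\O H(\det Du_n) < \infty$, together with the superlinear growth \eqref{propH} and the De La Vall\'ee Poussin criterion, yields equi-integrability of $(\det Du_n)_n$, so $\det Du_n \rightharpoonup \theta$ in $L^1(\O)$ along a subsequence. To identify $\theta$ with $\det Du$, invoke Lemma \ref{Egoroff} once more with $\psi_n = u_n^k$ ($L^\infty$-bounded by $M$, a.e.\ convergent to $u^k$) and $\theta_n = (\cof Du_n)_{i\ell}$ (weakly convergent in $L^1$ by hypothesis) to deduce $(\cof Du_n)^T u_n \rightharpoonup (\cof Du)^T u$ in $L^1(\O,\R^3)$, and take the distributional divergence to conclude $\Det Du_n \to \Det Du$ in $\mathcal{D}'(\O)$. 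In the non-cavitating regime one has $\Det Du_n = \det Du_n$, so the two distributional limits coincide and $\theta = \Det Du$ in $\mathcal{D}'(\O)$; the De Lellis-Ghiraldin theorem then identifies $\Det Du$ with $\det Du$ almost everywhere, closing the case $h=3$.

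The principal obstacle is precisely the identification $\theta = \det Du$ at the top stratum, which is also the essence of the proposition: the hypothesis $\cof Du_n \rightharpoonup \cof Du$ in $L^1$ is what prevents the Conti-De Lellis-type concentration of the top minor on a lower-dimensional set, forcing the vertical defect current $S$ of Proposition \ref{defectcurrent} to vanish.
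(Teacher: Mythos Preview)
Your overall strategy---stratify the test form and pass to the limit stratum by stratum using Lemma~\ref{Egoroff}---is exactly the approach the paper indicates (it cites Proposition~2, p.~232 of \cite{CartesianCurrentsI}, whose proof is precisely this stratum-by-stratum argument via the product lemma). The strata $h=0,1,2$ are handled correctly.

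The gap is at the top stratum. The proposition is stated for a minimizing sequence in $\mathcal{A}$, not in $\mathcal{A}_{nc}$; the hypothesis $\mathcal{E}(u_n)=0$ (equivalently $\Det Du_n=\det Du_n$) is therefore not available, and your identification of the weak $L^1$ limit $\theta$ with $\det Du$ breaks down at the line ``In the non-cavitating regime one has $\Det Du_n = \det Du_n$''. Without that equality you only obtain $\Det Du_n\to\Det Du$ in $\mathcal{D}'(\Omega)$ and $\det Du_n\rightharpoonup\theta$ in $L^1$, with no link between the two; the De~Lellis--Ghiraldin theorem is then of no help, since you never establish that $\Det Du$ coincides with the $L^1$ function $\theta$.

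The paper's route for this stratum is to invoke \cite[Theorem~3]{HenaoMoraCorral2010}, which yields $\det Du_n\rightharpoonup\det Du$ in $L^1(\Omega)$ directly from the equi-integrability of the cofactors (your hypothesis), the equi-integrability of the determinants (De~La~Vall\'ee~Poussin, as you note), and the a.e.\ injectivity coming from membership in $\mathcal{A}$---no non-cavitation assumption is needed. With that in hand, your Lemma~\ref{Egoroff} argument closes the $h=3$ case and the proof is complete. If instead you wish to keep your distributional-Jacobian argument, it is valid only once you restrict the statement to $\mathcal{A}_{nc}$ (which, incidentally, is the context of the surrounding discussion and of Proposition~\ref{defectcurrent}).
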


Note that Cartesian currents can be used to describe the problem of neo-Hookean materials in a non-cavitation setting (Problem 1 and Problem 1') but there also exists a class of currents adapted to the same problem where we allow for cavitation. This is the class of \emph{graphs} (cf.\ Definition 2, p.\ 385 in \cite{CartesianCurrentsI}). We have indicated at the end of Section 3 that we can solve Problem 2 (in axisymmetric domains such that $\inf_{(x,y,z)\in \O}\sqrt{x^2+y^2}>0$) by using the equi-integrability of the cofactors (given by Proposition \ref{cofconv}) and the weak $L^1$ continuity of the determinant (given by Theorem 3 in \cite{HenaoMoraCorral2010}). With the help of the currents theory we can also give an alternative proof of that fact. Before we recall the definition of the support of a current.
\begin{definition}
Let $T$ be a $k$-dimensional current in $\O\times \R^n$, the support of $T$ is the smallest closed set such that
$\langle T, \omega \rangle=0$ if the support of $\omega$ is contained in the complement of this closed set.
\[\supp T:= \cap \{ K\subset \O\times \R^3; K \text{ is relatively closed } in \ \O\times \R^n \]
\[\phantom{aaaaaaaaaaaaaaaaaaaaaaaaa} \text{ and } \langle T,\omega\rangle =0, \forall \omega \in \mathcal{D}^k(\O\times \R^n); \supp \omega \subset (\O\times \R^n)\setminus K \}.\]
\end{definition}

\begin{proposition}
Let $\O\subset \R^3$ be an axisymmetric domain such that $$\inf_{(x,y,z)\in \O}\sqrt{x^2+y^2}>0.$$ For $\lambda>0$ there exists a minimizer of $E(u)+\lambda\mathcal{E}(u)$ in the space $\mathcal{A}^{axi}$.
\end{proposition}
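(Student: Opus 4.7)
The plan is to mirror the proof of Proposition~\ref{littlemain}, replacing the vanishing of $\mathcal{E}$ by its weak lower semicontinuity. I would start with a minimizing sequence $(u_n)_n \subset \mathcal{A}^{axi}$ for $E+\lambda\mathcal{E}$. Since both functionals are nonnegative and the infimum is finite (take the axisymmetric diffeomorphism $g$ itself as a competitor), $E(u_n)$ and $\mathcal{E}(u_n)$ are each uniformly bounded. Combined with the $L^\infty$ bound built into $\mathcal{A}$, this yields a subsequence with $u_n\rightharpoonup u$ in $W^{1,2}(\O,\R^3)$ and, by Rellich--Kondrachov, $u_n\to u$ pointwise a.e.

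Next I would apply Proposition~\ref{cofconv}; its proof uses only the bound on $G(v_n)$ and the axisymmetry of $u_n$, both of which are available here. Because $\inf_\O \sqrt{x^2+y^2}>0$, the weight $r$ is bounded below on $\O_0$ by some $r_0>0$, so the bounds $|Dv_n|\sqrt{r}\in L^2(\O_0)$ and $v_1^n/\sqrt{r}\in L^2(\O_0)$ globalize to uniform $W^{1,2}(\O_0)$ bounds on $v_n$. Since $v_1^n/r\in L^\infty(\O_0)$, each entry of $\cof Du_n$ displayed in Section~3 is uniformly bounded in $L^2(\O)$. In particular $(\cof Du_n)_n$ is equi-integrable on all of $\O$ and $\cof Du_n \rightharpoonup \cof Du$ in $L^1(\O)$.

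With the cofactors equi-integrable and the surface energies $\mathcal{E}(u_n)$ uniformly bounded, I would invoke Theorem~3 of \cite{HenaoMoraCorral2010}, which simultaneously gives $\det Du_n \rightharpoonup \det Du$ in $L^1(\O)$ and $\mathcal{E}(u)\leq \liminf_n \mathcal{E}(u_n)$. Weak lower semicontinuity of $u\mapsto \int|Du|^2$ on $W^{1,2}$ is classical, and weak $L^1$ lower semicontinuity of $u\mapsto \int H(\det Du)$ follows from convexity and nonnegativity of $H$. Thus $E(u)+\lambda\mathcal{E}(u) \leq \liminf_n\bigl(E(u_n)+\lambda\mathcal{E}(u_n)\bigr)$, and it only remains to verify $u\in\mathcal{A}^{axi}$.

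For this, axisymmetry, the $L^\infty$ bound, and the boundary condition $u=g$ on $\p\O$ pass to the limit. Positivity $\det Du>0$ a.e.\ follows exactly as in the proof of Proposition~\ref{detconv}: if $\det Du$ vanished on a set of positive measure, Fatou applied to $H(\det Du_n)$ together with $H(s)\to+\infty$ as $s\to 0$ would contradict the bound on $E(u_n)$. Injectivity a.e.\ of $u$ is furnished by Theorem~2 of \cite{HenaoMoraCorral2010}, whose sole hypothesis beyond the setup is the equi-integrability of cofactors already secured above. The main obstacle is the bundled step in which the passage to the limit in $\mathcal{E}$, the weak $L^1$ convergence of determinants, and preservation of injectivity all simultaneously require cofactor equi-integrability---precisely the point where the assumption that $\O$ avoids the axis is essential, and where the Conti--De~Lellis example would otherwise obstruct the argument. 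An alternative route for the lower semicontinuity of $\mathcal{E}$ uses Proposition~\ref{currentconv}: $\cof Du_n \rightharpoonup \cof Du$ in $L^1(\O)$ yields $G_{u_n}\rightharpoonup G_u$ in the sense of currents, so $\mathcal{E}(u)=\mathbb{M}(\p G_u)\leq \liminf \mathbb{M}(\p G_{u_n})=\liminf \mathcal{E}(u_n)$ by lower semicontinuity of mass under weak convergence of currents.
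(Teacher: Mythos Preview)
Your argument is correct and is in fact the direct route the paper itself endorses at the end of Section~3: once Proposition~\ref{cofconv} gives weak $L^1$ convergence of the cofactors on all of $\O$ (because $\O$ avoids the axis), Theorem~3 of \cite{HenaoMoraCorral2010} delivers both $\det Du_n\rightharpoonup\det Du$ in $L^1$ and the lower semicontinuity of $\mathcal{E}$, and the rest is as in Proposition~\ref{littlemain}. The paper's own proof in Section~4 deliberately takes a different path, through Cartesian currents: it uses Proposition~\ref{currentconv} to obtain $G_{u_n}\rightharpoonup G_u$ and then invokes Mucci's identity \eqref{Det=det} to show that the weak $L^1$ limit $\theta$ of $\det Du_n$ coincides with $\det Du$, bypassing the De~Lellis--Ghiraldin step used in Proposition~\ref{detconv}. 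Your approach is shorter and more self-contained; the paper's approach is there precisely to illustrate that the current-theoretic machinery of Section~4 can reprove the result, and to exhibit the link between $\mathcal{E}$ and the boundary of the graph current.

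One small imprecision: you assert that \emph{each} entry of $\cof Du_n$ is uniformly bounded in $L^2(\O)$. This is true for the off-diagonal entries $\frac{v_1^n}{r}\partial_\alpha v_\beta^n$ (since $v_1^n/r\in L^\infty$ once $r\geq r_0>0$), but the $(\theta,\theta)$ entry is $\det Dv_n$, for which no $L^2$ bound is available; its equi-integrability comes instead from M\"uller's higher-integrability theorem \cite{Higherintegrability}, exactly as in the proof of Proposition~\ref{cofconv}. The conclusion---equi-integrability of $\cof Du_n$ on $\O$---is unaffected.
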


\begin{proof}(Sketch)
Let $(u_n)_n$ be a minimizing sequence for $E(u)+\lambda \mathcal{E}(u)$ in $\mathcal{A}^{axi}:=\{v\in \mathcal{A}; v \text{ is axisymmetric} \}$. We have that, up to the extraction of a subsequence, $u_n\rightharpoonup u$ in $W^{1,2}$. From Proposition \ref{cofconv} we have
\[ \cof Du_n \rightharpoonup \cof Du \text{ in } L^1(K) \ \text{for every }  \text{compact set } K\subset \O\setminus\{O_z\}. \]
We then have thanks to a slight adaptation of Proposition \ref{currentconv} that $G_{u_n}\rightharpoonup G_{u}$ in the sense of currents and then $\p G_{u_n}\rightharpoonup \p G_{u}$. We also have (see proof of Proposition \ref{detconv}) that $\det Du_n \rightharpoonup \theta$ weakly in $L^1(\O)$ with $\theta>0$, and $\Det Du_n \rightharpoonup \Det Du$ in $\mathcal{D}(\O)$. We now use a result of Mucci (Proposition 3.1 in \cite{Mucci}) which states that for every $u$ in $W^{1,2}\cap L^\infty(\O,\R^3)$ such that $\det Du\in L^1(\O)$ we have that for all $\varphi\in C^\infty_c(\O)$
\begin{equation}\label{Det=det}
\langle \Det Du,\varphi\rangle -\langle \det Du,\varphi \rangle=-\pi_\sharp[(\p G_u)_2\lfloor{\hat{\pi}^\sharp \omega_3}](g),
\end{equation}
with $\omega_3=\frac 13 \sum_{j=1}^3(-1)^{j-1}y^j\hat{dy^j}$, $\pi:\R^n\times \R^n \rightarrow \R^n, (x,y)\mapsto x$ and $\hat{\pi}:\R^n\times\R^n \rightarrow \R^n, (x,y) \mapsto y$ and for the push-forward (or image) operation of a current by a function we refer to p.132 of \cite{CartesianCurrentsI}. We would like to pass to the limit in the previous expression. For general $f$ ($f$ smooth), $f_\sharp$ is not continuous for the convergence of currents as shown by an example p.132 of \cite{CartesianCurrentsI}. But in our situation, since we assume that $\|u_n\|_\infty \leq M$, the support of $G_{u_n}$ cannot ``go to infinity". More precisely we have that, from the definition:
\[ \pi_\sharp[(\p G_{u_n})_2\lfloor{\hat{\pi}^\sharp \omega_3}](g)= \langle (\p G_{u_n})_2,\zeta_n \pi^\sharp(g\hat{\pi}^\sharp \omega_3) \rangle\]
with $\zeta_n\in C^\infty_c(\O\times\R^3)$ such that $\zeta_n=1$ on $\supp G_{u_n} \cap \supp \pi^\sharp(g\hat{\pi}^\sharp \omega_3)$. However, since $\|u_n\|_\infty \leq M$ we can choose $\zeta$ independent of $n$, we can take $\zeta\in C^\infty_c(\O\times\R^3)$ such that $\zeta \equiv 1$ on $\bigcap_{n\in \mathbb{N}} \supp G_{u_n} \cap \supp \pi^\sharp(g\hat{\pi}^\sharp \omega_3)$ for each $g\in C^\infty_c(\O)$. This proves the desired continuity in our special case. Thus passing to the limit in \eqref{Det=det} we obtain that
\begin{equation}\nonumber
\langle \Det Du,\varphi\rangle -\langle \theta,\varphi \rangle=-\pi_\sharp[(\p G_u)_2\lfloor{\hat{\pi}^\sharp \omega_3}](g),
\end{equation}
for every $g\in C^\infty_c(\O)$. But applying the identity \eqref{Det=det} again we obtain that $\det Du=\theta>0$. Thus applying Theorem 2 in \cite{HenaoMoraCorral2010} to obtain that the limit $u$ is one-to-one almost everywhere, we have that $u\in \mathcal{A}^{axi}$. Besides the energy is lower semicontinuous for the sequence $u_n$ thanks to the weak $L^1$ continuity of the determinant for the $E(u)$ part and thanks to an argument similar to the one used at the end of the proof of Proposition \ref{detconv} for the $\mathcal{E}(u)$ part. This concludes the proof.
\end{proof}

If we assume that $\O$ is axisymmetric we can say more about the defect current $S$, we can describe its support.
\begin{proposition}
Let $\O \subset\R^3$ be an axisymmetric domain. Let $(u_n)_n$ be a minimizing sequence for $E$ in $\mathcal{A}_{nc}^{axi}$. We assume that $u_n \rightharpoonup u$ in $W^{1,2}$ then $G_{u_n}\rightharpoonup G_u+S$ and $\supp S \subset \left(\O\cap \{O_z\} \right) \times \R^3$.
\end{proposition}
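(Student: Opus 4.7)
The plan is to reduce the problem to showing that the defect current $S$ annihilates every test form $\omega \in \mathcal{D}^3(\O \times \R^3)$ whose support is contained in $(\O \setminus O_z) \times \R^3$. The decomposition $G_{u_n} \rightharpoonup G_u + S$ is already provided by Proposition \ref{defectcurrent} (whose proof only uses $\mathcal{E}(u_n)=0$, the $W^{1,2}$ bound, and $\|u_n\|_\infty \le M$, all of which hold for our axisymmetric minimizing sequence). So, pairing with such an $\omega$, it suffices to verify that $\langle G_{u_n}, \omega \rangle \to \langle G_u, \omega \rangle$. This is exactly a local version of Proposition \ref{currentconv}, the key new input being that Proposition \ref{cofconv} provides the required equi-integrability of $\cof Du_n$ on compact subsets of $\O \setminus O_z$ even though it fails globally.

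I would fix such an $\omega$ and set $K := \pi(\supp \omega) \subset \subset \O \setminus O_z$, so that $K$ stays away from the axis. Decomposing $\omega = \sum_{h=0}^3 \omega^{(h)}$ along the stratification, the pairing $\langle G_{u_n}, \omega^{(h)} \rangle$ becomes, after the standard computation on the graph, an integral over $K$ of a bounded smooth coefficient of the form $f(x,u_n(x))$ multiplied respectively by $1$, an entry of $Du_n$, an entry of $\cof Du_n$, or $\det Du_n$, according to $h = 0,1,2,3$. All four terms are integrations over the fixed compact set $K$.

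Next, I would pass to a further subsequence (via Rellich's embedding $W^{1,2}(K) \hookrightarrow L^2(K)$) so that $u_n \to u$ pointwise almost everywhere on $K$. The coefficients $f(\cdot, u_n(\cdot))$ then converge pointwise a.e.\ to $f(\cdot, u(\cdot))$ and are uniformly bounded by $\|f\|_\infty$. The four Jacobian minors converge weakly in $L^1(K)$: the gradient itself by the weak $L^2$ convergence $Du_n \rightharpoonup Du$; the cofactor by Proposition \ref{cofconv}; and the determinant by Proposition \ref{detconv}. Lemma \ref{Egoroff} applied to each stratum then yields $\langle G_{u_n}, \omega^{(h)} \rangle \to \langle G_u, \omega^{(h)} \rangle$. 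Summing over $h$ and invoking the uniqueness of the weak limit of $G_{u_n}$ (so that the subsequence reasoning does not affect the identification of $S$), we conclude $\langle S, \omega \rangle = 0$, hence $\supp S \subset (\O \cap \{O_z\}) \times \R^3$.

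The main obstacle is conceptual rather than technical: the natural global tool, Proposition \ref{currentconv}, requires equi-integrability of the full cofactor sequence, which the Conti--De Lellis dipole example explicitly rules out whenever the axis lies in $\O$. The resolution, which is precisely the content of Proposition \ref{cofconv} in the axisymmetric setting, is that this equi-integrability survives on every compact subset of $\O \setminus O_z$, and this is exactly the input needed to conclude that all of the concentration encoded by $S$ must sit over the symmetry axis.
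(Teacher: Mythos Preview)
Your proposal is correct and follows essentially the same approach as the paper: reduce to showing $\langle G_{u_n},\omega\rangle\to\langle G_u,\omega\rangle$ for forms supported in $(\O\setminus\{O_z\})\times\R^3$ by using weak $L^1$ convergence of all minors on compact sets away from the axis (Propositions~\ref{cofconv} and~\ref{detconv}) together with pointwise convergence of the coefficients and Lemma~\ref{Egoroff}. The paper's proof is much terser, but your added details (explicit stratification, the role of Lemma~\ref{Egoroff}, and the subsequence remark) are exactly what is implicit there.
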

\begin{proof}
We have seen (cf.\ Proposition \ref{cofconv}) that $\cof Du_n \rightharpoonup \cof Du$ in $L^1(K)$ for every compact set $K\subset \O \setminus \{O_z\}$. We have also, from Proposition \ref{detconv}, that $\det Du_n \rightharpoonup \det Du$ in $L^1(\O)$. The other minors of the Jacobian matrix (actually the coefficients of this matrix) converge weakly in $L^1(\O)$ to the limiting coefficients (because $Du_n \rightharpoonup Du$ in $L^2(\O,\R^3)$). Thus we have
\[ \langle G_{u_n}, \omega \rangle \rightarrow \langle G_u, \omega\rangle \]
for all $\omega\in \mathcal{D}(\O\times \R^n)$ such that \[\supp \omega \subset \O\setminus \{O_z\} \times \R^3.\] This concludes the proof.
\end{proof}

We have thus an information on the lack of compactness of the problem in the axisymmetric case from the point of view of currents. There is a relation between the defect measure and the defect current.

\begin{proposition}
Let $(u_n)_n$ be a minimizing sequence in $\mathcal{A}_{nc}$ for $E$. We assume that $u_n \rightharpoonup u$ in $W^{1,2}$, $G_{u_n} \rightharpoonup G_u+S$ in the sense of currents and $|Du_n|^2 \rightharpoonup |Du|^2+\mu$ in the sense of measures then:
\[ \supp S \subseteq \supp \mu \times \R^3. \]
\end{proposition}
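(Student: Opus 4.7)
The plan is to show that the defect current $S$ is supported in the region where the defect measure $\mu$ concentrates, by establishing that $G_{u_n} \rightharpoonup G_u$ as currents on any open set of the form $V \times \R^3$ with $V \cap \supp \mu = \emptyset$. Taking $V := \Omega \setminus \supp \mu$, which is open in $\Omega$, and combining this with the hypothesis $G_{u_n} \rightharpoonup G_u + S$, one forces $S$ to vanish on $V \times \R^3$ in the sense of currents, so that $\supp S \subseteq (\Omega \setminus V) \times \R^3 = (\Omega \cap \supp \mu) \times \R^3 \subseteq \supp \mu \times \R^3$.

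The engine of the proof is the upgrade from weak to strong local $W^{1,2}$ convergence on $V$. For any compact $K \subset V$, choose $\varphi \in C_c(V)$ with $0 \leq \varphi \leq 1$ and $\varphi \equiv 1$ on $K$. Since $\supp \mu \cap \supp \varphi = \emptyset$, the weak-$*$ convergence $|Du_n|^2 \rightharpoonup |Du|^2 + \mu$ yields $\int \varphi\, |Du_n|^2\, dx \to \int \varphi\, |Du|^2\, dx$. Expanding $\int \varphi\, |Du_n - Du|^2\, dx$ and using $Du_n \rightharpoonup Du$ in $L^2(\Omega)$ then gives $Du_n \to Du$ strongly in $L^2(K, \R^{3\times 3})$. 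By H\"older's inequality, the entries of $\cof Du_n$ and of $\det Du_n$ are products of respectively two and three entries of $Du_n$, and therefore converge strongly in $L^1(K)$ and $L^{2/3}(K)$ to the corresponding minors of $Du$.

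Equipped with this strong convergence of all minors on compact subsets of $V$, together with the uniform bound $\|u_n\|_\infty \leq M$ and the pointwise a.e.\ convergence $u_n \to u$, one can pass to the limit in $\langle G_{u_n}, \omega \rangle$ for every $\omega \in \mathcal{D}^3(\Omega \times \R^3)$ whose support is compactly contained in $V \times \R^3$: exactly as in the proof of Proposition \ref{currentconv}, Lemma \ref{Egoroff} applies to the bounded continuous coefficients $\omega_I(x, u_n(x))$ paired against the appropriate minors. The step requiring the most care is this localization of Proposition \ref{currentconv}, which is stated globally on $\Omega$; however, its proof rests on Lemma \ref{Egoroff} and on Proposition 2, p.\ 232 of \cite{CartesianCurrentsI}, both of which are local in nature, so the argument restricted to forms supported in $V \times \R^3$ goes through unchanged and yields the desired inclusion.
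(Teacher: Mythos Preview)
Your proof is correct and follows the same strategy as the paper's: upgrade the weak $W^{1,2}$ convergence to strong $W^{1,2}_{\mathrm{loc}}$ convergence on $\Omega\setminus\supp\mu$, deduce that $G_{u_n}\rightharpoonup G_u$ as currents on $(\Omega\setminus\supp\mu)\times\R^3$, and conclude that $S$ vanishes there. The paper derives the strong $L^2$ convergence of $Du_n$ on a compact $K\subset\Omega\setminus\supp\mu$ from the Portmanteau-type inequality $\limsup_n\int_K|Du_n|^2\le\int_K|Du|^2$ together with weak lower semicontinuity, while your cutoff-and-expand computation of $\int\varphi\,|Du_n-Du|^2$ is an equally valid and slightly more direct route to the same conclusion.

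One small point to tighten: from strong $L^2(K)$ convergence of $Du_n$ you get $\det Du_n\to\det Du$ only in $L^{2/3}(K)$, yet you then invoke Lemma~\ref{Egoroff}, which requires weak $L^1$ convergence. The gap is closed by observing that, since $(u_n)$ is minimizing, $\sup_n\int_\Omega H(\det Du_n)<\infty$, so by the de la Vall\'ee Poussin criterion $(\det Du_n)$ is equi-integrable; combined with convergence in measure (from the $L^{2/3}$ convergence), Vitali's theorem yields $\det Du_n\to\det Du$ in $L^1(K)$, and the rest of your argument goes through.
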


\begin{proof}
Let $\omega\in \mathcal{D}^3(\O\times \R^3)$ be such that $\supp \omega \subset (\O\setminus\supp \mu)\times \R^3$. We claim that $\langle G_{u_n}, \omega \rangle \rightarrow \langle G_u, \omega \rangle$ because $u_n \rightarrow u$ in $W^{1,2}_{\text{loc}}(\O \setminus \supp \mu)$. Indeed for every compact set $K\subset \O \setminus \supp \mu$ we have
\[ \limsup_{n\rightarrow +\infty} \int_K |Du_n|^2 \leq \int_K |Du|^2 \]
because $|Du_n|^2 \rightharpoonup |Du|^2$ in the sense of measures on $\O \setminus \supp \mu$. On the other hand we always have $\int_K |Du|^2 \leq \liminf_{n\rightarrow +\infty} \int_K |Du_n|^2$. We thus have that $$\lim_{n\rightarrow +\infty} \int_K |Du_n|^2= \int_{K} |Du|^2$$ and this implies the strong convergence of $u_n$ in $W^{1,2}_{\text{loc}}(\O \setminus \supp \mu)$. Hence $\langle S, \omega \rangle =0$ for every $\omega \in \mathcal{D}^3((\Omega\setminus \supp \mu)\times \R^3)$ which means that $\supp S \subset \supp \mu \times \R^3$.
\end{proof}

We have already mentioned that the example of Conti-De Lellis corresponds to a dipole phenomenon in harmonic maps theory. This phenomenon appears, for example if we consider a sequence of stationary harmonic maps from a domain $\O\subset \R^n$, $n\geq3$ into a smooth compact manifold with  uniformly bounded energies. This was studied by Lin in \cite{Linstationary} and Lin-Rivi\`ere in \cite{LinRiviere}. In that case they were able to give a precise description of the defect measure, it is supported on a rectifiable set of Hausdorff dimension $(n-1)$. In dimension $n=3$ and when the target manifold is the sphere $\mathbb{S}^2$ the lack of compactness of sequence of maps with uniformly bounded energies can be analyzed with the theory of Cartesian currents. This has been done in \cite{GiaquintaModicaSoucek} and \cite{Cartesiancurrents2}. In that case they proved that the defect current is a one-dimensional rectifiable current times $\llbracket \mathbb{S}^2\rrbracket$ (the integration on the sphere). This is another description of the dipole. Note that axisymmetric harmonic maps have been studied in \cite{HardtLinPoon} and \cite{Martinazzi}. In those cases also, the singularities are located only on the axis of rotation of the domain.

\section{ $W^{1,2}$-quasiconvexity of the neo-Hookean energy}

The aim of this section is to consider the case where $\O \subset \R^3$ is not necessarily axisymmetric but the boundary condition is affine, i.e. $u_{|\p \O}=\lambda x+b$ for some $\lambda \in \R$ and $b\in \R^3$. If we work in a class where no cavitation  can occur, for example in the class $\mathcal{A}_{nc}$, we expect $\int_\O \det Dudx$ to be a null-Lagrangian and, hence, the affine transformation $u(x)=\lambda x+b$ to be a minimizer (using Jensen's inequality). The heuristic argument is the following. Recall that if $u$ is in $\mathcal{A}_{nc}$ then $\det Du=\Det Du=\frac 13 \dive [(\cof Du)^T.u]$. From the Jensen's inequality we have:
\begin{equation}\nonumber
\frac{1}{|\O|}\int_\O H(\det Du) \geq H\left( \frac{1}{|\O|}\int_\O \det Du \right).
\end{equation}
Now we can think that
\begin{eqnarray}\label{falseproof}
\int_\O \det Du &=&\int_\O \frac 13 \dive [(\cof Du)^T.u] \nonumber \\
&=& \int_{\p \O} \frac 13 (\cof Du)^T.u \cdot \nu \\
&=& \frac 13 \int_{ \p \O} \lambda^3 x\cdot \nu +\lambda^2b\cdot \nu \nonumber \\
&=& \lambda^3 |\O|. \nonumber
\end{eqnarray}
Then we could infer that $\int_\O H(\det Du) \geq H(\lambda^3)|\O|$ with equality if $u$ is affine and a similar treatment of the Dirichlet part of the energy would yield that the affine transformation is a minimizer. However the second inequality in \eqref{falseproof} is not true in general. Indeed M\"uller-Spector built the following example in \cite{MullerSpector1995}. Let $C=(-1,1)\times (0,1)$ they constructed a map $f\in W^{1,p}(C,\R^2)$, for every $1\leq p<2$ such that $\Det Df=\det Df$, $f_{|\p C}= \text{Id}$ and $\int_C \det Df \neq |C|$. What happens is that $f$ opens a cavity at the boundary of the domain, which is not detected by the distributional determinant. To avoid this phenomenon we work in a new class where both cavitation in the interior and at the boundary are excluded. Following Sivaloganathan-Spector, \cite{SivaloganathanSpector}, we avoid this possibility of cavitation at the boundary by working in a new admissible class where the condition $\mathcal{E}(u)=0$ is imposed in a domain slightly larger than $\O$. The example of M\"uller-Spector shows the failure of the divergence formula in \eqref{falseproof} when it is understood in its classical pointwise. However Chen-Frid in \cite{ChenFrid1999} (see also \cite{ChenTorresZiemer2009}) generalized the divergence formula for weakly differentiable vector fields ($(\cof Du)^T.u$ satisfies the hypothesis in \cite{ChenFrid1999}) by interpreting the normal trace as some measure on $\p \O$. If we apply that theory to a map $u$ which exhibits cavitation at the boundary we can show that the measure in the theory of Chen-Frid will be equal to the classical normal trace plus a sum of Dirac masses centered at the cavitation points (each Dirac mass being multiplied by a coefficient equal to the volume of the cavity).   \\

We choose $\tilde{\O}$ a smooth bounded domain such that $\O \subset \subset \tilde{\O} \subset \R^3$. We let $g(x)=\lambda x+ b$ for some $\lambda\in \R$, $b\in \R^3$. Let $u$ be in $\mathcal{A}_{nc}$, by definition we have $u_{|\p \O}=g$. We define an extension of $u$ in the following way: we let $u^e$ be defined by
\begin{equation}\nonumber
u^e=\begin{cases} u(x) \text{ for } x\in \O \\
g(x) \text{ for } x\in \tilde{\O}\setminus \O.
\end{cases}
\end{equation}
Note that since $u_{|\p \O}=g$ we have $u^e\in W^{1,2}(\tilde{\O},\R^3)$. We then set
\begin{equation}\nonumber
\mathcal{A}_{ncb}=\{v \in \mathcal{A} ; \  \mathcal{E}_{\tilde{\O}}(v^e)=0\}
\end{equation}
where
\begin{multline}
\mathcal{E}_{\tilde{\O}}(w)=\sup \big\{ \int_{\tilde{\O}} \langle \cof Dw, D_xf(x,w(x))\rangle +\det Dw \dive_y f(x,w(x)) ; \\ f \in C^1_c(\tilde{\O}\times \R^3,\R^3), \ \|f\|_\infty \leq 1 \big\}. \nonumber
\end{multline}
The difference with the definition of the surface energy $\mathcal{E}$ in $\O$ is that here the test functions are defined in the larger domain $\tilde{\O}$.

Contrarily to the situation when cavitation is allowed, in which the argument of \cite{Ball2009} p.\ 10 suggests that minimizers might not exist, we do have existence of minimizers if some affine data is prescribed on the boundary and if we exclude the possibility of cavitation in the interior and at the boundary.

\begin{proposition}
There exists a minimizer of $E$ in $\mathcal{A}_{ncb}$. Furthermore it is unique and equals $u(x)=\lambda x+b$.
\end{proposition}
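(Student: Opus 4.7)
The plan is to show that $g(x)=\lambda x+b$ itself belongs to $\mathcal{A}_{ncb}$ and is the unique minimizer. That $g\in\mathcal{A}_{ncb}$ is immediate (assuming $\lambda>0$, as required by the orientation-preserving condition): the extension $g^e$ equals $g$ throughout $\tilde{\Omega}$, which is a $C^1$-diffeomorphism, so $\mathcal{E}_{\tilde{\Omega}}(g^e)=0$ (cf.\ Lemma 5.3 of \cite{HenaoMoraCorral2012}); $g$ has constant positive determinant $\lambda^3$, is globally injective, and satisfies $\|g\|_\infty\le M$ for $M$ sufficiently large. It therefore remains to establish $E(u)\ge E(g)$ for every $u\in\mathcal{A}_{ncb}$, with equality only if $u=g$. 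I would combine three ingredients: a null-Lagrangian identity coming from the extra boundary-cavitation exclusion, Jensen's inequality applied to $H$, and the classical Dirichlet computation under affine trace.

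The crux is to show that $\int_\Omega\det Du\,dx=\lambda^3|\Omega|$ for every $u\in\mathcal{A}_{ncb}$. Since $\mathcal{E}_{\tilde{\Omega}}(u^e)=0$, the identity $\Det Du^e=\det Du^e$ holds as distributions in $\tilde{\Omega}$ (obtained by testing the definition of $\mathcal{E}_{\tilde{\Omega}}$ against functions of the form $f(x,y)=\tfrac{1}{3}\varphi(x)\,y\,\eta(y)$, with $\varphi\in C^\infty_c(\tilde{\Omega})$ and $\eta\in C^\infty_c(\R^3)$ identically equal to $1$ on $\{|y|\le 2M\}$; the bound $\|u^e\|_\infty\le M$ kills the $\eta$-derivatives after composition with $u^e$). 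Choose a cutoff $\psi\in C^\infty_c(\tilde{\Omega})$ with $\psi\equiv 1$ on $\overline{\Omega}$, so that $\nabla\psi$ is supported in $\tilde{\Omega}\setminus\overline{\Omega}$, where $u^e\equiv g$. Then
\begin{align*}
\int_\Omega \det Du+\lambda^3\int_{\tilde{\Omega}\setminus\Omega}\psi
&= \int_{\tilde{\Omega}}\det Du^e\cdot\psi \;=\; \langle \Det Du^e,\psi\rangle \\
&= -\tfrac{1}{3}\int_{\tilde{\Omega}\setminus\Omega}(\cof Dg)^T g\cdot\nabla\psi \;=\; \lambda^3\int_{\tilde{\Omega}}\psi,
\end{align*}
where the last equality is the classical divergence theorem applied to the smooth field $\tfrac{1}{3}(\cof Dg)^T g$, whose pointwise divergence equals $\det Dg=\lambda^3$. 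Simplifying yields $\int_\Omega\det Du=\lambda^3|\Omega|$.

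With the null-Lagrangian in hand, Jensen's inequality applied to the convex $H$ gives $\int_\Omega H(\det Du)\ge|\Omega|\,H(\lambda^3)$. For the Dirichlet part, write $u=g+w$ with $w\in W^{1,2}_0(\Omega,\R^3)$; expanding $|Du|^2=|Dg+Dw|^2$ and using $Dg=\lambda I$ yields
\begin{equation*}
\int_\Omega|Du|^2 \;=\; 3\lambda^2|\Omega| \;+\; 2\lambda\int_\Omega\dive w \;+\; \int_\Omega|Dw|^2 \;=\; 3\lambda^2|\Omega|+\int_\Omega|Dw|^2,
\end{equation*}
the cross term vanishing by the divergence theorem since $w=0$ on $\p\Omega$. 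Hence $\int_\Omega|Du|^2\ge 3\lambda^2|\Omega|$, with equality iff $Dw\equiv 0$, i.e.\ $u=g$. Summing, $E(u)\ge 3\lambda^2|\Omega|+|\Omega|H(\lambda^3)=E(g)$, with equality forcing $u=g$. The only delicate step is the null-Lagrangian identity: without the stronger condition $\mathcal{E}_{\tilde{\Omega}}(u^e)=0$, the equality $\int_\Omega\det Du=\lambda^3|\Omega|$ can fail, as the M\"uller--Spector example recalled at the beginning of this section shows. The whole purpose of working on the enlarged domain $\tilde{\Omega}$ is to preclude that boundary pathology, so that the divergence formula can be applied through a cutoff supported in the affine region where $u^e=g$.
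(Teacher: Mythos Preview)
Your proof is correct and follows the same overall architecture as the paper's --- establish the null-Lagrangian identity $\int_\Omega\det Du=\lambda^3|\Omega|$, apply Jensen to the $H$ term, and bound the Dirichlet term from below --- but the execution of the key step is genuinely more elementary.

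For the null-Lagrangian identity the paper invokes the Chen--Frid theory of divergence-measure fields: it approximates $F=\tfrac13(\cof Du^e)^Tu^e$ by smooth fields $F_j$, works on an outer tubular family $\Omega_t\supset\Omega$ with $\partial\Omega_t\subset\tilde\Omega\setminus\overline\Omega$, applies the classical divergence theorem on each $\Omega_t$, and then passes to the limit first in $j$ and then in $t$. Your cutoff argument sidesteps all of this: testing the distributional identity $\Det Du^e=\det Du^e$ (valid on $\tilde\Omega$ since $\mathcal{E}_{\tilde\Omega}(u^e)=0$) against $\psi\in C^\infty_c(\tilde\Omega)$ with $\psi\equiv1$ near $\overline\Omega$ throws $\nabla\psi$ into the region where $u^e$ is affine, and the computation closes in two lines. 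This is cleaner and makes the role of the enlarged domain $\tilde\Omega$ more transparent --- it is exactly what is needed so that the support of $\nabla\psi$ sits in the affine region. For the Dirichlet part you expand $u=g+w$ with $w\in W^{1,2}_0$, whereas the paper applies Jensen to each $\partial_iu^j$; both give the same lower bound and the same uniqueness conclusion.
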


\begin{proof}
Let $u\in \mathcal{A}_{ncb}$, since $H$ is convex Jensen's inequality yields
\begin{equation}\nonumber
\frac{1}{|\O|}\int_\O H(\det Du) \geq H\left(\frac{1}{|\O|}\int_\O \det Du\right).
\end{equation}
Since $u\in \mathcal{A}_{ncb}$ (in particular $u\in \mathcal{A}_{nc}$) we have that $\det Du=\Det Du= \frac 13 \dive\left[ (\cof Du)^T.u \right]$. We now show that for $u\in \mathcal{A}_{ncb}$ we have
\begin{equation}\nonumber
\int_\O \det Du=\lambda^3|\O|.
\end{equation}
For that we let $F:=\frac 13 (\cof {Du^e})^T.u^e$. We have that $F$ is in $L^1(\tilde{\O},\R^3)$ and $\dive F=\det Du^e$ is also in $L^1(\tilde{\O},\R^3)$ (since $\mathcal{E}_{\tilde{\O}}(u^e)=0$). We then claim that we can find a sequence of vector fields $(F_j)_j$ such that
\begin{itemize}
\item[*] $F_j\in C^\infty(\tilde{\O},\R^3)$
\item[*] $F_j \rightarrow F$ strongly in $L^1(\tilde{\O},\R^3)$
\item[*] $\int_{\tilde{\O}} |\dive F_j| \rightarrow \int_{\tilde{\O}} |\dive F|$
\item[*] $\dive F_j \rightharpoonup \dive F$ weakly in $\mathcal{M}(\tilde{\O})$.
\end{itemize}

For a proof of this fact we refer to \cite[Theorem~1.2]{ChenFrid1999} (for the first three items) and the fourth item is an adaptation of \cite[Theorem~3, p.175]{EvansGariepy}.
Now we consider exterior deformations of $\p \O$. More precisely since we assumed that $\O$ is a non-empty smooth ($C^\infty$) open set, we can consider the exterior normal to $\p \O$, $\nu:\p \O \rightarrow \R^3$  and we have (cf.\ \cite[Theorem 16.25.2]{Dieudonne}):
\begin{itemize}
\item[i)] There exists $\delta>0$ such that the map $w:\p \O \times (-\delta,\delta) \rightarrow \tilde{\O}$ given by
\[ w(x,t)=x+t\nu(x) \ \ \ \ \ x\in \p \O, \ \ t\in \R \]
is a $C^\infty$-diffeomorphism from $\p \O \times (-\delta, \delta)$ into $$N(\p \O,\delta):=\{x\in \tilde{\O}, \dist(x,\p \O)<\delta\}.$$
\item[ii)] The function $d:\tilde{\O}\rightarrow \R$ given by
\[ d(x):= \begin{cases} -\dist(x,\p \O) \ \text{ if } x\in \O, \\
0 \ \text{ if } x\in \p \O, \\
+\dist(x,\p \O) \ \text{ if } x \in \tilde{\O} \setminus \O,
\end{cases}\]
is continuous in $\tilde{\O}$ and of class $C^\infty$ in $N(\p \O,\delta)$.
\item[iii)] For every $t$ in $(-\delta,\delta)$ the set $\O_t:=\{x \in \O; \dist(x,\p \O)<t\}$ is open and compactly contained in $\tilde{\O}$, and has $C^\infty$ boundary. We let $\nu_t$ be the exterior normal to $\p \O_t$.
\end{itemize}

Thus, for every $\varphi$ in $C^1_c(\R^n)$, thanks to the classical divergence formula we have:
\begin{equation}\label{classikdiv}
\int_{\O_t} \varphi \dive F_j= \int_{\p \O_t}\varphi F_j\cdot \nu_t-\int_{\O_t} F_j\cdot \nabla \varphi.
\end{equation}
Furthermore since $F=\lambda^3x+\lambda^2 b$ in a neighborhood of $\p \O_t$ we have that $F_j \rightarrow F$ uniformly in this neighborhood (the approximation $F_j$ is locally given by a convolution product with a regularizing kernel). Thus we have all the ingredients to pass to the limit as $j\rightarrow +\infty$ in \eqref{classikdiv} to obtain
\begin{eqnarray}\label{prepasst}
\int_{\O_t} \varphi \dive F &=& \int_{\p \O_t}\varphi F\cdot \nu_t-\int_{\O_t} F\cdot \nabla \varphi \nonumber \\
&=& \frac 13 \int_{\p \O_t} \lambda^2\varphi(\lambda x\cdot \nu_t+b\cdot \nu_t)-\int_{\O_t}F\cdot \nabla \varphi.
\end{eqnarray}
By using Lebesgue's dominated convergence theorem we have
\begin{itemize}
\item[*] $\int_{\O_t} \varphi \dive F \rightarrow \int_{\O}\varphi \dive F=\int_\O \varphi \det Du$ as $t\rightarrow 0$
\item[*] $\int_{\O_t} F\cdot \nabla \varphi \rightarrow \int_{\O} F\cdot \nabla \varphi$ as $t \rightarrow 0$.
\end{itemize}
Now observe that $\nu_t=\nu$ (where $\nu$ is the outer unit normal to $\p \O$) and by using the change of variable $x=y+t\nu(y)$ for $y\in \p \O$ we have that
\begin{eqnarray}
\frac 13 \int_{\p \O_t} \lambda^2\varphi(\lambda x\cdot \nu_t+b\cdot \nu_t) = \frac 13 \int_{\p \O} \lambda^2 \varphi(y+t\nu(y))\left[ \lambda (y+t\nu(y))\cdot\nu(y)+b\cdot \nu(y)\right]. \nonumber
\end{eqnarray}
Applying Lebesgue's dominated convergence theorem again we obtain that
\[ \int_{\p \O_t} \lambda^2\varphi(\lambda x\cdot \nu_t+b\cdot \nu_t) \rightarrow \int_{\p \O} \lambda^2\varphi(\lambda x\cdot \nu+b\cdot \nu). \]
We can then take $\varphi \equiv 1$ on $\overline{\O}$, and we observe that $\int_{\p \O} b\cdot \nu=0$ for a constant vector $b$ so we deduce that
\begin{equation}\nonumber
\int_\O \det Du=\frac{\lambda^3}{3} \int_{\p \O} x\cdot \nu= \lambda^3|\O|.
\end{equation}
For the last equality we used the divergence formula. Thus we have that, for every $u$ in $\mathcal{A}_{ncb}$
\begin{equation}
\int_\O H(\det Du) \geq |\O| H(\lambda^3)
\end{equation}
with equality for $u=\lambda x+b$ in $\O$.

%
%

On the other hand, we also have that
\[\int_\O |Du|^2=\sum_{i,j=1}^3 \int_\O|\p_iu^j|^2 \]
\begin{eqnarray}\label{jensen2}
\frac{1}{|\O|}\int_\O|Du|^2 &\geq & \sum_{i,j=1}^3 \left( \frac{1}{|\O|}\int_\O \p_iu^j\right)^2 \nonumber \\
& \geq & \sum_{i,j=1}^3 \left( \int_{\p \O} u_j\cdot \nu_i \right)^2 \nonumber \\
& \geq & \sum_{i,j=1}^3 \left( \frac{1}{|\O|}\int_{\p \O} \lambda x_j\nu_i\right)^2
\end{eqnarray}
and again there is equality for $u=\lambda x+b$ on $\O$ (this times observe that $\int_{\p \O} x_j\nu_i=\delta_{ij}|\O|$ thanks to the divergence formula).  This proves that $u=\lambda x+b$ is a minimizer of $E$ in $\mathcal{A}_{nc}$. Note that since $x \mapsto x^2$ is strictly convex, thanks to the equality case in Jensen's equality, we have equality in \eqref{jensen2} if and only if $Du$ is a constant matrix. By using the boundary condition we then find that $u(x)=\lambda x +b$ is the unique minimizer of $E$ in $\mathcal{A}_{nc}$.
\end{proof}

The previous proposition shows that the lack of compactness of the problem does not prevent the existence of minimizers (at least in some cases). We note that the example of Conti-De Lellis is local and can be built for any boundary data $g$. Indeed they constructed their example in a ball, and by rescaling we can adapt their example to any bounded domain $\O$. Thus their example is not necessarily an obstruction to the existence of minimizers.

\section{Equations satisfied by a minimizer $u$ in $\mathcal{A}_{nc}^{axi}$}

 Now that we have obtained the existence of a minimizer $u$ of $E$ in the set $\mathcal{A}_{nc}^{axi}$ it is natural to wonder if such a minimizer satisfies some equations. We note that if we can build some variations
\begin{eqnarray}
(-\e,\e) & \rightarrow & \mathcal{A}_{nc}^{axi} \nonumber \\
t & \mapsto u_t \nonumber
\end{eqnarray}
with $u_{t=0}=u$ a minimizer of $E$ in $\mathcal{A}_{nc}^{axi}$ and such that $t\mapsto E(u_t)$ is differentiable then we will obtain $\frac{d}{dt}_{|t=0}E(u_t)=0$. First we can think of variations of the form $u_t=u+t\Phi$ with $\Phi \in C^\infty_c(\O,\R^3)$. However it is difficult to prove that $u_t$ is in $\mathcal{A}_{nc}^{axi}$ for $t$ small. It can happen that $\det(Du_t)=0$ on a set of positive Lebesgue measure. These variations would lead us to some Euler-Lagrange equations for $u$ and we note that it is an open problem in elasticity to know if a minimizer does satisfy the Euler-Lagrange equations associated to its functional (cf.\ \cite{Ballmin}). \\

Instead of the previously mentioned variations we will consider some special inner variations. For $u$ axisymmetric there exists $v: \O_0 \rightarrow \R^2$ such that $u(r\cos \theta,r\sin \theta,z)=v_1(r,z)e_r+v_2(r,z)e_z$, with $v_1(r,z)\geq 0$ a.e.\ in $\O_0$. We let $X=(r,z)$ and \[v_t(X)=v(X+t\varphi(X)) \text{ for } t\in (-1,1).\] We will show that $v$ satisfies some equations and so does $u$. Furthermore these equations are the same that we would expect if $u$ were a minimizer of $E$ in $\mathcal{A}_{nc}$. We recall that for $u$ in $\mathcal{A}_{nc}^{axi}$ with $u=v_1 e_r+v_2e_z$ we have $E(u)=2\pi G(v)$ with
\[G(v)=\int_{\O_0} |D  v|^2rdrdz +\int_{\O_0}\frac{v_1^2}{r}drdz+\int_{\O_0}H\left(\frac{v_1}{r}\det D v\right)rdrdz. \]

In this section we need some supplementary hypotheses on $H$, these are inspired by \cite{BaumanOwenPhillips}.
There exist  $s,c_1,c_2,d_0>0$ such that
\begin{equation}\label{hypothesisH1}
c_1 t^{-s-k} \leq (-1)^k\frac{d^k}{dt^k} H(t) \leq c_2 t^{-s-k} \text{ for } k=0,1 \text{ and for } t<d_0,
\end{equation}
and there exists $\tau,c_3,c_4,d_1>0$ such that
\begin{equation}
c_3t^{\tau+1} \leq H'(t) \leq c_4t^{\tau+1} \text{ for } t\geq d_1.
\end{equation}

\begin{theorem}\label{Equations}
Let $\O\subset \R^3$ be a smooth axially symmetric bounded domain such that $\inf_{(x,y,z)\in \O}\sqrt{x^2+y^2}>0$. Let $u$ be a minimizer of $E$ in $\mathcal{A}_{nc}^{axi}$. Let $v:\O_0\rightarrow \R^2$ such that $u(r\cos \theta,r\sin \theta,z)=v_1(r,z)e_r+v_2(r,z)e_z$. Then we have
\begin{align}\label{equation1}
\frac{1}{r}\p_r\left( r\left[(|\p_rv|^2-|\p_zv|^2-\frac{v_1^2}{r^2})+H'(\frac{v_1}{r}\det D v)\frac{v_1}{r}\det D v-H(\frac{v_1}{r}\det D v)\right] \right)+\nonumber \\
   \p_z(2\p_rv\cdot \p_zv) = \frac{1}{r}\left[ -|D v|^2+\frac{v_1^2}{r^2}+H'(\frac{v_1}{r}\det D v)\frac{v_1}{r}\det D v-H(\frac{v_1}{r}\det D v) \right]
\end{align}
\begin{align}\label{equation2}
\frac{1}{r}\p_r(2r\p_rv\cdot\p_zv)+ \nonumber \\
\p_z \left[ (|\p_zv|^2-|\p_rv|^2-\frac{v_1^2}{r^2})+H'(\frac{v_1}{r}\det D v)\frac{v_1}{r}\det D v -H(\frac{v_1}{r}\det D v) \right]=0 \end{align}
in the sense of distributions. This is equivalent to
\begin{equation}
\Dive\left( 2Du^TDu+\left[H'(\det Du)\det Du-|Du|^2-H(\det Du)\right]I \right)=0
\end{equation}
in the sense of distributions.
\end{theorem}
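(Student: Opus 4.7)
The strategy is to exploit the meridional reduction $E(u) = 2\pi G(v)$ from Section~3 and perform \emph{inner variations} in the two-dimensional domain $\O_0$. Because $\O$ is bounded away from the axis we have $\O_0 \subset \{r\geq r_0\}$ for some $r_0>0$, so the factors $1/r$ are uniformly harmless. Given $\varphi \in C^\infty_c(\O_0,\R^2)$, the map $\psi_t(X) := X + t\varphi(X)$ is a smooth diffeomorphism of $\O_0$ for $|t|$ small; I set $v_t := v\circ \psi_t$ and let $u_t$ be the axisymmetric deformation with meridional profile $v_t$, which is equivalently $u_t = u\circ \Psi_t$ for $\Psi_t$ the axisymmetric diffeomorphism of $\O$ induced by $\psi_t$.

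Before differentiating I must verify that $u_t\in \mathcal{A}_{nc}^{axi}$ for $|t|$ small. Axisymmetry, the $L^\infty$ bound and the boundary condition are immediate since $\varphi$ has compact support. The pointwise chain rule gives $\det Du_t=(\det Du)\circ\Psi_t\cdot \det D\Psi_t>0$ a.e., and the a.e.\ injectivity of $u_t$ follows from that of $u$ combined with the bijectivity of $\Psi_t$. The delicate point is preservation of $\mathcal{E}(u_t)=0$. Using the identity $\mathcal{E}(w)=\mathbb{M}(\p G_w)$ recalled in Section~4 and the fact that $G_{u_t}$ is the push-forward of $G_u$ under the diffeomorphism $(Y,w)\mapsto(\Psi_t^{-1}(Y),w)$ of $\O\times \R^3$, commutation of $\p$ with push-forward gives $\p G_{u_t}=0$, and hence $\mathcal{E}(u_t)=0$.

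Minimality of $u$ then forces $\frac{d}{dt}_{|t=0}G(v_t)=0$, once differentiability is established. I change variables $Y=\psi_t(X)$ in each of the three integrals of $G(v_t)$, transferring the whole $t$-dependence onto the smooth coefficients $D\psi_t^{-1}(Y)$, $\det D\psi_t^{-1}(Y)$ and $r(\psi_t^{-1}(Y))$, while $v(Y)$ and $Dv(Y)$ remain fixed. The only subtle step is to pass $\frac{d}{dt}$ under the integral sign in the $H$-term: the two-sided growth bounds imposed on $H$ imply $|H'(\det Du)\det Du|\leq C\bigl(H(\det Du)+1\bigr)$ pointwise on $\O$, which provides a uniform-in-$t$ $L^1$ majorant for the difference quotients and legitimates dominated convergence. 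Taking $\varphi = \eta\, e_r$ and $\varphi = \eta\, e_z$ separately for arbitrary $\eta\in C^\infty_c(\O_0)$ then produces the two distributional identities \eqref{equation1} and \eqref{equation2} on $\O_0$.

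For the equivalence with the three-dimensional divergence form, I write the energy-momentum tensor $T:=2Du^TDu+[H'(\det Du)\det Du-|Du|^2-H(\det Du)]I$ in the cylindrical frame using the Appendix formulas: $T$ is axisymmetric and symmetric with $T_{r\theta}=T_{z\theta}=0$, and the entries $T_{rr},T_{\theta\theta},T_{zz},T_{rz}$ depend only on $(r,z)$. Expanding $\Dive T$ in cylindrical coordinates gives $(\Dive T)_\theta\equiv 0$ automatically by axisymmetry, whereas $(\Dive T)_r=\frac{1}{r}\p_r(rT_{rr})+\p_z T_{rz}-T_{\theta\theta}/r$ and $(\Dive T)_z=\frac{1}{r}\p_r(rT_{rz})+\p_z T_{zz}$ rearrange exactly to the left-hand sides of \eqref{equation1} and \eqref{equation2}. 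Since $T$ is axisymmetric, testing $\Dive T=0$ against an arbitrary $\Phi\in C^\infty_c(\O,\R^3)$ reduces, via Fourier decomposition in $\theta$ and orthogonality, to testing against axisymmetric $\Phi$, which is precisely the content of the two 2D identities. The main obstacles in this plan are the preservation of $\mathcal{E}(u_t)=0$ under composition with $\Psi_t$ and the $L^1$-domination in the $H$-term; both rely essentially on the structural assumptions collected in the statement.
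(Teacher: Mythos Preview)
Your proposal is correct and follows the same overall architecture as the paper's proof: inner variations $v_t=v\circ\psi_t$ in the meridional section, verification that $u_t\in\mathcal{A}_{nc}^{axi}$, differentiation of $G(v_t)$ at $t=0$ via change of variables and dominated convergence, and identification of the resulting $2$D system with the cylindrical components of $\Dive T=0$.

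Two points of execution differ from the paper and are worth recording. First, for the preservation of $\mathcal{E}(u_t)=0$ the paper proceeds by a direct change of variables in the integral definition of $\mathcal{E}_{u\circ\Psi_t}(f)$, showing $\mathcal{E}_{u\circ\Psi_t}(f)=\mathcal{E}_u(f_t)$ with $f_t(z,y)=f(\Psi_t^{-1}(z),y)$; your argument via $\mathcal{E}(w)=\mathbb{M}(\p G_w)$ and the commutation of $\p$ with push-forward by the diffeomorphism $(Y,w)\mapsto(\Psi_t^{-1}(Y),w)$ is more geometric and equally valid, though it imports the currents machinery of Section~4. Second, for the domination step the paper isolates an abstract lemma (in the style of Bauman--Owen--Phillips) whose hypothesis is $|F^TD_F\gamma(Y,p,FC)|\leq N(1+|F|^2+\gamma(X,p,F))$ for $|C-I|$ small, and then verifies this for $\gamma(X,p,F)=rH(\tfrac{p_1}{r}\det F)$ using the two-sided power bounds on $H$. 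Your one-line bound $|H'(d)d|\leq C(H(d)+1)$ is not quite sufficient as stated, because after the change of variables the argument of $H'$ is $c(t,Y)\,\det Du$ with $c$ near $1$, so you implicitly also need $H(cd)\leq C'(H(d)+1)$ for $c\in[\tfrac12,2]$; this does follow from the two-sided growth hypotheses you invoke, but it is exactly the ``shifted argument'' subtlety that the paper's abstract lemma is designed to handle.
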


\textbf{Remark}: The last equation means that the energy-momentum tensor of $u$ associated to the functional $E$ is divergence free. That is what we would expect for a minimizer of $E$ in $\mathcal{A}_{nc}$. Indeed if $u$ is a minimizer of $E$ in $\mathcal{A}_{nc}$, we can prove as in \cite{BaumanOwenPhillips} that $\frac{d}{dt}_{|t=0}E(u_t)=0$ with $u_t=u(x+t\varphi(x))$ for some $\varphi:\O \rightarrow \R^3$ with compact support. The latter critical condition leads to the energy-momentum tensor being divergence free.  \\

The rest of this section is devoted to the proof of Theorem \ref{Equations}. For the comfort of the reader we divide that proof in several steps.

\begin{lemma}\label{appartenance}
Let $u\in \mathcal{A}_{nc}^{axi}$ and $v:\O_0\rightarrow \R^2$ be such that $$u(r\cos \theta,r \sin \theta,z)=v_1(r,z)e_r+v_2(r,z)e_z.$$ For each $\varphi\in C^1_c(\O_0,\R^2)$ there exists $\e_0>0$ such that $v_t(X):=v(X+t\varphi(X))$ satisfies that $u_t=v_t^1e_r+v_t^2e_z$ is in $\mathcal{A}_{nc}^{axi}$ for $|t|<\e_0$.
\end{lemma}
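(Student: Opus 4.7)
The plan is to realize $u_t$ as the composition $u\circ\Phi_t$ for a $C^1$-diffeomorphism $\Phi_t:\O\to\O$ and then to verify that each defining property of $\mathcal{A}_{nc}^{axi}$ is preserved under this operation. First I would define, in cylindrical coordinates,
\[
\Phi_t(r\cos\theta,\, r\sin\theta,\, z) := \bigl((r+t\varphi_1(r,z))\cos\theta,\,(r+t\varphi_1(r,z))\sin\theta,\, z+t\varphi_2(r,z)\bigr),
\]
so that, by construction, $u_t = u\circ\Phi_t$. Since $\varphi\in C^1_c(\O_0,\R^2)$, the planar map $\Psi_t(X):=X+t\varphi(X)$ is a $C^1$-diffeomorphism of $\O_0$ equal to the identity outside $\supp\varphi$, for every $|t|<\e_0$ with $\e_0$ depending only on $\|D\varphi\|_\infty$; its axisymmetric extension $\Phi_t$ inherits the same properties on $\O$, is the identity outside an axisymmetric compact set of $\O$, and satisfies $\det D\Phi_t>0$.

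Next I would dispatch the routine membership conditions one by one. Axisymmetry of $u_t$ is built into the construction, and the sign condition $v_1^t\geq 0$ a.e.\ follows from $v_1\geq 0$ a.e.\ by the bi-Lipschitz character of $\Psi_t$. The regularity $u_t\in W^{1,2}\cap L^\infty$, the bound $\|u_t\|_\infty\leq M$, and the boundary condition $u_t|_{\p\O}=g|_{\p\O}$ all follow from the chain rule combined with $\Phi_t$ being the identity near $\p\O$. The identity $\det Du_t(x)=\det Du(\Phi_t(x))\det D\Phi_t(x)$ yields $\det Du_t>0$ a.e., since the preimage under the Lipschitz map $\Phi_t$ of a Lebesgue-null set is null; and $u_t$ inherits a.e.\ injectivity from $u$ by composition with a diffeomorphism.

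The heart of the argument is to establish $\mathcal{E}(u_t)=0$, which I would prove by a change of variables. Given a test function $f\in C^\infty_c(\O\times\R^3,\R^3)$ with $\|f\|_\infty\leq 1$, set $\tilde f(y,z):=f(\Phi_t^{-1}(y),z)$; since $\Phi_t^{-1}$ is $C^1$ and the $x$-projection of $\supp f$ is compactly contained in $\O$, the function $\tilde f$ lies in $C^1_c(\O\times\R^3,\R^3)$ with $\|\tilde f\|_\infty\leq 1$. Using $\cof(AB)=(\cof A)(\cof B)$ and the algebraic identity $A^{-1}\det A=(\cof A)^T$ for invertible $A$, and substituting $y=\Phi_t(x)$ in the defining integrals of $\mathcal{E}_u(\tilde f)$, the two terms collapse neatly to those of $\mathcal{E}_{u_t}(f)$, giving
\[
\mathcal{E}_{u_t}(f)=\mathcal{E}_u(\tilde f)=0,
\]
the last equality because $\mathcal{E}(u)=0$ extends from $C^\infty_c$ to $C^1_c$ test functions by a standard mollification that preserves sup-norm bounds. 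Taking the supremum over admissible $f$ then yields $\mathcal{E}(u_t)=0$.

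The main obstacle I anticipate is precisely this change-of-variables identity for $\mathcal{E}$: one has to track the chain-rule factors carefully so that the residual $(D\Phi_t)^{-1}\det D\Phi_t=(\cof D\Phi_t)^T$ glues with $\cof Du\circ\Phi_t$ to produce exactly $\cof Du_t$. The secondary technicalities (compact support of $\tilde f$ inside $\O\times\R^3$, density of $C^\infty_c$ in $C^1_c$ in the norms controlling $\mathcal{E}_u$) are routine.
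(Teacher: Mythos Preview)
Your proposal is correct and follows essentially the same route as the paper: both recognize $u_t=u\circ\Phi_t$ for the axisymmetric diffeomorphism $\Phi_t$ induced by $\Psi_t(X)=X+t\varphi(X)$, verify the elementary membership conditions for $\mathcal{A}_{nc}^{axi}$ directly, and establish $\mathcal{E}(u_t)=0$ by the change of variables $y=\Phi_t(x)$ together with the multiplicativity of $\cof$ and $\det$, reducing $\mathcal{E}_{u_t}(f)$ to $\mathcal{E}_u(\tilde f)$ with $\tilde f(y,z)=f(\Phi_t^{-1}(y),z)$. You are in fact slightly more careful than the paper in flagging the $C^1_c$ versus $C^\infty_c$ issue for $\tilde f$ and in explicitly checking the boundary condition; the paper simply works with $C^1_c$ test functions without comment.
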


\begin{proof}
Let $v_t$ and $u_t$ be as in the statement of the lemma. It is clear that $u_t$ is axially symmetric and $v_t^1\geq 0$ a.e. For $t$ small enough $X+t\varphi(X)$ is a $C^1$-diffeomorphism of $\overline{\O}_0$ thus $v_t$ is one-to-one a.e.\ in $\O_0$ since $v$ is one-to-one a.e.\ in $\O_0$. This also means that $u_t$ is one-to-one a.e.\ in $\O$. We also have that $\|v_t\|_{L^\infty}=\|v\|_{L^\infty}$ and then $\|u_t\|_{L^\infty} \leq M$. \\
It holds that $D v_t(X)=D v(X+t\varphi(X))(I+tD\varphi(X))$ and thus $\det D v_t >0$ a.e.\ in $\O_0$ for $t$ small enough. In particular we have $\det Du_t >0$ a.e.\ for $t$ small enough.\\

We now check that $\mathcal{E}(u_t)=\E(u)=0$. More generally we prove that for every family of diffeomorphisms $\theta_t:\O \rightarrow \O$ with $t$ small enough  and such that $\theta_0=\text{Id}$, we have $\E(u\circ\theta_t)=\E(u)$. This would prove the result since we can write $u_t=u\circ\theta_t$ for the following family of diffeomorphisms
\begin{eqnarray}
\theta_t: \O & \rightarrow & \O \nonumber \\
(r,z,\theta) & \mapsto & (r+t\varphi_1,z+t\varphi_2,\theta) \nonumber
\end{eqnarray}
Now let $f\in C^1_c(\O\times \R^3,\R^3)$,
\begin{eqnarray}
\E_{u\circ \theta_t}(f) &=&\int_\O \langle \cof D(u\circ \theta_t), D_xf(x,u\circ\theta_t(x))\rangle+\det D(u\circ\theta_t)\dive_yf(x,u\circ\theta_t(x))dx \nonumber \\
&=&\int_\O \langle [\cof Du(\theta_t)\cof D\theta_t], D_xf(x,u(\theta_t))\rangle+ \nonumber\\
 & & \phantom{aaaaaaaaaaaaaaaaaaaaaaaaaa} \int_\O \det Du(\theta_t)\det D\theta_t \dive_yf(x,u(\theta_t))dx. \nonumber
\end{eqnarray}
We make the following change of variables: $z=\theta_t(x)$ and we let $f_t(z,y):=f(\theta_t^{-1}(z),y)$. We note that $\det D\theta_t(x) >0$ a.e.\ for $t$ small enough, and we also observe that
\begin{eqnarray}
D_z[f_t(z,y)]&=&D_xf(\theta_t^{-1}(z),y)D\theta_t^{-1}(z) \nonumber \\
&=& D_xf(\theta_t^{-1}(z),y)\frac{\cof D\theta_t^T}{\det D\theta_t}. \nonumber
\end{eqnarray}
We thus obtain that

\begin{eqnarray}
\E_{u\circ\theta_t}(f) &=& \int_\O \langle \cof Du(z), D_xf(\theta_t^{-1}(z),u(z))\rangle+\det Du(z)\dive_yf(\theta_t^{-1}(z),u(z))dz \nonumber \\
&=& \E_u(f_t). \nonumber
\end{eqnarray}
From that it follows that $\E(u\circ\theta_t)=\E(u)=0$.
\end{proof}
It remains to show that for the variations we are considering $t\mapsto E(u_t)$ is differentiable at $t=0$ and we need to compute the derivative. Since $E(u)=2\pi G(v)$ we will show that $t\mapsto G(v_t)$ is differentiable at $t=0$ and compute its derivative. We proceed in two steps, first we deal with the potential term of the energy.

To prove that the term $A(v)= \int_{\O_0} H(\frac{v_1}{r}\det D v) rdrdz$ is differentiable with respect to the type of variations we are considering we first establish an abstract lemma giving conditions under which a functional is differentiable for inner variations and then we check that $A$ satisfies these conditions. The next lemma is very close to Theorem A.1 of \cite{BaumanOwenPhillips}, we only need minor modifications to treat the case where the integrand of the functional is not autonomous (i.e.\ depends on $x$ and also on $u$). Before stating the lemma we need some notations. We let $$\mathcal{B}=\{v:\O_0\rightarrow \R^2; \ u(r\cos\theta,r\sin\theta,z)=v_1(r,z)e_r+v_2(r,z)e_z \in \mathcal{A}_{nc}^{axi} \}.$$ Let $\gamma:\O_0\times \R^2 \times M_2^+(\R) \rightarrow \R$ satisfy:
\begin{equation}\nonumber
\gamma \geq 0 \text{ on } \O_0\times \R^2 \times M_2^+(\R).
\end{equation}
\begin{equation}\nonumber
\gamma \in C^1(\O_0\times \R^2 \times M_2^+(\R)).
\end{equation}
There exist  $\theta>0, N>0$  such that  if  $|X-Y|<\theta$ and  $|C-I|<\theta$ then
\begin{eqnarray}
|D_X\gamma(Y,p,FC)|&\leq& N (1+|F|^2+\gamma(X,p,F)) \text{ and } \nonumber \\
|F^TD_F\gamma(Y,p,FC)| &\leq &N(1+|F|^2+\gamma(X,p,F)). \nonumber
\end{eqnarray}
We let \[W(v)=\int_{\O_0} \gamma(X,v,Dv)dX.\]

\begin{lemma}\label{preconv1}
Let $W,\gamma$ as before and  $v\in \mathcal{B}$ such that $W(v)<+\infty$ . For every $\varphi \in C^1_c(\O_0,\R^2)$, there exists $\e_0>0$ such that

\begin{itemize}
\item[i)] $v_t(X):=v(X+t\varphi(X))$ is in $\mathcal{B}$ for $|t|<\e_0$,
\item[ii)] $\frac{d}{dt}_{|t=0} W(v_t)$ exists and is equal to
\begin{eqnarray}\label{formuladerive}\nonumber
\frac{d}{dt}_{|t=0} W(v_t)&=&- \int_{\O_0} D_X\gamma(X,v,Dv)\cdot\varphi \\
&+&\langle Dv^TD_F\gamma(X,v,Dv),D\varphi\rangle-\gamma(X,v,Dv)\tr D\varphi dX
\end{eqnarray}
\end{itemize}
\end{lemma}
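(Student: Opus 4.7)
The plan is to model the proof on Theorem A.1 of \cite{BaumanOwenPhillips}, with the straightforward modification needed to handle the explicit $X$-dependence in the integrand. Part (i) requires no new work: it is precisely Lemma~\ref{appartenance} (the membership in $\mathcal{B}$ of $v_t$ translates directly into the membership of the corresponding $u_t$ in $\mathcal{A}_{nc}^{axi}$, since all the conditions defining $\mathcal{A}_{nc}^{axi}$ are formulated through the cylindrical expression of the deformation).

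For part (ii), I would start by performing the change of variables $Y = X + t\varphi(X)$ in the integral $W(v_t)$. Writing $\Psi_t(X) := X + t\varphi(X)$, which is a $C^1$-diffeomorphism of $\overline{\Omega}_0$ for $|t| < \varepsilon_0$ (with $\varepsilon_0$ possibly smaller than in Lemma~\ref{appartenance}), and letting $X_t := \Psi_t^{-1}$, $B_t(Y) := I + tD\varphi(X_t(Y))$, $J_t(Y) := \det DX_t(Y)$, one obtains
\begin{equation*}
W(v_t) = \int_{\Omega_0} \gamma\bigl(X_t(Y),\, v(Y),\, Dv(Y) B_t(Y)\bigr) J_t(Y) \, dY.
\end{equation*}
At $t=0$ the arguments reduce to $(Y, v(Y), Dv(Y))$ and $J_0 \equiv 1$, while the $t$-derivatives at $t=0$ of $X_t, B_t, J_t$ are respectively $-\varphi, D\varphi, -\tr D\varphi$ (the first from implicitly differentiating $Y = X_t(Y) + t\varphi(X_t(Y))$; the last from Jacobi's formula applied to $B_t^{-1}$).

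The main step, and the main obstacle, is to justify differentiation under the integral. I would write the difference quotient as
\begin{equation*}
\frac{W(v_t) - W(v)}{t} = \int_{\Omega_0} \int_0^1 \frac{d}{ds}\Bigl[\gamma\bigl(X_{st}, v, Dv\, B_{st}\bigr) J_{st}\Bigr]\, ds\, dY
\end{equation*}
and dominate the integrand uniformly in $s \in [0,1]$ and $|t|<\varepsilon_0$. Since $\varphi \in C^1_c(\Omega_0,\mathbb R^2)$, one has $|X_{st}(Y) - Y| \leq |t|\,\|\varphi\|_\infty < \theta$ and $|B_{st}(Y) - I| \leq |t|\,\|D\varphi\|_\infty < \theta$ uniformly in $(s,Y)$ for $|t|$ sufficiently small. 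This is exactly the regime in which the hypotheses on $\gamma$ give the pointwise bounds
\begin{equation*}
|D_X\gamma(X_{st}, v, Dv\, B_{st})| + |(Dv\, B_{st})^T D_F\gamma(X_{st}, v, Dv\, B_{st})| \leq N\bigl(1 + |Dv|^2 + \gamma(Y, v, Dv)\bigr);
\end{equation*}
combined with the smoothness and boundedness of $X_{st}, B_{st}, J_{st}$ and their $s$-derivatives, this yields an $s$-uniform domination of the full integrand by $C\bigl(1 + |Dv(Y)|^2 + \gamma(Y, v, Dv)\bigr)$, which lies in $L^1(\Omega_0)$ because $W(v) < \infty$ and, for $v \in \mathcal{B}$ (which inherits $W^{1,2}$ of $u$), $|Dv|^2$ is integrable on compact sets bounded away from the axis, where the test function $\varphi$ is supported.

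Once the dominating function is in place, I would apply Fubini and the Lebesgue dominated convergence theorem to pass $t \to 0$. Evaluating the $s$-derivative at $s=0$ and using the values of $\partial_s X_{st}, \partial_s B_{st}, \partial_s J_{st}$ computed above, the three contributions assemble exactly into \eqref{formuladerive}. The hard point really is the uniform domination on the segment $[0,t]$: this is where the hypothesis that the growth bounds hold not only at $(X, F)$ but on a $\theta$-neighborhood $(Y, FC)$ with $|Y-X|, |C-I|<\theta$ becomes indispensable, and it is the sole place where the abstract proof deviates from a purely formal differentiation under the integral sign.
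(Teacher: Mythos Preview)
Your proposal is correct and follows essentially the same route as the paper: change variables $Y=X+t\varphi(X)$, use the growth hypotheses on $\gamma$ in the $\theta$-neighborhood to obtain an $L^1$ dominating function of the form $C(1+|Dv|^2+\gamma(Y,v,Dv))$, and apply dominated convergence. The only organizational difference is that the paper first splits off the Jacobian factor $\det(I+tD\varphi)^{-1}$ (handled by uniform convergence) before invoking the mean-value integral to bound the remaining increment, whereas you keep everything inside the single $\int_0^1 \frac{d}{ds}[\,\cdot\,]\,ds$ representation; both lead to the same domination. One small slip: the quantity you need to bound via the hypothesis is $|Dv^T D_F\gamma(X_{st},v,Dv\,B_{st})|$ (i.e.\ $|F^T D_F\gamma(Y,p,FC)|$ with $F=Dv$), not $|(Dv\,B_{st})^T D_F\gamma(\cdot)|$, though since $|B_{st}|$ is uniformly bounded this does not affect the argument.
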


\begin{proof}
We prove that we can pass to the limit in the quotient $\frac{1}{t}(W(v_t)-W(v))$ by using the dominated convergence. We have
\begin{eqnarray}
\frac{1}{t}(W(v_t)-W(v)) = \frac{1}{t}\int_{\O_0}[\gamma(X,v_t,Dv_t)-\gamma(X,v,Dv)]dX \nonumber \\
 = \frac{1}{t}\left[\int_{\O_0} \gamma(Y-t\varphi,v(y),Dv(Y)(I+tD\varphi))\det (I+tD\varphi)^{-1}dY-\gamma(Y,v,Dv)dY \right] \nonumber \\
 =\int_{\O_0}\frac{1}{t}\left[ \gamma(Y-t\varphi,v,Dv(Y)(I+tD\varphi))-\gamma(Y,v,Dv)\right]\det(I+tD\varphi)^{-1}dY \nonumber \\
   \phantom{aaaa} +\int_{\O_0} \gamma(Y,v,Dv)\frac{1}{t}(\det(I+tD\varphi)^{-1}-1)dY \nonumber
\end{eqnarray}
We claim that, for $t$ small enough, we have
\begin{equation}\label{preconvdom}
|\gamma(Y-t\varphi(X),v,Dv(Y)(I+tD\varphi(x))-\gamma(Y,v,Dv)|\leq Nt[1+|Dv|^2+\gamma(Y,v,Dv)].
\end{equation}
We can then apply the dominated convergence theorem for the first term of the r.h.s of the last equality since for $t$ small enough we have $\frac{1}{4}\leq \det(I+tD\varphi)^{-1} \leq 4$ and that
\begin{eqnarray}
\lim_{t\rightarrow 0} \frac{1}{t} \left[\gamma(Y-t\varphi,v,Dv(Y)(I+tD\varphi))-\gamma(Y,v,Dv)\right]= -D_Y\gamma(Y,v,Dv)\cdot\varphi(Y) \nonumber\\
\phantom{aaaaaaaaaaaaaa} + \langle Dv^TD_F\gamma(Y,v,Dv),D\varphi(Y)\rangle. \nonumber \end{eqnarray}

For the second term we note that $\frac{1}{t}(\det(I+tD\varphi)^{-1}-1)$ converges uniformly for $Y\in \O_0$ to $-\tr D\varphi(Y)$. We now prove \eqref{preconvdom}. It suffices to check that
\[|\gamma(Y,p,FC)-\gamma(X,p,F)|\leq N\theta [1+|F|^2+\gamma(X,p,F)] \]
for $|X-Y|<\theta$ and $|C-I|<\theta$. But
\begin{eqnarray}\nonumber
\gamma(Y,p,FC)-\gamma(X,p,F) =\int_0^1 \frac{d}{dt} \gamma\left(Y(t),p,FC(t)\right)dt
\end{eqnarray}
with $Y(t)=(1-t)X+tY$ and $C(t)=(1-t)I+tC$. We note that $|Y(t)-X|<\theta$ and $|C(t)-I|<\theta$. We also have
\begin{eqnarray}
\frac{d}{dt} \gamma(Y(t),p,FC(t))&=&D_X\gamma(Y(t),FC(t))\cdot Y'(t)+\sum_{i,j,k=1}^2 \frac{\p \gamma}{\p F_{ij}}(FC(t)).F_{ik}(C-I)_{kj} \nonumber \\
&=& D_X\gamma(Y(t),p,FC(t))\cdot(Y-X) \nonumber \\
& & \phantom{aaaaaaaaa} +\sum_{j,k}^2[F^T D_F\gamma(Y(t),p,FC(t))]_{kj}(C-I)_{kj} \nonumber
\end{eqnarray}
Thus
\begin{eqnarray}
|\frac{d}{dt} \gamma(Y(t),p,FC(t))| & \leq & |D_X\gamma(Y(t),p,FC(t))||Y-X| \nonumber \\
& &+ \phantom{} |F^TD_F\gamma(Y(t),p,FC(t))||C-I| \nonumber \\
&\leq & N(1+|F|^2+\gamma(X,p,F))\theta. \nonumber
\end{eqnarray}
This concludes the proof.
\end{proof}
We now show that $A(v)= \int_{\O_0} H\left(\frac{v_1}{r}\det D v\right) rdrdz$ satisfies the hypothesis of the previous lemma if $\O$ is an axisymmetric domain such that $\inf_{(x,y,z)\in \O}\sqrt{x^2+y^2}>0$.

\begin{lemma}\label{preconv2}
Let $\O$ be a smooth axisymmetric bounded domain such that \\
 $\inf_{(x,y,z)\in \O}\sqrt{x^2+y^2}>0$. Then  $A(v)= \int_{\O_0} H(\frac{v_1}{r}\det D v) rdrdz$ satisfies the hypotheses of Lemma \ref{preconv1}. Hence for each $\varphi\in C_c^\infty(\O_0,\R^2)$ if we set $v_t(X)=v(X+t\varphi(X))$ then $A(v_t)$ is differentiable at $t=0$ and
\begin{eqnarray}\label{formulaforphi}
\frac{d}{dt}_{|t=0}A(v_t)= \int_{\O_0}-\left[ \frac{-v_1}{r}\det D v H'(\frac{v_1}{r}\det Dv)+H(\frac{v_1}{r}\det D v)\right]\varphi_1drdz + \nonumber \\
\int_{\O_0}\left[H'(\frac{v_1}{r}\det D v)v_1\det D v -H(\frac{v_1}{r}\det D v)r\right]\tr (D\varphi) drdz.
\end{eqnarray}
\end{lemma}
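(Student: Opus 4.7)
The plan is to apply Lemma \ref{preconv1} with the integrand
\[
\gamma(X, p, F) \;=\; r\, H\!\left(\tfrac{p_1}{r}\det F\right), \qquad X = (r, z),
\]
so that $A(v) = \int_{\O_0} \gamma(X, v(X), Dv(X))\, dX$. The hypothesis $\inf_{(x,y,z)\in\O}\sqrt{x^2+y^2} > 0$ supplies $r_0 > 0$ with $r \geq r_0$ on $\overline{\O}_0$, which makes the factor $1/r$ harmless. Together with $H \in C^1$ on $(0,\infty)$, this gives $\gamma \geq 0$ and $\gamma \in C^1$ on the effective domain where it is finite (i.e.\ for $p_1 \det F > 0$, which is precisely where it will be evaluated since $v \in \mathcal{B}$).

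The main task is to verify the two growth bounds on $D_X\gamma$ and $F^T D_F\gamma$. Setting $t := \tfrac{p_1}{r}\det F$, a direct computation gives
\[
D_r \gamma \;=\; H(t) - tH'(t), \qquad D_z\gamma \;=\; 0, \qquad F^T D_F\gamma \;=\; p_1 H'(t) \det F \cdot I \;=\; r\, tH'(t)\, I,
\]
so both quantities are governed by $|H(t)| + |tH'(t)|$ (and the bounded factor $r$). The central pointwise inequality is
\[
|tH'(t)| \;\leq\; C\bigl(1 + H(t)\bigr) \quad \text{for all } t > 0,
\]
which I would derive by splitting into the three regimes determined by the structural hypotheses on $H$. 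For $t < d_0$, the bound \eqref{hypothesisH1} yields $|tH'(t)| \leq c_2 t^{-s} \leq (c_2/c_1) H(t)$. For $t \geq d_1$, integrating $c_3 t^{\tau+1} \leq H'(t)$ gives $H(t) \geq c\, t^{\tau+2} - C'$, while $H'(t) \leq c_4 t^{\tau+1}$ gives $|tH'(t)| \leq c_4 t^{\tau+2}$; the two combine to give the desired control. On $[d_0, d_1]$, continuity of $H$ and $H'$ provides a uniform bound.

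To obtain the perturbed bound actually required by Lemma \ref{preconv1}, I would choose $\theta > 0$ small enough that, whenever $|Y - X| < \theta$ and $|C - I| < \theta$, both $\det C$ and $r/r_Y$ lie in $[\tfrac12, 2]$. Writing $s := \tfrac{p_1}{r_Y}\det(FC) = \tfrac{r \det C}{r_Y}\, t$, we get $s/t$ in a fixed compact subset of $(0, \infty)$. The power-type behaviour of $H$ near $0$ and near $\infty$ then propagates the pointwise bound to $H(s) + |sH'(s)| \leq C(1 + H(t))$ uniformly. Combined with $\gamma(X, p, F) = r H(t) \geq r_0 H(t)$, this yields $|D_X\gamma(Y, p, FC)| \leq N(1 + \gamma(X, p, F))$. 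For the second bound, the identity $|p_1 \det F| = r t / \det C \lesssim r_Y |s|$ converts $|F^T D_F\gamma(Y,p,FC)| \lesssim r_Y |sH'(s)|$ into $N(1 + \gamma(X,p,F))$, uniformly in $p$. This part — maintaining uniformity in $p$ while passing from $t$ to the perturbed argument $s$ — is where I expect the calculation to require the most bookkeeping, and it is essentially the only non-routine step.

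Once the hypotheses of Lemma \ref{preconv1} are verified, its formula \eqref{formuladerive} applies directly. Substituting $D_X\gamma \cdot \varphi = (H(t) - tH'(t))\varphi_1$, and $\langle Dv^T D_F\gamma, D\varphi\rangle = v_1 H'(t) \det(Dv)\, \tr D\varphi$ (from $F^T D_F\gamma = r t H'(t) I$ evaluated at $F = Dv$, using $rt = v_1 \det Dv$), and $\gamma = rH(t)$, and collecting terms produces exactly the formula \eqref{formulaforphi}.
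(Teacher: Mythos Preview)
Your proposal is correct and follows the same strategy as the paper: set $\gamma(X,p,F)=rH(\tfrac{p_1}{r}\det F)$ and verify the growth hypotheses of Lemma~\ref{preconv1} using the power-type assumptions on $H$ near $0$ and $\infty$, with the condition $\inf_{\O}\sqrt{x^2+y^2}>0$ ensuring $r$ stays in a compact subinterval of $(0,\infty)$. Your organization around the single pointwise inequality $|tH'(t)|\leq C(1+H(t))$, plus the observation that the perturbed argument $s=\tfrac{r\det C}{r_Y}t$ satisfies $s/t\in[\tfrac12,2]$, is a slight streamlining of the paper's argument: the paper instead expands $p_1cd\,|H'(\tfrac{p_1cd}{r'})|$ into three pieces and absorbs the linear-in-$d$ contribution via $\det F\leq |F|^2$ and the bound $0\leq p_1\leq M$, whereas your route avoids both of those and lands directly on a bound of the form $N(1+\gamma)$. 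The derivative computation at the end is identical to the paper's.
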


\begin{proof}
Let $\gamma(X,p,F)= H(\frac{p_1}{r}\det F)r$. A direct computation shows that $$F^TD_F\gamma(X,p,F)=(\det F) H'(\frac{p_1}{r}\det F)p_1I.$$ Thus
\begin{eqnarray}
|F^TD_F\gamma(Y,p,FC)|\leq (\det F) (\det C) H'(\frac{p_1}{r'}\det F \det C)p_1 \nonumber
\end{eqnarray}
where we let $Y=(r',z')$. We also let $d:=\det F$ and $c:=\det C$. We want to prove that
\begin{equation}\nonumber
p_1cd|H'(\frac{p_1cd}{r})| \leq M_1(1+d+r H(\frac{p_1}{r}d))
\end{equation}
for some $M_1>0$, for $r,d>0$, for $0\leq p_1\leq M$ (recall that $M$ is a positive real number such that $\|u\|_{L^\infty}\leq M$ if $u$ is in $\mathcal{A}$) and $\frac{1}{4}\leq c\leq 4$. \\

This will prove the result since $\det F \leq |F|^2$ in two dimensions. We use the hypothesis on $H$ to obtain that there exists $M_2>0$ such that for all $r,d>0$ and $ \frac{1}{4}\leq c \leq 4$ we have
\begin{eqnarray}
|H'(\frac{p_1cd}{r'})|  &\leq & M_2(1+ (\frac{p_1cd}{r'})^{-s-1}+(\frac{p_1cd}{r'})^{\tau+1}) \nonumber \\
\end{eqnarray}
Hence using that $0\leq p_1 \leq M$ and that $\frac{1}{4}\leq c\leq 4$ we obtain that, for some $M_2>0$
\begin{equation}\nonumber
p_1cd|H'(\frac{p_1cd}{r'})|  \leq M_2(d+(p_1d)^{-s}{r'}^{s+1}+(p_1)d^{\tau+2}{r'}^{-\tau+1})
\end{equation}
However since $|r-r'|<\theta$ and since there exist $0<r_0<R_0$ such that $r_0<r,r'<R_0$, we can apply the mean value theorem to get that
\begin{eqnarray}
|r^{-\tau-1}-{r'}^{-\tau-1}|&\leq&  M_3|r-r'|\leq M_3 \theta \nonumber \\
|r^{s+1}-{r'}^{s+1}| & \leq & M_3 |r-r'| \leq M_3 \theta, \nonumber
\end{eqnarray}
for some $M_3>0$. We use again that $r_0<r,r'<R_0$ and we can deduce that
\begin{equation}\nonumber
p_1cd|H'(\frac{p_1cd}{r'})|  \leq M_4\left(d+(p_1d)^{-s}{r}^{s+1}+(p_1)d^{\tau+2}{r}^{-\tau+1}\right)
\end{equation}
Now we use the hypothesis on $H$ but this time to obtain a lower bound and we have that, for some $M_5>0$
\begin{eqnarray}
r H(\frac{p_1d}{r}) & \geq & M_5 r\left(1+ (\frac{p_1d}{r})^{-s}+(\frac{p_1d}{r})^{\tau+2} \right) \nonumber\\
& \geq & M_2\left(1 +(p_1d)^{-s}{r}^{s+1}+(p_1)d^{\tau+2}{r}^{-\tau+1} \right).
\end{eqnarray}
This proves that
\[ |F^TD_F\gamma(Y,p,CF)| \leq C[1+|F|^2+\gamma(X,p,F)] .\]
We proceed in the same way to prove that $|D_X\gamma(Y,p,FC)|\leq N(1+|F|^2+\gamma(X,p,F))$ for $|X-Y|<\theta$ and $|C-I|<\theta$ with $\theta$ small enough, the key ingredient being that there exist $0<r_0<R_0$ such that $r_0<r<R_0$. Once we have that $A(v)$ satisfies the hypothesis of Lemma \ref{preconv1} then to compute its derivative at $t=0$ for variations $v_t$ as before we use the formula in that lemma.
\end{proof}
We now deal with the Dirichlet part of the energy.
\begin{lemma}\label{formulaforDir}
Let $F(v):=\int_{\O_0} |D v|^2rdrdz+\int_{\O_0} \frac{v_1^2}{r}drdz$ defined for $v:\O_0 \rightarrow \R^2$. For all $\varphi\in C^1_c(\O_0,\R^2)$ we set $v_t(X)=v(X+t\varphi(X))$. Then $\frac{d}{dt}_{|t=0}F(v_t)$ exists and
\begin{equation}
\frac{d}{dt}_{|t=0}F(v_t)=\int_{\O_0} \langle 2D v^T D v-(|D v|^2+\frac{v_1^2}{r^2})I,D\varphi(X)\rangle rdrdz -\int_{\O_0} \left( |D v|^2-\frac{v_1^2}{r^2} \right)\varphi_1drdz
\end{equation}
\end{lemma}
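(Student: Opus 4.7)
The plan is to apply the abstract inner-variation formula of Lemma \ref{preconv1} with the integrand $\gamma(X,p,F) := |F|^2 r + \frac{p_1^2}{r}$, where $X=(r,z)$, so that $F(v) = \int_{\O_0} \gamma(X, v(X), Dv(X))\,dX$. The first task is to verify the growth hypotheses of Lemma \ref{preconv1}. Here the standing assumption $\inf_{(x,y,z)\in\O}\sqrt{x^2+y^2}>0$ is essential: it provides $0<r_0\leq r\leq R_0$ on $\O_0$, which together with the uniform bound $|p_1|\leq \|v\|_\infty\leq M$ valid on $\mathcal{B}$ removes all potential singularities.

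A direct computation gives $D_F\gamma(X,p,F)=2rF$, $F^T D_F\gamma(X,p,F)=2rF^T F$, and $D_X\gamma(X,p,F)=\bigl(|F|^2-\frac{p_1^2}{r^2},\,0\bigr)$. For $|Y-X|<\theta$ and $|C-I|<\theta$ with $\theta$ sufficiently small, we keep $r'=r(Y)\in[r_0/2,2R_0]$ and $|C|\leq 2$, so
\begin{equation}\nonumber
|F^T D_F\gamma(Y,p,FC)|\leq 2r'|F|^2|C|\leq N_1(1+|F|^2+\gamma(X,p,F)),
\end{equation}
and similarly $|D_X\gamma(Y,p,FC)|\leq |FC|^2+M^2/r_0^2\leq N_2(1+|F|^2+\gamma(X,p,F))$. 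Lemma \ref{appartenance} ensures that $v_t\in\mathcal{B}$ for $|t|$ small, so Lemma \ref{preconv1} applies and yields
\begin{equation}\nonumber
\frac{d}{dt}_{|t=0}F(v_t)=\int_{\O_0}\bigl[-D_X\gamma\cdot\varphi+\langle Dv^T D_F\gamma,D\varphi\rangle-\gamma\,\tr D\varphi\bigr]\,dX.
\end{equation}

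Substituting $D_X\gamma\cdot\varphi=(|Dv|^2-\frac{v_1^2}{r^2})\varphi_1$, $Dv^T D_F\gamma = 2r\,Dv^T Dv$, $\gamma=(|Dv|^2+\frac{v_1^2}{r^2})r$, and rewriting $r\,\tr D\varphi=\langle rI,D\varphi\rangle$, one collects the $D\varphi$-terms to get
\begin{equation}\nonumber
\frac{d}{dt}_{|t=0}F(v_t)=\int_{\O_0}\Bigl\langle 2Dv^T Dv-\bigl(|Dv|^2+\tfrac{v_1^2}{r^2}\bigr)I,\,D\varphi\Bigr\rangle r\,drdz-\int_{\O_0}\bigl(|Dv|^2-\tfrac{v_1^2}{r^2}\bigr)\varphi_1\,drdz,
\end{equation}
which is the stated identity.

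The proof is essentially book-keeping, and the only obstacle worth flagging is ensuring the $\frac{v_1^2}{r}$ contribution, whose $X$-dependence features the singular factor $1/r$, is handled correctly: this is precisely what the hypothesis $r\geq r_0>0$ accomplishes. In contrast to Lemma \ref{preconv2}, no refined bounds on $H$ are needed here since $|F|^2 r$ has polynomial growth in $F$ and $\frac{p_1^2}{r}$ is uniformly bounded on $\mathcal{B}$.
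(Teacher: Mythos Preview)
Your proof is correct. The paper's own proof is only a two-line sketch: change variables $\tilde X = X+t\varphi(X)$, apply dominated convergence to the difference quotient, and read off the derivative from the power series in $t$; it explicitly says to ``compare with the proof of Lemma~\ref{preconv1}'' and leaves the details to the reader. You take the complementary route and simply \emph{apply} Lemma~\ref{preconv1} as a black box after verifying its hypotheses for $\gamma(X,p,F)=r|F|^2+p_1^2/r$. Since the proof of Lemma~\ref{preconv1} is precisely the change-of-variables/dominated-convergence argument the paper alludes to, the two approaches are the same computation packaged differently; your version has the advantage of being fully written out, at the modest cost of checking growth bounds that a direct expansion of the polynomial integrand would make superfluous.
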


\begin{proof}
To prove the differentiability of $F(v_t)$ we prove that we can pass to the limit in the quotient $\frac{1}{t}(F(v_t)-F(v))$ by first changing variables to $\tilde{X}=X+t\varphi(X)$ and then applying the Lebesgue dominated convergence. Then we compute the derivative using power series expansion in $t$. The details are left to the reader (compare with the proof of Lemma \ref{preconv1}).
\end{proof}
We conclude the proof of Theorem \ref{Equations}.
\begin{proof}[Proof of Theorem \ref{Equations}]
Thanks to Lemmas \ref{appartenance}, \ref{formulaforDir}, \ref{preconv1} and \ref{preconv2} we can see that for each $\varphi\in C^\infty_c(\O_0,\R^2)$, $G(v_t)$ is differentiable at $t=0$ for $v_t=v(X+t\varphi(X))$ and
\begin{eqnarray}\label{derivativeofG}
\frac{d}{dt}_{|t=0} G(v_t)= \int_{\O_0} \left[\langle 2D v^T D v-(|D v|^2+\frac{v_1^2}{r^2})I,D\varphi(X)\rangle r- \left(|D v|^2-\frac{v_1^2}{r^2} \right)\varphi_1\right]drdz \nonumber \\
+\int_{\O_0}\left[ \frac{v_1}{r}\det D v H'(\frac{v_1}{r}\det D v)-H(\frac{v_1}{r}\det D v)\right]\varphi_1drdz  \nonumber \\
+\int_{\O_0}\left[ \left(H'(\frac{v_1}{r}\det D v)v_1\det D v -H(\frac{v_1}{r}\det D v)r\right)\tr D\varphi\right]rdrdz. \nonumber
\end{eqnarray}
Since $u_{t=0}=u$ minimizes the energy $E$ in $\mathcal{A}_{nc}^{axi}$ we have that $\frac{d}{dt}_{|t=0} G(v_t)=0$. Thus for all $\varphi\in C^\infty_c(\O_0,\R^2)$ we have that the right hand side of \eqref{derivativeofG} must be zero and this is equivalent to equations \eqref{equation1} and \eqref{equation2}. To show that these equations are equivalent to
\[\Dive\left(2Du^TDu+[H'(\det Du)\det Du-|Du|^2-H(\det Du)]I\right)=0 \]
in the sense of distributions we write this last equation in cylindrical coordinates. For that we use the expression of $Du$ in the basis $(e_r,e_\theta,e_z)$ (cf. Appendix)
\[Du=\begin{pmatrix}
\p_r v_1 &0 & \p_z v_1 \\
0 & \frac{v_1}{r} & 0 \\
\p_r v_2 & 0 & \p_zv_2
\end{pmatrix}.\]
Thus
\[Du^TDu =\begin{pmatrix}
|\p_rv|^2 & 0& \p_rv \cdot \p_zv \\
0 & \frac{v_1^2}{r^2} & 0 \\
\p_zv \cdot \p_rv & 0 & |\p_zv|^2
\end{pmatrix}\]
and since $|Du|^2=|Dv|^2+\frac{v_1^2}{r}=|\p_rv|^2+|\p_zv|^2+\frac{v_1^2}{r}$ we find that the energy-momentum tensor
\[T_u:= 2Du^TDu+[H'(\det Du)\det Du-|Du|^2-H(\det Du)]I \]
can be expressed in cylindrical coordinates as
\begin{eqnarray}
T_u= \begin{pmatrix}
|\p_rv|^2-|\p_zv|^2-\frac{v_1^2}{r^2} & 0 & 2\p_rv \cdot \p_z v \\
0 & -|Dv|^2+\frac{v_1^2}{r^2} & 0 \\
2\p_rv \cdot \p_zv & 0 & |\p_zv|^2-|\p_rv|^2-\frac{v_1^2}{r^2}
\end{pmatrix} \nonumber \\
+\left[ H'(\frac{v_1}{r}\det D v)\frac{v_1}{r}\det D v-H(\frac{v_1}{r}\det D v)\right]I.
\end{eqnarray}
By using that if $f$ is vector-valued and if $a,b$ are vector valued then $\Dive(f a\otimes b)=a\otimes b. \nabla \varphi+\varphi D a.b +(\Dive b) a)$ we obtain that for a tensor $A$ we have
\begin{equation}
\Dive A= \begin{pmatrix}
\p_r A_{rr} +\frac{1}{r}\p_\theta A_{r\theta}+\p_zA_{rz}+\frac{A_{rr}-A_{\theta \theta}}{r} \\
\p_r A_{\theta r}+\frac{1}{r}\p_\theta A_{\theta \theta}+\p_z A_{\theta z}+\frac{A_{r \theta}+A_{\theta r}}{r} \\
\p_r A_{zr}+\frac{1}{r}\p_\theta A_{z\theta}+\p_z A_{zz}+\frac{A_{zr}}{r}
\end{pmatrix}.
\end{equation} We thus find that $\dive T_u=0$ corresponds to \eqref{equation1} and \eqref{equation2}.
\end{proof}

\textbf{Acknowledgements:} The authors have been supported by the Millennium Nucleus Center for Analysis of PDE NC130017 of the Chilean Ministry of Economy. D.H. has also been funded by FONDECYT project $\#1150038$ of the Chilean Ministry of Education. We are very grateful for several stimulating discussions with J.M. Ball, P. Bauman, F. Bethuel, J. D\'avila, M. del Pino, R. Jerrard, J. Kristensen, F. Murat, D. Phillips, C. Wang and A. Zarnescu, which greatly helped to shape the research being presented.

\section*{Appendix}

In this appendix we give the expressions of various quantities of interest in cylindrical coordinates. Let $u$ be an axisymmetric map $u(r\cos \theta ,r\sin \theta, z) =v_1(r,z)e_r +v_2(r,z)e_z$, for some $v:\O_0 \rightarrow \R^2$ with $v_1 \geq 0$. We begin with the expression of the Jacobian matrix in cylindrical coordinates (i.e.\ in the basis $(e_r,e_\theta,e_z)$). To derive this expression we use that, by definition, for any $C^1$ curve $\gamma(t)$ we have that
\[\frac{d}{dt} \left[u(\gamma(t))\right]=Du(\gamma(t)).\gamma'(t).\]
We thus consider a $C^1$ curve $\gamma(t)=(r(t),\theta(t),z(t))$ and we compute:
\begin{eqnarray}
\frac{d}{dt} \left[u(\gamma(t))\right] &=& \frac{d}{dt}[v_1(r,z)e_r+v_2(r,z)e_z] \nonumber \\
&=& (\p_rv_1(r,z)\dot{r}+\p_zv_1(r,z)\dot{z})e_r+v_1(r,z)\dot{\theta}e_\theta+ \frac{d}{dt}[v_2(r,z)]e_z \nonumber \\
&= & \left( \p_rv_1 e_r \cdot \dot{\gamma}+\p_zv_1 e_z \cdot \dot{\gamma} \right)e_r +v_1e_\theta\frac{1}{r} e_\theta\cdot \dot{\gamma} + \nonumber \\
& &\phantom{aaaaaaaaaaaaaaaaaaaaaaaaaaaa} \left( \p_rv_2 e_r\cdot \dot{\gamma} +\p_zv_2 e_z\cdot \dot{\gamma} \right) e_z \nonumber
\end{eqnarray}
Now we use that, by definition of the tensorial product of two vectors we have \\
$a \otimes b \cdot h= (b\cdot h)a$. Hence
\begin{equation}
Du=\p_r v_1 e_r \otimes e_r +\p_z v_1 e_r\otimes e_z + \frac{v_1}{r} e_\theta \otimes e_\theta +\p_rv_2 e_z \otimes e_r+ \p_zv_2 e_z \otimes e_z.
\end{equation}
In other words
\begin{equation}
Du= \begin{pmatrix}
\p_rv_1 & 0 & \p_z v_1 \\
0 & \frac{v_1}{r} & 0 \\
\p_rv_2 & 0 & \p_zv_2
\end{pmatrix}.
\end{equation}
We can now deduce that
\begin{equation}
\det Du = \frac{1}{r} v_1 \det D v,
\end{equation}
\begin{equation}
\cof Du = \begin{pmatrix}
\frac{v_1}{r}\p_zv_2 & 0 & -\frac{v_1}{r}\p_rv_2 \\
0 & \det D v & 0 \\
-\frac{v_1}{r}\p_zv_1 & 0 & \frac{v_1}{r}\p_rv_1
\end{pmatrix}
\end{equation}

\begin{equation}
\mathcal{D}(u)=\cof Du^T.u=\frac{v_1}{r}(v\wedge \p_zv, 0 ,-v\wedge \p_rv).
\end{equation}
In conclusion,
\begin{eqnarray}
E(u)&=&2\pi G(v) \nonumber \\
&=& 2\pi \int_{\O_0}\left[\left(|\p_rv|^2+|\p_z v|^2\right)r+\frac{v_1^2}{r}+H(\frac{v_1}{r}\det D v)r \right]drdz. \nonumber
\end{eqnarray}
\bibliographystyle{plain}
\bibliography{Neohookean}
\end{document}